\theoremstyle{plain}
\newtheorem{teo}{Theorem}[section]
\newtheorem{lemma}[teo]{Lemma}
\newtheorem{pro}[teo]{Proposition}
\theoremstyle{defi}
\newtheorem{defi}[teo]{Definition}
\theoremstyle{remark}
\newtheorem{rem}[teo]{Remark}
\newcommand{\diag}{\operatorname{diag}}
\newcommand{\N}{\mathbb{N}}
\renewcommand{\intercal}{{\mathsf{\scriptscriptstyle T}}}
\DeclareRobustCommand{\gaussk}{\DOTSB\gaussk@\slimits@}
\newcommand{\gaussk@}{\mathop{\vphantom{\sum}\mathpalette\bigcal@{K}}}
\newcommand{\bigcal@}[2]{%
	\vcenter{\m@th
		\sbox\z@{$#1\sum$}%
		\dimen@=\dimexpr\ht\z@+\dp\z@
		\hbox{\resizebox{!}{0.8\dimen@}{$\mathcal{K}$}}%
	}%
}
\newcommand{\cfracplus}{\mathbin{\cfracplus@}}
\newcommand{\cfracplus@}{%
	\sbox\z@{$\dfrac{1}{1}$}%
	\sbox\tw@{$+$}%
	\raisebox{\dimexpr\dp\tw@-\dp\z@\relax}{$+$}%
}
\newcommand{\cfracdots}{\mathord{\cfracdots@}}
\newcommand{\cfracdots@}{%
	\sbox\z@{$\dfrac{1}{1}$}%
	\sbox\tw@{$+$}%
	\raisebox{\dimexpr\dp\tw@-\dp\z@\relax}{$\cdots$}%
}
\newcommand*{\relrelbarsep}{.386ex}
\newcommand*{\relrelbar}{%
	\mathrel{%
		\mathpalette\@relrelbar\relrelbarsep
	}%
}
\newcommand*{\@relrelbar}[2]{%
	\raise#2\hbox to 0pt{$\m@th#1\relbar$\hss}%
	\lower#2\hbox{$\m@th#1\relbar$}%
}
\providecommand*{\rightrightarrowsfill@}{%
	\arrowfill@\relrelbar\relrelbar\rightrightarrows
}
\providecommand*{\leftleftarrowsfill@}{%
	\arrowfill@\leftleftarrows\relrelbar\relrelbar
}
\providecommand*{\xrightrightarrows}[2][]{%
	\ext@arrow 0359\rightrightarrowsfill@{#1}{#2}%
}
\providecommand*{\xleftleftarrows}[2][]{%
	\ext@arrow 3095\leftleftarrowsfill@{#1}{#2}%
}
\begin{document}
\title[Discrete multiple orthogonality]{Toda  and Laguerre--Freud equations for multiple discrete  orthogonal polynomials with an arbitrary number of weights}

%
\author[I Fernández-Irisarri]{Itsaso Fernández-Irisarri$^1$}
\email{$^1$itsasofe@ucm.es}
\address{$^{1,2}$Departamento de Física Teórica, Universidad Complutense de Madrid, Plaza Ciencias 1, 28040-Madrid, Spain}

\author[M Mañas]{Manuel Mañas$^2$}
\email{$^2$manuel.manas@ucm.es}


\begin{abstract}
In this paper, we extend our investigation into semiclassical multiple discrete orthogonal polynomials by considering an arbitrary number of weights. We derive multiple versions of the Toda equations and the Laguerre-Freud equations for the multiple generalized Charlier and multiple generalized Meixner II families. 
\end{abstract}

\subjclass{42C05,33C45,33C47,35C05,37K10}

\keywords{Pearson equations,  multiple discrete orthogonal polynomials, Toda type integrable equations, tau functions, generalized hypergeometric series, Laguerre--Freud equations}
\maketitle

\allowdisplaybreaks

\section{Introduction}

Discrete multiple orthogonal polynomials, $\tau$-functions, generalized hypergeometric series, Pearson equations, Laguerre--Freud equations, and Toda type equations are intricate mathematical concepts that are deeply intertwined and have broad applications across various fields of mathematics and physics. They are fundamental tools for understanding complex systems, integrable models, and specialized functions. 

In our prior work, cf. \cite{prior}, we focused on the scenario involving two weights. Now, in this exposition, we venture into a deeper investigation of discrete multiple orthogonal polynomials encompassing an arbitrary number of weights. We aim to unveil the intricate mathematical landscape surrounding these polynomials, uncovering their symbiotic connections with $\tau$-functions and generalized hypergeometric series. Additionally, we explore their fascinating applications in Laguerre--Freud equations and various instances of Toda equations.

Orthogonal polynomials, see \cite{Ismail, Chihara, Beals}, an indispensable cornerstone of Applied Mathematics and Theoretical Physics, hold sway over a vast array of disciplines, finding utility in approximation theory, numerical analysis, and beyond. Within this expansive realm, multiple orthogonal polynomials and discrete orthogonal polynomials emerge as particularly intriguing subclasses.

Originating in 1895, Pearson's pioneering work \cite{Pearson} in curve fitting gave birth to a renowned family of frequency curves encapsulated by the differential equation $\frac{f^{\prime}(x)}{f(x)}=\frac{p_1(x)}{p_2(x)}$. Here, $f$ denotes the probability density function, and $p_i$ represents polynomials of degree at most $i$, for $i=1,2$. Pearson equations, an enduring subject of study, find their nexus with orthogonal polynomials through their solutions, which often align with or closely resemble certain families of orthogonal polynomials. Central to this connection is the weight function that underpins the orthogonality properties of these polynomials.
These equations have garnered considerable attention owing to their intimate connection with special functions, including classical orthogonal polynomials like Legendre, Hermite, and Laguerre polynomials, as well as their extensions. Our approach in this study involves employing the Gauss-Borel method for orthogonal polynomials, extensively discussed in a review paper \cite{intro}. Building upon our prior research, which explored the application of this technique to various domains, such as discrete hypergeometric orthogonal polynomials, Toda equations, $\tau$-functions, and Laguerre--Freud equations  \cite{Manas_Fernandez-Irisarri,Fernandez-Irrisarri_Manas_2, Fernandez-Irrisarri_Manas_1.0, Fernandez-Irrisarri_Manas_1.1}, we delve deeper into its implications in this paper.

Multiple orthogonal polynomials \cite{nikishin_sorokin, Ismail, andrei_walter, afm} emerge when the orthogonality conditions involve multiple weight functions. In contrast with classical orthogonal polynomials, which rely on a single weight function, multiple orthogonal polynomials possess a more intricate structure due to the presence of several weight functions. These polynomials have been extensively studied and find applications in various fields, including simultaneous approximation, random matrix theory, and statistical mechanics. For insights into the utilization of the Pearson equation in multiple orthogonality, refer to \cite{Coussment, bfm} and also \cite{bfm2}. 

In contrast, discrete orthogonal polynomials \cite{Ismail, Nikiforov_Suslov_Uvarov} arise when orthogonality conditions are defined over a discrete set of points. These polynomials are employed in diverse areas such as combinatorial optimization, signal processing, and coding theory. They serve as valuable tools for analyzing and solving problems in discrete settings, where the underlying structure often comprises a finite or countable set of points. Discrete Pearson equations play a crucial role in the classification of classical discrete orthogonal polynomials \cite{Nikiforov_Suslov_Uvarov}. When the weight satisfies a discrete Pearson equation, we encounter semiclassical discrete orthogonal polynomials, as discussed in \cite{diego_paco, diego_paco1, diego, diego1}. Furthermore, multiple discrete orthogonal polynomials have been examined in \cite{Arvesu}.

The realm of integrable discrete equations \cite{Hietarinta} stands as an enthralling and consequential domain within Mathematical Physics, shedding light on the dynamics of discrete systems endowed with extraordinary attributes. These equations boast a complex algebraic and geometric framework, affording them soliton solutions, conservation laws, and integrability properties. Integrability in discrete equations transcends mere solvability, encompassing a wealth of conserved quantities and symmetries \cite{Adler}. This integrability property paves the way for the development of robust mathematical tools to analyze and comprehend their behavior.

A distinctive hallmark of integrable discrete equations lies in their intimate connection with orthogonal polynomials. These polynomials assume a pivotal role in crafting discrete equations with distinct characteristics. The nexus between discrete equations and orthogonal polynomials offers profound insights into their solutions, recursion relations, and symmetry properties.

The $\tau$-function, as elucidated by Harnad \cite{harnad}, stands as a linchpin in the domain of integrable systems, seamlessly bridging the spectral theory of linear operators with the dynamics of soliton equations. Its significance reverberates across various mathematical realms, including algebraic geometry, representation theory, and special functions.

Semiclassical orthogonal polynomials and their associated functionals, supported on complex curves, have been subject to extensive scrutiny in the context of isomonodromic $\tau$ functions and matrix models by Bertola, Eynard, and Harnad \cite{bertola}. 

In the sphere of discrete multiple orthogonal polynomials, the $\tau$-function assumes a paramount role, encapsulating the intrinsic integrable structure and offering insights into the dynamics and symmetries of the corresponding discrete systems. Its exploration unveils profound connections between diverse mathematical entities and unveils hidden patterns lurking within.

Generalized hypergeometric series, a focal point of this research, represent a class of special functions prevalent across diverse mathematical landscapes \cite{Hipergeometricos, LibrodeHypergeom}. Renowned for their exceptional properties, they have been subject to extensive inquiry owing to their connections with combinatorics, number theory, and mathematical physics. These series play a pivotal role in expressing solutions of differential equations, evaluating integrals, and comprehending the behavior of various mathematical models.

Crucially, generalized hypergeometric series occupy a central position in the Askey scheme, a comprehensive framework that unifies essential families of orthogonal functions through a chain of limits \cite{Koekoek}. The Askey scheme, as extended to encompass multiple orthogonality in \cite{AskeyII}, provides a structured approach to understanding various families of special functions.

Recently, we have made strides in advancing the multiple Askey scheme by establishing hypergeometric expressions for Hahn type I multiple orthogonal polynomials and their progeny within this scheme \cite{BDFM}, see \cite{BDFM1} for the corresponding bidiagonal factorization, and \cite{BDFM2,BDFM3} for the explicit expressions for type I and II classical multiple orthogonal polynomials and ts recursion coefficients.

Laguerre--Freud equations form a family of differential equations that surface in the realm of orthogonal polynomials associated with exponential weights. These equations encode nonlinear connections among the recursion coefficients within the three-term recurrence relation for orthogonal polynomials. In essence, they entail a set of nonlinear equations governing the coefficients $\{\beta_n,\gamma_n\}$ within the recurrence relation $zP_n(z) = P_{n+1}(z) + \beta_nP_n(z) + \gamma_nP_{n-1}(z)$ characterizing the orthogonal polynomial sequence. These equations manifest as $\gamma_{n+1} = \mathcal{G}(n, \gamma_n, \gamma_{n-1}, \dots, \beta_n, \beta_{n-1},\dots)$ and $\beta_{n+1} = \mathcal{B}(n,\gamma_{n+1},\gamma_n,\dots,\beta_n,\beta_{n-1},\dots)$.

Named Laguerre--Freud equations by Magnus \cite{magnus,magnus1,magnus2,magnus3} with reference to \cite{laguerre,freud}, these equations have garnered attention in various contexts. Numerous studies delve into Laguerre--Freud relations concerning generalized Charlier, generalized Meixner, and type I generalized Hahn cases \cite{smet_vanassche,clarkson,filipuk_vanassche0,filipuk_vanassche1,filipuk_vanassche2,diego}.

Laguerre--Freud equations share a profound connection with Painlevé equations, a family of nonlinear ordinary differential equations initially introduced by  Painlevé in the early 20th century. These equations have garnered considerable attention owing to their integrability properties and the intricate mathematical structures underpinning their solutions. 

The intertwining of orthogonal polynomials with Painlevé equations stems from the fact that specific families of orthogonal polynomials can be linked to solutions of particular Painlevé equations. This association offers profound insights into the solvability and analytic properties of Painlevé equations, enabling the construction of explicit solutions through the theory of orthogonal polynomials. For a comprehensive exploration of this interplay between these domains, refer to \cite{Van Assche,clarkson,clarkson2}.

Expanding upon the insights gleaned from prior investigations \cite{Manas_Fernandez-Irisarri,Fernandez-Irrisarri_Manas_1.0,Fernandez-Irrisarri_Manas_1.1, Fernandez-Irrisarri_Manas_2}, this paper delves into the  interrelations among discrete multiple orthogonal polynomials, $\tau$-functions, generalized hypergeometric series, and specific equations such as Laguerre--Freud equations and Toda type equations.

In contrast, Toda type equations represent nonlinear partial differential-difference equations of paramount importance in mathematical physics and integrable systems \cite{harnad,Hietarinta}. Unraveling the nexus between these equations and the aforementioned mathematical constructs offers profound insights into the dynamics of discrete systems, the behavior of special functions, and the intricacies of integrable models.

In this paper, our goal is to offer a comprehensive exploration of discrete multiple orthogonal polynomials with an arbitrary number of weights, $\tau$-functions, generalized hypergeometric series, Laguerre--Freud equations, and Toda type equations. 
Moreover, we will investigate the intricate interplay between these entities, uncovering their connections and mutual dependencies. Specifically, we will examine how discrete multiple orthogonal polynomials intertwine with $\tau$-functions and generalized hypergeometric series, unraveling the underlying mathematical relationships.

Furthermore, we will delve into the practical implications of these mathematical concepts by exploring their applications in Laguerre--Freud equations and Toda type equations. By examining both continuous and discrete variants, we aim to showcase their significance in the realm of mathematical physics and integrable systems, illustrating their versatility and utility in diverse contexts.

The layout of the paper is as follows. We continue this section with a basic introduction to discrete multiple orthogonal polynomials and their associated tau functions. In the second section, we delve into Pearson equations for multiple orthogonal polynomials, generalized hypergeometric series, and the corresponding hypergeometric discrete multiple orthogonal polynomials. We discuss their properties, including the Laguerre--Freud matrix that models the shift in the spectral variable, as well as contiguity relations and their consequences. 

In the third section, we present two of the main findings discussed in this paper. In Theorem \ref{theo:third_order_PDE}, we present nonlinear partial differential equations of the Toda type, for which the hypergeometric tau functions provide solutions. Additionally, we discuss  a completely discrete system, extending the completely discrete Nikhoff--Capel Toda equation, and its solutions in terms of the hypergeometric tau functions. 

Finally, in the fourth section, we present explicit Laguerre--Freud equations for the first time for the multiple versions of the generalized Charlier and generalized Meixner II orthogonal polynomials.

\subsection{Discrete multiple orthogonality on the step line with an arbitrary number of weights}

Let's delve into the fundamentals of discrete multiple orthogonality concisely.

We start by defining two sets of vectors of monomials in the variable $x$:
\[
\begin{aligned}
	X &= \begin{bNiceMatrix} 1 \\ x \\ x^2 \\ \Vdots \end{bNiceMatrix}, &
	X^{(a)} &= \begin{bNiceMatrix} e_a \\ x e_a \\ x^2 e_a \\ \Vdots \end{bNiceMatrix},
\end{aligned}
\]
where $a \in \{1, \dots, p\}$ and $\{e_a\}_{a=1}^p$ represents the canonical basis in $\mathbb{R}^p$.

Next, we introduce the moment matrix:
\[
\mathscr{M} \coloneqq \sum^{\infty}_{k=0} X(k)\big(X^{(1)}(k)w^{(1)}(k)+\dots+X^{(q)}(k)w^{(q)}(k)\big)^{\intercal},
\]
where functions $w^{(a)}$, $a \in \{1, \dots, p\}$, denote $p$ weights supported on the homogeneous lattice $\mathbb{N}_0$.

Now, let's discuss some  key matrix components:
\begin{enumerate}
	\item \textbf{\emph{Projection Matrices:}} These are semi-infinite matrices defined as
\[
I^{(a)} \coloneqq \text{diag}(0, \dots, 1, 0, \dots, 0, 1, \dots),
\]
where $a \in \{1, \dots, p\}$, and non-zero diagonal entries occur in positions $a+ip$ for $i \in \mathbb{N}$.
\item \textbf{\emph{Truncated Projection Matrices:}} These are defined as
\[
\tilde{I}^{(a)} \coloneqq \text{diag}(0, \dots, 0, 1, 0, \dots, 0) \in \mathbb{R}^{p\times p},
\]
with the non-zero element at $(\tilde{I}^{(a)})_{a,a}$, for $a\in\{1,\dots,p\}$.
\item \textbf{\emph{Shift Matrices: }}These are given by
\[
\Lambda \coloneq  \left[\begin{NiceMatrix}[columns-width=auto]
		0 & 1 & 0&\Cdots&\\
		0& 0&1 &\Ddots&\\
		\Vdots[shorten-end=4pt]&\Ddots[shorten-end=-10pt]&\Ddots& \Ddots&\\
		&&& &
	\end{NiceMatrix}\right],
\]
and $\Lambda^{(a)} \coloneqq \Lambda^q I^{(a)}$, $a \in \{1, \dots, p\}$.
\end{enumerate}

We observe the following equations hold true:
\[
\begin{aligned}
	\Lambda X(x) &= xX(x), & \Lambda^{(a)}X^{(b)}(x) &= xX^{(a)}(x)\delta_{a,b}.
\end{aligned}
\]
The moment matrix $\mathscr{M}$ is a $p$-Hankel matrix, satisfying:
\[
\Lambda \mathscr{M} = \mathscr{M}(\Lambda^{\intercal})^p.
\]

We then consider the Gauss-Borel factorization for $\mathscr{M}$:
\[
\mathscr{M} = S^{-1}H\tilde{S}^{-\intercal},
\]
where $S$ and $\tilde{S}$ are lower unitriangular matrices and $H$ is diagonal.

These matrices $S$ and $\tilde{S}$ can be expressed by adding diagonal matrices:
\begin{align*}
\begin{aligned}
	S &= I + \Lambda^{\intercal} S^{[1]} + (\Lambda^{\intercal})^2 S^{[2]} + \cdots, & S^{-1} &= I + \Lambda^{\intercal} S^{[-1]} + (\Lambda^{\intercal})^2 S^{[-2]} + \cdots,
\end{aligned}\\
\begin{aligned}
	\tilde{S} &= I + \Lambda^{\intercal} \tilde{S}^{[1]} + (\Lambda^{\intercal})^2 \tilde{S}^{[2]} + \cdots, & \tilde{S}^{-1} &= I + \Lambda^{\intercal} \tilde{S}^{[-1]} + (\Lambda^{\intercal})^2 \tilde{S}^{[-2]} + \cdots.
\end{aligned}
\end{align*}

Additionally, we define the lowering and rising shift operators, $\mathfrak{a}_{-}$ and $\mathfrak{a}_{+}$, for diagonal matrices $D$ as:

\[
\begin{aligned}
	\mathfrak{a}_{-}\text{diag}(m_0,m_1,\dots) &= \text{diag}(m_1,m_2,\dots), & \mathfrak{a}_{+}\text{diag}(m_0,m_1,\dots) &= \text{diag}(0,m_0,\dots).
\end{aligned}
\]

Lastly, we define multiple orthogonal polynomials:
\begin{enumerate}
\item \textbf{\emph{Type I: }}
\[
A^{(n)} \coloneqq H^{-1}\tilde{S}X^{(n)},
\]
where $n \in \{1, \dots, p\}$.
\item \textbf{\emph{Type II:}}
\[
B \coloneqq SX.
\]
\end{enumerate}

For perfect systems of weights, the degrees of these polynomials are:
\[
\text{deg } B_n = n, \quad \text{deg } A_n^{(a)} = \left\lceil\frac{n+2-a}{p}\right\rceil-1, \quad a \in \{1, \dots, p\},
\]
where $\lceil x\rceil$ represents the smallest integer greater than or equal to $x$. Multiple orthogonality relations are expressed as:
\[
\begin{aligned}
	\sum_{a=1}^p \sum_{k=0}^{\infty} A_n^{(a)}(k) w^{(a)}(k) k^m& = 0, & k& \in \{0, \ldots, \text{deg } B_{n-1}\}, \\
	\sum_{k=0}^{\infty} B_n(k) w^{(a)}(k) k^m &= 0, & k &\in \{0, \ldots, \text{deg } A_{n-1}^{(a)}\}, & a& \in \{1, \dots, p\}.
\end{aligned}
\]
Let's introduce the $\tau$-functions:
\begin{enumerate}
	\item $\tau$ functions are defined departing from truncations of the moment matrix $M^{[n]}$ as:
	\begin{equation}
		\tau_n \coloneq \det \mathscr{M}^{[n]}.
	\end{equation}
	
	\item $\tau$-associated functions $\tau^j_n$ with $j\in\{1, \dots, n-1\}$, are the matrix determinants $\mathscr{M}^{[n,j]}$ obtained from $\mathscr{M}^{[n]}$ by removing the $(n-j)$-th row $\begin{bmatrix} M_{n-j,0} & \dots & M_{n-j,n-1} \end{bmatrix}$ and adding, as the last row, the row $\begin{bmatrix} M_{n,0} & \dots & M_{n,n-1} \end{bmatrix}$.
\end{enumerate}
  
  Type I polynomials can be expressed as:
  \begin{align}\label{eq:determinant_A}
  	\begin{aligned}
  		A^{(a)}_n(x) &= \frac{1}{\tau_{n+1}}\begin{vNiceArray}{cw{c}{1cm}c|c}[margin]
  			\Block{3-3}<\Large>{\mathscr M^{[n]}} & & & \mathscr{M}_{0,n} \\
  			& & & \Vdots \\
  			&&& \mathscr M_{n-1,n}\\
  			\hline
  			X^{(a)}_0 & \Cdots & X^{(a)}_{n-1} & X^{(a)}_n
  		\end{vNiceArray}, & n\in &\N_0, & & a&\in\{1,\dots,p\}.
  	\end{aligned}
  \end{align}
  
  Similarly, type II polynomials can be expressed as:
  \begin{align}\label{eq:determinant_B}
  	\begin{aligned}
  		B_n(x) &= \frac{1}{\tau_n}\begin{vNiceArray}{cw{c}{1cm}c|c}[margin]
  			\Block{3-3}<\Large>{\mathscr M^{[n]}} & & & 1 \\
  			& & & \Vdots \\
  			&&& x^{n-1}\\
  			\hline
  			\mathscr{M}_{n,0} & \Cdots & \mathscr M_{n,n-1} & x^n
  		\end{vNiceArray}, & n\in \N_0.
  	\end{aligned}
  \end{align}
  
  These multiple orthogonal polynomials satisfy recurrence relations. Considering the multi-Hankel structure of the moment matrix $\mathscr M$, the banded Hessenberg matrix $T$ can be constructed in terms of $S$ and $\tilde{S}$ as follows:
  \[
  T \coloneq S\Lambda S^{-1} = H\tilde{S}^{-\intercal}(\Lambda^\intercal)^p\tilde{S}^{\intercal}H^{-1}.
  \] 
  Thus, we have a $p+2$ terms recurrence relation that can be written as: 
  \[
  \begin{aligned}
  	TB(x) &= xB(x), & T^\intercal A^{(a)}(x) &= xA^{(a)}(x).
  \end{aligned}
  \]
  The banded Hessenberg matrix has $p+2$ non-zero diagonals:
  \[
  T = \Lambda + \alpha^{(0)} + \Lambda^{\intercal}\alpha^{(1)} + \dots + (\Lambda^\intercal)^q\alpha^{(q)},
  \]
  where expressions for $\alpha$ matrices depending on $\tilde{S}$ or $S$ are as follows:
  \begin{align*}
  	\alpha^{(0)} &= \mathfrak{a}_{+}S^{[1]}-S^{[1]}, \\
  	\alpha^{(1)} &= \mathfrak{a}_{+}S^{[2]}-S^{[2]}+S^{[1]}(\mathfrak{a}_{-}S^{[1]}-S^{[1]}), \\
  	\alpha^{(a)} &= \mathfrak{a}_{+}S^{[a+1]}+S^{[-a-1]}+\sum^{a}_{i=1}\mathfrak{a}_{-}^aS^{[a-i]}S^{[-i]}, \\
  	&  = H^{-1}(\mathfrak{a}_{-}^aH)\biggl(\mathfrak{a}_{+}^{p-a}\tilde{S}^{[p-a]}+\mathfrak{a}_{-}^a\tilde{S}^{[-(p-a)]} +\sum^{q-a-1}_{j=1}(\mathfrak{a}_{+}^j\tilde{S}^{[j]})\mathfrak{a}_{-}^a\tilde{S}^{[-(p-a-j)]}\biggr), \\
  	&\hspace{7pt}\vdots \\
  	\alpha^{(p-1)} &= H^{-1}(\mathfrak{a}_{-}^{p-1}H)(\mathfrak{a}_{+}\tilde{S}^{[1]}-\mathfrak{a}_{-}^{q-1}\tilde{S}^{[1]}), \\
  	\alpha^{(p)} &= (\mathfrak{a}_{-}^pH)H^{-1}.
  \end{align*}

    Partial recurrence matrices  $T^{(n)}$ are given  as follows
    \begin{equation*}
        T^{(a)}=H^{-1}\tilde{S}\Lambda_a\tilde{S}^{-1}H.
    \end{equation*}
Note  that 
    \begin{equation*}
     \sum^{q}_{a=1}T^{(a)}=T^\intercal.   
    \end{equation*}

 For this discrete scenario we require also of    the Pascal matrices $L^{\pm}$ are defined as matrices with entries are   
    \begin{align*} 
L^{+}_{n,m}&=\begin{cases} \binom{n}{m},& n \geq m, \\ 0,& n<m,  \end{cases}&
L^{-}_{n,m}&=\begin{cases} {}(-1)^{n+m} \binom{n}{m},& n \geq m, \\ 0&n<m.\end{cases}
\end{align*}
Note that $L^+L^-=I.$
Moreover, we can readily check  that
\begin{align*}
    X(x+1)&=L^+X(x), & X(x-1)&=L^-X(x).
\end{align*}
Pascal matrices can be expanded as follows
\begin{equation*}
    L^{\pm}=I\pm\Lambda^\intercal D+(\Lambda^\intercal)^2D^{[2]}+\cdots,
\end{equation*}
where 
\[\begin{aligned}
    D&\coloneq\diag(1,2,3,\dots), & D^{[n]}&\coloneq\frac{1}{n}\diag(n^{(n)},(n+1)^{(n)},\dots)
\end{aligned}\]
and $x^{(n)}=x(x-1)\dots(x-n+1).$

Partial Pascal matrices $L^{(n) \pm}$ are defined departing from the truncated projectors as follows and they have the following nonzero entries:
\[
L_{2 n+a-1,2 m+a-1}^{ \pm(a)}=( \pm 1)^{n+m}\left(\begin{array}{c}
	n \\
	m
\end{array}\right), \quad a \in\{1,\dots,p\}, \quad n, m \in \mathbb{N}_0, \quad n \geq m
\]

Partial Pascal matrices  satisfy
\[\begin{aligned}  X^{(a)}(x+1)&=L^{(a) +}X^{(a)}(x), & X^{(a)}(x-1)&=L^{(a)-}X^{(a)}(x).\end{aligned}\]
Partial Pascal matrices can be expanded as:
\begin{equation*}
    L^{(a)\pm}=I^{(a)}\pm(\Lambda^\top)^qD_a+(\Lambda^\top)^{2q}D_a^{[2]}+\dots.
\end{equation*}
where we have
$$
\begin{aligned}
	D_a^{[n]}&\coloneq \frac{1}{n} \operatorname{diag}\left(n^{(n)}e_a, (n+1)^{(n)}e_a,  \ldots\right),  & a&\in\{1,\dots,p\}.
\end{aligned}
$$
 The dressed Pascal matrices can be defined from Pascal matrices as follows
\[ \begin{aligned} \Pi^{+}&=SL^{+}S^{-1}, & \Pi^{-}&=SL^{-}S^{-1}\end{aligned}\]
and satisfy
\[\begin{aligned}B(x+1)&=\Pi^+B(x), & B(x-1)&=\Pi^-B(x).\end{aligned}\]

    Partial dressed Pascal matrices are defined as follows
\[     \begin{aligned} \Pi^{(a)+}&=H^{-1}\tilde{S}L^{(a)+}\tilde{S}^{-1}H, & \Pi^{(a)-}&=H^{-1}\tilde{S}L^{(a)-}\tilde{S}^{-1}H,\end{aligned}\]
so that  the following relations hold
\[\begin{aligned}
    \Pi^{(a)+}A^{(a)}(x)&=A^{(a)}(x+1), & \Pi^{(a)-}A^{(a)}(x)&=A^{(a)}(x-1).
\end{aligned}\]

In this paper, generalized hypergeometric series are used, and they are given by
    \begin{equation*}
        {}_MF_N\begin{pmatrix}b_1,\dots,b_M&\\ &;\eta\\ c_1,\dots,c_N& \end{pmatrix}= {}_MF_N(b_1,\dots,b_M; c_1,\dots, c_N;\eta):=\sum^{\infty}_{k=0}\frac{(b_1)_k\dots(b_M)_k}{(c_1)_k\dots(c_N)_k}\frac{\eta^k}{k!}
    \end{equation*}
    where $(a)_k$ is known as the Pochhammer symbol and is defined as
\[    \begin{aligned}
        (a)_k&:=a(a+1)\dots(a+k-1), & (a)_0&=1.
    \end{aligned}\]

\section{Discrete Multiple Orthogonal Polynomials and Pearson equation}

\subsection{Pearson equations}
From hereon we will assume that all the weights adhere to the Pearson equations, represented as:
\begin{equation}\label{eq:pearson}
	\theta(x+1)w^{(a)}(x+1)=\sigma_n(x)w^{(a)}(x),\end{equation}
where $\theta(x)$ and $\sigma_a(x)$ are polynomials and $a\in\{1,\dots,p\}$.

First, we give an important relation fulfilled by the moment matrix $\mathscr M$:
\begin{teo}
	The moment matrix satisfies the following equation:
	\begin{equation}\label{symMom}
		\theta(\Lambda)\mathscr M=L^+\mathscr M\left(\sum_{a=1}^pL^{(a)+}\sigma_a(\Lambda^{(n)})\right)^\top.
	\end{equation}
\end{teo}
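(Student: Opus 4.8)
The plan is to work straight from the series definition $\mathscr{M} = \sum_{k\ge0} X(k)\bigl(\sum_{a=1}^p X^{(a)}(k)\,w^{(a)}(k)\bigr)^{\intercal}$ and to let the three structural relations already recorded in the excerpt do the work: the eigenrelation $\theta(\Lambda)X(k)=\theta(k)X(k)$, the Pascal shifts $X(k+1)=L^+X(k)$ and $X^{(a)}(k+1)=L^{(a)+}X^{(a)}(k)$, and the identity $\sigma_a(\Lambda^{(a)})X^{(a)}(k)=\sigma_a(k)X^{(a)}(k)$, the last of which just iterates $\Lambda^{(a)}X^{(a)}(k)=kX^{(a)}(k)$ over the monomials of $\sigma_a$ (the constant term being handled by $(\Lambda^{(a)})^0=I$). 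Applying $\theta(\Lambda)$ on the left and using the eigenrelation termwise immediately gives $\theta(\Lambda)\mathscr{M}=\sum_{a=1}^p\sum_{k\ge0}\theta(k)\,X(k)\,X^{(a)}(k)^{\intercal}w^{(a)}(k)$, so it is enough to process each weight $a$ on its own.

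For fixed $a$ I would invoke the Pearson equation \eqref{eq:pearson} in the form $\theta(k)w^{(a)}(k)=\sigma_a(k-1)w^{(a)}(k-1)$, substituting it into every summand; the $k=0$ contribution carries $w^{(a)}(-1)$ and is discarded by the boundary condition inherent in the lattice $\N_0$. Shifting $k\mapsto k+1$ then produces $\sum_{k\ge0}X(k+1)\,X^{(a)}(k+1)^{\intercal}\,\sigma_a(k)\,w^{(a)}(k)$. Feeding in the Pascal shifts together with the scalar-to-matrix move $\sigma_a(k)X^{(a)}(k+1)=\sigma_a(k)L^{(a)+}X^{(a)}(k)=L^{(a)+}\sigma_a(\Lambda^{(a)})X^{(a)}(k)$ and transposing, each summand becomes $L^+\,X(k)\,X^{(a)}(k)^{\intercal}\,\bigl(L^{(a)+}\sigma_a(\Lambda^{(a)})\bigr)^{\intercal}w^{(a)}(k)$. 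Pulling the $k$-independent matrices out of the sum yields $L^+\,\mathscr{M}^{(a)}\,\bigl(L^{(a)+}\sigma_a(\Lambda^{(a)})\bigr)^{\intercal}$, where $\mathscr{M}^{(a)}\coloneqq\sum_{k\ge0}X(k)X^{(a)}(k)^{\intercal}w^{(a)}(k)$ is the single-weight moment matrix, so that $\mathscr{M}=\sum_{a=1}^p\mathscr{M}^{(a)}$.

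The main obstacle is the final reassembly, since the claimed identity carries the \emph{full} $\mathscr{M}$ whereas the computation only delivers the partial $\mathscr{M}^{(a)}$. I would close this gap with a support argument organized by residue class modulo $p$. The columns of $\mathscr{M}^{(b)}$ are supported on the positions of the $b$-class, as they come from $X^{(b)}(k)^{\intercal}$. On the other side, the expansions given in the excerpt show that $L^{(a)+}=I^{(a)}\pm(\Lambda^{\intercal})^qD_a+\cdots$ and $\Lambda^{(a)}=\Lambda^qI^{(a)}$ have all nonzero rows and columns in the $a$-class; consequently every nonzero column of $L^{(a)+}\sigma_a(\Lambda^{(a)})$ lies in the $a$-class, the only term of $\sigma_a(\Lambda^{(a)})$ reaching outside it being the constant $\sigma_a(0)I$, which enters solely as $\sigma_a(0)L^{(a)+}$ and is thereby confined to the $a$-class. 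Hence $\bigl(L^{(a)+}\sigma_a(\Lambda^{(a)})\bigr)^{\intercal}$ has its nonzero rows in the $a$-class, and since the $a$- and $b$-classes are disjoint for $b\ne a$ we get $\mathscr{M}^{(b)}\bigl(L^{(a)+}\sigma_a(\Lambda^{(a)})\bigr)^{\intercal}=0$. Therefore $\mathscr{M}^{(a)}\bigl(L^{(a)+}\sigma_a(\Lambda^{(a)})\bigr)^{\intercal}=\mathscr{M}\bigl(L^{(a)+}\sigma_a(\Lambda^{(a)})\bigr)^{\intercal}$, and summing over $a$ while factoring out $L^+$ and $\mathscr{M}$ reproduces \eqref{symMom} exactly. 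The only points that genuinely need care are the boundary/convergence justification for the index shift and the verification that $\Lambda^q$ commutes with $I^{(a)}$ (so that powers of $\Lambda^{(a)}$ stay in the $a$-class); everything else is bookkeeping.
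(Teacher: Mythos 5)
Your proposal is correct and follows essentially the same route as the paper's own proof: apply the eigenrelation $\theta(\Lambda)X(k)=\theta(k)X(k)$, substitute the Pearson equation, shift the summation index, and insert the Pascal shifts $X(k+1)=L^+X(k)$, $X^{(a)}(k+1)=L^{(a)+}X^{(a)}(k)$ together with $\sigma_a(k)X^{(a)}(k)=\sigma_a(\Lambda^{(a)})X^{(a)}(k)$. The only difference is that you split $\mathscr M=\sum_a\mathscr M^{(a)}$ and prove explicitly, via the residue-class support argument, that the cross terms $\mathscr M^{(b)}\bigl(L^{(a)+}\sigma_a(\Lambda^{(a)})\bigr)^{\intercal}$ vanish for $b\neq a$ — a fact the paper's final equality uses implicitly without comment, so your write-up actually closes a small expository gap rather than deviating from the argument.
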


\begin{proof}
	Beginning with $\theta (\Lambda)\mathscr M$, we expand it as follows:
	\begin{align*}
		\theta(\Lambda)\mathscr M&=\sum^{\infty}_{k=1}\theta(k)X(k)\left(\sum^{p}_{a=1}w^{(a)}(k)X^{(a)}(k)\right)^\top\\
		&=\sum^{\infty}_{k=0}\theta(k+1)X(k+1)\left(\sum^{p}_{a=1}w^{(a)}(k+1)X^{(a)}(k+1)\right)^\top\\
		&=\sum^{\infty}_{k=0}X(k+1)\left(\sum^{p}_{a=1}\sigma_a(k)w^{(a)}(k)X^{(a)}(k+1)\right)^\top\\
		&=\sum^{\infty}_{k=0}L^+X(k)\left(\sum^p_{a=1}\sigma_a(k)w^{(a)}(k)X^{(a)}(k)^\top(L^{(a)+})\right)^\top\\
		&=L^+\mathscr M\left(\sum^p_{a=1}L^{(a)+}\sigma_a(\Lambda^{(a)})\right)^\top.
	\end{align*}
\end{proof}
Then, we provide its counterpart for the Hessenberg recurrence matrix:
\begin{pro}
	The following equation holds for the Hessenberg matrix \( T \) if the weights satisfy the Pearson equations:
	\begin{equation}
		\Pi^{-}\theta(T)=\sum^p_{a=1}\sigma_a(T^{(a)})^\top(\Pi^{(a)+})^\top.
	\end{equation}
\end{pro}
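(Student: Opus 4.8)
The plan is to \emph{dress} the moment-matrix identity \eqref{symMom} of the preceding theorem by inserting the Gauss--Borel factorization $\mathscr M=S^{-1}H\tilde S^{-\intercal}$, thereby converting the bare shift $\Lambda$ and the Pascal matrices into their dressed counterparts. The guiding observation is that the two triangular factors of $\mathscr M$ generate two different dressings: conjugation by $S$ sends $\theta(\Lambda)\mapsto\theta(T)$ and $L^-\mapsto\Pi^-$, since $T=S\Lambda S^{-1}$ and $\Pi^-=SL^-S^{-1}$ by definition, while the transposed conjugation by $H^{-1}\tilde S(\cdot)\tilde S^{-1}H$ sends $\sigma_a(\Lambda^{(a)})\mapsto\sigma_a(T^{(a)})$ and $L^{(a)+}\mapsto\Pi^{(a)+}$. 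Identity \eqref{symMom} should factorize compatibly with this splitting, the left-hand side landing in the $S$-dressing and the right-hand side in the $\tilde S$-dressing.

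First I would left-multiply \eqref{symMom} by $L^-=(L^+)^{-1}$, using $L^+L^-=I$, to obtain
\[
L^-\theta(\Lambda)\mathscr M=\mathscr M\Big(\sum_{a=1}^p L^{(a)+}\sigma_a(\Lambda^{(a)})\Big)^{\intercal}.
\]
Substituting $\mathscr M=S^{-1}H\tilde S^{-\intercal}$, then left-multiplying by $S$ and right-multiplying by $\tilde S^{\intercal}H^{-1}$, the factors $\tilde S^{-\intercal}\tilde S^{\intercal}$ and $HH^{-1}$ on the left collapse and yield
\[
S L^-\theta(\Lambda)S^{-1}=H\tilde S^{-\intercal}\Big(\sum_{a=1}^p L^{(a)+}\sigma_a(\Lambda^{(a)})\Big)^{\intercal}\tilde S^{\intercal}H^{-1}.
\]
The left-hand side factors as $(SL^-S^{-1})(S\theta(\Lambda)S^{-1})=\Pi^-\theta(T)$, which is exactly the left-hand side of the claim.

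It then remains to recognize the right-hand side as $\sum_{a}\sigma_a(T^{(a)})^{\intercal}(\Pi^{(a)+})^{\intercal}$. Transposing the definitions gives $\sigma_a(T^{(a)})^{\intercal}=H\tilde S^{-\intercal}\sigma_a(\Lambda^{(a)})^{\intercal}\tilde S^{\intercal}H^{-1}$ and $(\Pi^{(a)+})^{\intercal}=H\tilde S^{-\intercal}(L^{(a)+})^{\intercal}\tilde S^{\intercal}H^{-1}$; in the product the adjacent inner factors $\tilde S^{\intercal}H^{-1}\cdot H\tilde S^{-\intercal}$ telescope to the identity, so
\[
\sigma_a(T^{(a)})^{\intercal}(\Pi^{(a)+})^{\intercal}=H\tilde S^{-\intercal}\big(\sigma_a(\Lambda^{(a)})^{\intercal}(L^{(a)+})^{\intercal}\big)\tilde S^{\intercal}H^{-1}=H\tilde S^{-\intercal}\big(L^{(a)+}\sigma_a(\Lambda^{(a)})\big)^{\intercal}\tilde S^{\intercal}H^{-1}.
\]
Summing over $a$ and pulling the transpose through the sum reproduces precisely the right-hand side obtained in the previous step, closing the argument.

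The main obstacle is purely bookkeeping: since $\Lambda$, $L^-$, $S$, $\tilde S$ and $H$ do not commute, I must keep the order of factors and the placement of transposes exact, so that the two telescoping cancellations — $SL^-S^{-1}\cdot S\theta(\Lambda)S^{-1}$ on the left and $\tilde S^{\intercal}H^{-1}H\tilde S^{-\intercal}$ inside each summand on the right — are invoked in the correct positions. The conceptual content is merely that the Pearson symmetry \eqref{symMom} is compatible with both triangular factors of the Gauss--Borel decomposition simultaneously.
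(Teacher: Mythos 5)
Your proposal is correct and follows essentially the same route as the paper: both substitute the Gauss--Borel factorization $\mathscr M=S^{-1}H\tilde S^{-\intercal}$ into the moment-matrix symmetry \eqref{symMom} and then conjugate by the triangular factors so that $\theta(\Lambda)\mapsto\theta(T)$, $L^{\pm}\mapsto\Pi^{\pm}$, $\sigma_a(\Lambda^{(a)})\mapsto\sigma_a(T^{(a)})$ and $L^{(a)+}\mapsto\Pi^{(a)+}$. The only (immaterial) difference is that you clear $L^{+}$ to the left as $L^{-}$ at the outset, landing directly on $\Pi^{-}\theta(T)=\sum_a\sigma_a(T^{(a)})^{\intercal}(\Pi^{(a)+})^{\intercal}$, whereas the paper first derives $\theta(T)=\Pi^{+}\sum_a\sigma_a(T^{(a)})^{\intercal}(\Pi^{(a)+})^{\intercal}$ and implicitly multiplies by $\Pi^{-}=(\Pi^{+})^{-1}$.
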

\begin{proof}
	Utilizing equation \eqref{symMom} and the Gauss-Borel factorization for the moment matrix, we have
	\begin{equation*}
		\theta(\Lambda)S^{-1}H\tilde{S}^{-\top}=L^+S^{-1}H\tilde{S}^{-\top}\left(\sum^{p}_{a=1}L^{(a)+}\sigma_a(\Lambda^{(a)})\right)^\top.
	\end{equation*}
	If we multiply both sides by \( S \) on the right and by \( \tilde{S}^\top H^{-1} \) on the left, we obtain:
	\begin{equation*}
		S\theta(\Lambda)S^{-1}=SL^+S^{-1}H\tilde{S}^{-\top}\sum^p_{a=1}\sigma_a(\Lambda^{(a)})^\top\tilde{S}^\top H^{-1}H\tilde{S}^{-\top}(L^{(a)+})^\top\left( H\tilde{S}^{-\top}\right)^{-1}.
	\end{equation*}
	Finally, we get
	\begin{equation*}
		\theta(T)=\Pi^+\sum^p_{a=1}\sigma_a(T^{(a)})^\top(\Pi^{(a)+})^\top.
	\end{equation*}
\end{proof}
\subsection{Relation with generalized hypergeometric series}
If we write the polynomials $\theta$ and $\sigma_n$ as 
\[\begin{aligned}
    \theta(k)&=k(k+c_1-1)\cdots(k+c_N-1), &  \sigma_a(k)=\eta^{(a)}(k+b_1^{(a)})\cdots(z+b_{M^{(a)}}^{(a)}),
\end{aligned}\]
with $M^{(a)},N \in \mathbb{N}_0$ and $a \in \{1,\dots,p\},$  the weights that satisfy the Pearson equation are:
\begin{equation*}
    w^{(a)}(k)=\frac{(b_1^{(m)})_k\cdots(b_{M^{(a)}}^{(a)})_k}{(c_1)_k\dots(c_N)_k}\frac{(\eta^{(a)})^k}{k!},
\end{equation*}
where $a\in \{1,\dots,p\}.$
We define the following operators:
\[\begin{aligned}
    \vartheta^{(a)}&:=\eta^{(a)}\frac{\partial}{\partial \eta^{(a)}}, &   \vartheta&:=\sum^{q}_{a=1}\vartheta^{(a)}.
\end{aligned}\]
\begin{teo}\label{teo:moments}
    The moment matrix can be written in terms of hypergeometric functions as follows
    \begin{equation}
        \mathscr M=\left[\begin{NiceMatrix}
            \rho^{(1)}_0&\rho^{(2)}_0&\Cdots&\rho^{(p)}_0&\rho^{(1)}_1&\Cdots&\rho^{(p)}_1&\Cdots\\[3pt]
            \rho^{(1)}_1&\rho^{(2)}_1&\Cdots&\rho^{(p)}_1&\rho^{(1)}_2&\Cdots&\rho^{(p)}_2&\Cdots\\
            \Vdots&\Vdots&&\Vdots&\Vdots&&\Vdots&
        \end{NiceMatrix}\right],
    \end{equation}
    where the moments are
    \begin{align}\label{moments}\rho^{(a)}_0&:={}_{M_a}F_N(b_1^{(a)},\dots,b^{(a)}_{M^{(a)}};c_1,\dots,c_N;\eta^{(a)}), &       \rho^{(a)}_m&=(\vartheta^{(a)})^m\rho^{(a)}_0,\end{align}
    and $a\in \{1,\dots,p\}.$
\end{teo}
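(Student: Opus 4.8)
The plan is to read the entries of $\mathscr M$ directly off its definition and then recognize each entry as an iterated application of the Euler operator $\vartheta^{(a)}=\eta^{(a)}\partial_{\eta^{(a)}}$ to the basic moment $\rho^{(a)}_0$.

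First I would unwind the block structure of the defining sum. Since $X(k)$ has $i$-th component $k^i$ and the vector $\sum_{a=1}^p w^{(a)}(k)X^{(a)}(k)$ carries $k^j w^{(a)}(k)$ in block column $j$, internal index $a$ (because $X^{(a)}(k)$ places $k^j e_a$ in its $j$-th block of size $p$), the corresponding entry of the moment matrix is
\[
\mathscr M_{i,\,(j,a)}=\sum_{k=0}^\infty k^{i}\,k^{j}\,w^{(a)}(k)=\sum_{k=0}^\infty k^{i+j}w^{(a)}(k).
\]
Setting $\rho^{(a)}_m:=\sum_{k=0}^\infty k^m w^{(a)}(k)$, this entry equals $\rho^{(a)}_{i+j}$; it depends only on $a$ and on the sum $i+j$, which is precisely the Hankel-by-blocks layout displayed in the statement. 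This step is pure bookkeeping with the indexing.

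Next I would evaluate $\rho^{(a)}_0$. Substituting the explicit Pearson weight
\[
w^{(a)}(k)=\frac{(b_1^{(a)})_k\cdots(b_{M^{(a)}}^{(a)})_k}{(c_1)_k\cdots(c_N)_k}\frac{(\eta^{(a)})^k}{k!}
\]
into $\rho^{(a)}_0=\sum_{k\ge 0}w^{(a)}(k)$ reproduces verbatim the series defining ${}_{M_a}F_N(b_1^{(a)},\dots,b^{(a)}_{M^{(a)}};c_1,\dots,c_N;\eta^{(a)})$, giving the first identity in \eqref{moments}.

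Finally, the crux is that $\vartheta^{(a)}$ acts on monomials by $\vartheta^{(a)}(\eta^{(a)})^k=k\,(\eta^{(a)})^k$, so on the power series for $\rho^{(a)}_0$ it multiplies the $k$-th summand by $k$; iterating $m$ times multiplies it by $k^m$ and yields
\[
(\vartheta^{(a)})^m\rho^{(a)}_0=\sum_{k=0}^\infty k^m w^{(a)}(k)=\rho^{(a)}_m,
\]
which is the second identity. The only analytic point meriting care is the termwise action of $\partial_{\eta^{(a)}}$, justified by the absolute and locally uniform convergence of the series on its domain of convergence (or, reading $\eta^{(a)}$ as a formal variable, the manipulation is purely algebraic). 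I expect the main obstacle to be none of the individual calculations but rather keeping the block indexing consistent between the moment-matrix definition and the displayed array; once that is fixed, the theorem reduces to the elementary eigen-action of the Euler operator on monomials.
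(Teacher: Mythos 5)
Your proposal is correct and follows the same route as the paper, whose entire proof is the one-line remark that the result ``follows from the definition of the moment matrix.'' You have simply made explicit the three steps that remark compresses: the block-index bookkeeping giving $\mathscr M_{i,(j,a)}=\rho^{(a)}_{i+j}$, the identification of $\rho^{(a)}_0$ with the hypergeometric series, and the termwise eigen-action $(\vartheta^{(a)})^m(\eta^{(a)})^k=k^m(\eta^{(a)})^k$.
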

\begin{proof}
    It follows from the definition of the moment matrix.
\end{proof}
\begin{defi}
    We introduce the notation
    \begin{align*}
    	\vec f&=\begin{bNiceMatrix}
    		f_1&\Cdots &f_p
    	\end{bNiceMatrix}, & 	\vec f^{[0]}&=\emptysetAlt,&\begin{aligned}
    	\vec f^{[r]}&=\begin{bNiceMatrix}
    	f_1&\Cdots &f_r
    	\end{bNiceMatrix}, &r&\in\{1,\dots,p-1\}.\end{aligned}
    \end{align*}
    The extended Wronskians are the following determinants:

    \NiceMatrixOptions{cell-space-limits = 1pt}\begin{align*}
    			\mathscr W_{pn+r}[\vec f]&\coloneq \begin{vNiceArray}{w{c}{42pt}|w{c}{42pt}|w{c}{42pt}|ccc|w{c}{52pt}|w{c}{62pt}}[margin]
    			\vec f  &\vartheta \vec f& \vartheta^2\vec f& \phantom{t}&\Cdots[shorten-start=-5pt,shorten-end=-5pt]&\phantom{t}&\vartheta^{n-1}\vec f&\vartheta^{n} \vec f^{[r]}\\\hline
    			\vartheta\vec f  &\vartheta^2 \vec f& \vartheta^3\vec f& \phantom{t}&\Cdots[shorten-start=-5pt,shorten-end=-5pt]&\phantom{t}&\vartheta^{n}\vec f&\vartheta^{n+1} f^{[r]}\\\hline
    			\Vdots&		\Vdots&		\Vdots&&		&&\Vdots&\Vdots\\\hline
    			\vartheta^{pn-1}\vec f  &\vartheta^{pn} \vec f& \vartheta^{pn+1}\vec f& \phantom{t}&\Cdots[shorten-start=-5pt,shorten-end=-5pt]&\phantom{t}&\vartheta^{(p+1)n-2}\vec f &\vartheta^{(p+1)n-1} f^{[r]}\\\hline
    			\Vdots&		\Vdots&		\Vdots&&		&&\Vdots&\Vdots
    			\\\hline
    			\vartheta^{pn+r-1}\vec f  &\vartheta^{pn+r} \vec f& \vartheta^{pnr+1}\vec f& \phantom{t}&\Cdots[shorten-start=-5pt,shorten-end=-5pt]&\phantom{t}&\vartheta^{(p+1)n+r-2}\vec f &\vartheta^{(p+1)n+r-1} f^{[r]}
    	    		\end{vNiceArray},
    	\end{align*}

\end{defi}
\begin{pro}
    The $\tau$ function is the extended Wronskian of  a hipergeometric series:
    \begin{equation}
        \tau_n=\mathscr W_n[{}_{M_1}F_N(b_1^{(1)},\dots,b^{(1)}_{M^{(1)}};c_1,\dots,c_N;\eta^{(1)}),\dots,{}_{M_p}F_N(b_1^{(p)},\dots,b^{(p)}_{M^{(p)}};c_1,\dots,c_N;\eta^{(p)})].
    \end{equation}
\end{pro}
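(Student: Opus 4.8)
The plan is to show that the matrix whose determinant defines the extended Wronskian $\mathscr{W}_n$ is \emph{literally} the truncated moment matrix $\mathscr{M}^{[n]}$, so that the asserted identity $\tau_n=\det\mathscr M^{[n]}=\mathscr W_n$ reduces to an entry-by-entry comparison. By Theorem \ref{teo:moments} the columns of $\mathscr M$ are grouped into blocks of width $p$ indexed by $i\in\N_0$; the column for weight $a$ in block $i$ carries in its row $k$ the entry $\rho^{(a)}_{k+i}$, where $\rho^{(a)}_m=(\vartheta^{(a)})^m\rho^{(a)}_0$. On the other side, the extended Wronskian is assembled from the total operator $\vartheta=\sum_{a=1}^p\vartheta^{(a)}$ applied to the functions $f_a=\rho^{(a)}_0$. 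Hence the whole argument hinges on reconciling these two differentiations and then reading off the column layout.

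The key step is the identity $\vartheta^m f_a=\rho^{(a)}_m$. Since $f_a={}_{M_a}F_N(\dots;\eta^{(a)})$ depends only on the variable $\eta^{(a)}$, the operator $\vartheta^{(b)}=\eta^{(b)}\partial_{\eta^{(b)}}$ annihilates $f_a$ whenever $b\neq a$; moreover the operators $\vartheta^{(1)},\dots,\vartheta^{(p)}$ act on pairwise disjoint variables and therefore commute. Expanding $\vartheta^m=\big(\sum_b\vartheta^{(b)}\big)^m$ by the multinomial theorem and applying it to $f_a$, every term carrying at least one factor $\vartheta^{(b)}$ with $b\neq a$ vanishes, leaving $\vartheta^m f_a=(\vartheta^{(a)})^m f_a=\rho^{(a)}_m$. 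Consequently $\vartheta^{k+i}\vec f=(\rho^{(1)}_{k+i},\dots,\rho^{(p)}_{k+i})$ and $\vartheta^{k+i}\vec f^{[r]}=(\rho^{(1)}_{k+i},\dots,\rho^{(r)}_{k+i})$, which are exactly the row segments appearing in the moment matrix.

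Finally I would carry out the bookkeeping that matches the truncation to the block structure of the Wronskian. Writing the size as $pm+r$ with $0\le r<p$, the first $pm$ columns of $\mathscr M^{[pm+r]}$ are the complete blocks $i=0,\dots,m-1$, whose row-$k$ entries are the segments $\vartheta^{k+i}\vec f$, while the last $r$ columns are the weights $a\in\{1,\dots,r\}$ of the block $i=m$, whose row-$k$ entries form the segment $\vartheta^{k+m}\vec f^{[r]}$. As $k$ runs over $0,\dots,pm+r-1$ this reproduces, row by row, the displayed array defining $\mathscr W_{pm+r}$, so the two matrices coincide and their determinants agree. I expect the only delicate point to be this combinatorial alignment in the non-divisible case $r\neq0$: one must check that the partial last block of columns lands precisely on the truncated vector $\vec f^{[r]}$ and that the row powers of $\vartheta$ in the moment matrix match those in the Wronskian, the divisible case $r=0$ then being the obvious specialization.
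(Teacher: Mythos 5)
Your proposal is correct and takes essentially the same approach as the paper: the paper's proof is just the one-line invocation of Theorem \ref{teo:moments}, and your argument is the explicit unpacking of that reference, namely that $\vartheta^{m}\rho^{(a)}_{0}=(\vartheta^{(a)})^{m}\rho^{(a)}_{0}=\rho^{(a)}_{m}$ because each series depends on a single variable, so the truncated moment matrix $\mathscr M^{[pn+r]}$ coincides entry by entry with the array defining $\mathscr W_{pn+r}$. The block-by-block bookkeeping you flag in the non-divisible case $r\neq 0$ is exactly the content implicitly absorbed into the definition of the extended Wronskian, so no gap remains.
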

\begin{proof}
	It follows from Theorem \ref{teo:moments}.
\end{proof}
\begin{pro}
    The following equation holds
    \begin{equation}
        \vartheta^{(a)}\mathscr M=\mathscr M(\Lambda^{(a)})^\top.
    \end{equation}
\end{pro}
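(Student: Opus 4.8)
The plan is to exploit the fact that $\vartheta^{(a)}=\eta^{(a)}\frac{\partial}{\partial\eta^{(a)}}$ acts on $\mathscr M$ only through the weights, since the monomial vectors $X(k)$ and $X^{(b)}(k)$ carry no dependence on the parameters $\eta^{(b)}$. Reading off the closed form of the weight recorded above, $w^{(b)}(k)$ depends on $\eta^{(a)}$ only through the factor $(\eta^{(a)})^k$ when $b=a$, and not at all when $b\neq a$. Hence I would first establish the elementary identity
\[
\vartheta^{(a)}w^{(b)}(k)=k\,\delta_{a,b}\,w^{(a)}(k),
\]
which is immediate from $\eta\frac{\partial}{\partial\eta}\eta^{k}=k\,\eta^{k}$.

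Applying this term by term to the defining series of $\mathscr M$, and pulling $\vartheta^{(a)}$ through the $\eta$-independent monomial vectors, gives
\[
\vartheta^{(a)}\mathscr M=\sum_{k=0}^{\infty}X(k)\Big(k\,w^{(a)}(k)\,X^{(a)}(k)\Big)^{\intercal}=\sum_{k=0}^{\infty}k\,w^{(a)}(k)\,X(k)\big(X^{(a)}(k)\big)^{\intercal}.
\]
For the right-hand side I would invoke the already recorded action $\Lambda^{(a)}X^{(b)}(x)=x\,X^{(a)}(x)\,\delta_{a,b}$. Transposing it yields $\big(X^{(b)}(k)\big)^{\intercal}(\Lambda^{(a)})^{\intercal}=k\,\delta_{a,b}\big(X^{(a)}(k)\big)^{\intercal}$, so that
\[
\mathscr M(\Lambda^{(a)})^{\intercal}=\sum_{k=0}^{\infty}X(k)\Big(\sum_{b=1}^{p}w^{(b)}(k)X^{(b)}(k)\Big)^{\intercal}(\Lambda^{(a)})^{\intercal}=\sum_{k=0}^{\infty}k\,w^{(a)}(k)\,X(k)\big(X^{(a)}(k)\big)^{\intercal}.
\]
The two expressions coincide, which proves the claim.

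As a cross-check, one can argue entrywise: from Theorem \ref{teo:moments} the block structure of the columns gives $\mathscr M_{i,pm+b-1}=\rho^{(b)}_{i+m}$ (with $b\in\{1,\dots,p\}$, $m\geq0$), the operator $\vartheta^{(a)}$ sends $\rho^{(b)}_{i+m}$ to $\delta_{a,b}\,\rho^{(a)}_{i+m+1}$ by \eqref{moments}, and right multiplication by $(\Lambda^{(a)})^{\intercal}$ shifts exactly the $a$-labelled columns up by one block; the two effects match. There is no serious obstacle here: the only points needing care are the bookkeeping of the block label $a$ (the Kronecker delta projecting onto the $a$-th weight) and the direction of the shift when transposing $\Lambda^{(a)}$. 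Once the weight identity $\vartheta^{(a)}w^{(b)}(k)=k\,\delta_{a,b}\,w^{(a)}(k)$ is in hand, the rest is direct substitution.
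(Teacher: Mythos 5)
Your proof is correct and takes essentially the same route as the paper: the paper's one-line proof (``it follows from \eqref{moments}'') is exactly your entrywise cross-check, and your main computation is the same underlying fact---that $\vartheta^{(a)}$ brings down a factor $k$ from $(\eta^{(a)})^k$ in $w^{(a)}(k)$ while right multiplication by $(\Lambda^{(a)})^\intercal$ shifts the $a$-labelled columns---written out directly from the defining series of $\mathscr M$. Nothing is missing; your version simply supplies the details the paper leaves implicit.
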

\begin{proof}
    It follows from \eqref{moments}.
\end{proof}

\subsection{Laguerre--Freud Matrix}
Next, we demonstrate that when a Pearson equation is fulfilled, there is a banded semi-infinite matrix that connects the shifted multiple orthogonal polynomials in the homogeneous lattice with the non-shifted ones; i.e., it models translations in the lattice.
\begin{teo}
Suppose the weights adhere to Pearson equations \eqref{eq:pearson}, where $\theta$ and $\sigma_1,\dots,\sigma_q$ are polynomials satisfying $\deg{\theta}=N$ and $\max\{\deg{\sigma_1},\dots,\deg{\sigma_q}\}=M$. Then, we establish the following relationship:
\begin{equation}\label{L-F}
	\Pi^-\theta(T)=\sum^p_{a=1}\sigma_a(T^{(a)})^\top(\Pi^{(a)+})^\top\eqcolon \Psi.
\end{equation}
This matrix $\Psi$ emerges as the Laguerre--Freud matrix. It exhibits a banded structure, characterized by \(pM\) subdiagonals and \(N\) superdiagonals, detailed as follows:
\begin{equation}\label{L-F diag}
	\Psi=(\Lambda^\top)^{pM}\psi^{(-pM)}+\dots+\psi^{(N)}\Lambda^N.
\end{equation}
\end{teo}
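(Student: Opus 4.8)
The identity \eqref{L-F} is precisely the statement of the preceding Proposition, so it may be invoked directly; the genuinely new content is the banded shape \eqref{L-F diag}. The plan is to extract the two one-sided bandwidth bounds from the two different expressions for $\Psi$: the count of superdiagonals will come from the left-hand factorization $\Psi = \Pi^-\theta(T)$, while the count of subdiagonals will come from the right-hand sum $\Psi = \sum_{a=1}^p \sigma_a(T^{(a)})^\top (\Pi^{(a)+})^\top$. The only tool needed throughout is the elementary fact that under matrix multiplication upper (resp. lower) bandwidths add, together with the observation that each product appearing below is computed entrywise by a finite sum, since every factor has its support bounded on one side by a fixed diagonal.

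For the superdiagonals I would argue from $\Psi = \Pi^-\theta(T)$. Since $\Pi^- = S L^- S^{-1}$ is a product of lower unitriangular matrices it is itself lower unitriangular, hence of upper bandwidth $0$. The matrix $T$ is the banded Hessenberg matrix $T = \Lambda + \alpha^{(0)} + \cdots + (\Lambda^\top)^p\alpha^{(p)}$, whose only superdiagonal is $\Lambda$; thus $T^k$ has upper bandwidth $k$ and, as $\deg\theta = N$, the polynomial $\theta(T)$ has upper bandwidth exactly $N$ with top superdiagonal a nonzero multiple of $\Lambda^N$. Multiplying on the left by the upper-bandwidth-$0$ matrix $\Pi^-$ leaves the upper bandwidth at most $N$, and since $\Pi^-$ has unit diagonal the $N$-th superdiagonal of $\Psi$ equals that of $\theta(T)$, namely $\psi^{(N)}\Lambda^N$ with $\psi^{(N)}$ determined by the (nonzero) leading coefficient of $\theta$. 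This pins down the top end of \eqref{L-F diag}.

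For the subdiagonals I would work with the right-hand expression. The crucial input is the upper bandwidth of $T^{(a)} = H^{-1}\tilde{S}\,\Lambda^{(a)}\,\tilde{S}^{-1}H$. Here $\Lambda^{(a)} = \Lambda^p I^{(a)}$ is supported on the $p$-th superdiagonal, so it has upper bandwidth $p$; conjugation by the lower triangular matrices $\tilde{S}$, $\tilde{S}^{-1}$ and the diagonal $H^{\pm 1}$ cannot raise the upper bandwidth, so $T^{(a)}$ has upper bandwidth $p$. Consequently $\sigma_a(T^{(a)})$, with $\deg\sigma_a \le M$, has upper bandwidth at most $pM$, and its transpose $\sigma_a(T^{(a)})^\top$ has lower bandwidth at most $pM$. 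Finally $\Pi^{(a)+} = H^{-1}\tilde{S} L^{(a)+}\tilde{S}^{-1}H$ is lower triangular (as $L^{(a)+}$ is), so $(\Pi^{(a)+})^\top$ is upper triangular, of lower bandwidth $0$; multiplying on the right by it preserves the lower bandwidth, and summing over $a$ does not increase it. Hence each term, and therefore $\Psi$, has lower bandwidth at most $pM$, giving the bottom end of \eqref{L-F diag}.

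Combining the two bounds yields $\Psi = (\Lambda^\top)^{pM}\psi^{(-pM)} + \cdots + \psi^{(N)}\Lambda^N$, as claimed. I expect the main obstacle to be the careful bookkeeping in the subdiagonal count: one must correctly identify $\Lambda^{(a)} = \Lambda^p I^{(a)}$ (consistent with $\sum_a T^{(a)} = T^\top = H^{-1}\tilde{S}\Lambda^p \tilde{S}^{-1}H$, using $\sum_a I^{(a)} = I$), verify that conjugation by lower triangular matrices genuinely preserves the upper bandwidth rather than merely the triangular shape, and check that all the semi-infinite products are entrywise finite so that the bandwidth-addition rule legitimately applies. The superdiagonal count, by contrast, is essentially immediate once $\Pi^-$ is recognized as lower unitriangular.
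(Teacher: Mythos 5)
Your proposal is correct and takes essentially the same approach as the paper: the identity \eqref{L-F} is the preceding Proposition (itself obtained from the symmetry equation \eqref{symMom} and the Gauss--Borel factorization), and the banded shape \eqref{L-F diag} is derived exactly as the paper indicates, reading the $N$ superdiagonals off $\Pi^-\theta(T)$ and the $pM$ subdiagonals off $\sum_{a=1}^p\sigma_a(T^{(a)})^\top(\Pi^{(a)+})^\top$. Your write-up merely supplies the triangularity and bandwidth bookkeeping that the paper's one-line proof leaves implicit.
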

\begin{proof}
	The relation \eqref{L-F} follows from the symmetry equation \eqref{symMom} and the Gauss--Borel factorization.
	The equation \eqref{L-F diag} follows from the degree of $\theta$ and maximun of the degrees of $\sigma$, that departing from \eqref{L-F} gives us the number of non-zero superdiagonals and non-zero subdiagonals, respectively.
\end{proof}
\begin{teo}
For $a\in\{1,\dots,p\}$,   the following equations are fulfilled
    \begin{align}
      \label{conn1}  \theta(x)B(x-1)&=\Psi B(x), &\Psi^\top A^{(a)}(x)&=\sigma_a(x)A^{(a)}(x+1).
    \end{align}
\end{teo}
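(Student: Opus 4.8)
The first identity is essentially immediate from the definition of $\Psi$ together with the spectral relation for the Hessenberg matrix, so I would dispatch it first. Starting from $\Psi=\Pi^-\theta(T)$ as in \eqref{L-F}, I would evaluate $\Psi B(x)$. Because $TB(x)=xB(x)$, polynomial functional calculus gives $\theta(T)B(x)=\theta(x)B(x)$; since $\theta(x)$ is then merely a scalar it commutes with $\Pi^-$, and the shift relation $\Pi^-B(x)=B(x-1)$ closes the computation, $\Psi B(x)=\theta(x)\Pi^-B(x)=\theta(x)B(x-1)$. No obstacle here.

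For the second identity the plan is to transpose the factorized form of $\Psi$ in \eqref{L-F}, which gives $\Psi^\top=\sum_{a=1}^p \Pi^{(a)+}\sigma_a(T^{(a)})$. The cleanest route is to dress everything back down to the bare shift level: inserting $\Pi^{(a)+}=H^{-1}\tilde S L^{(a)+}\tilde S^{-1}H$ and $T^{(a)}=H^{-1}\tilde S\Lambda^{(a)}\tilde S^{-1}H$ (so that $\sigma_a(T^{(a)})=H^{-1}\tilde S\,\sigma_a(\Lambda^{(a)})\,\tilde S^{-1}H$) and using $A^{(b)}(x)=H^{-1}\tilde S X^{(b)}(x)$, the conjugating factors $H^{-1}\tilde S$ and $\tilde S^{-1}H$ telescope, leaving
\[
\Psi^\top A^{(b)}(x)=H^{-1}\tilde S\Bigl(\sum_{a=1}^p L^{(a)+}\sigma_a(\Lambda^{(a)})\Bigr)X^{(b)}(x).
\]

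It then remains to evaluate $\sigma_a(\Lambda^{(a)})X^{(b)}(x)$, and this is where the single genuine subtlety—the main obstacle—appears. Iterating the given relation $\Lambda^{(a)}X^{(b)}(x)=x\,\delta_{a,b}X^{(a)}(x)$ yields $(\Lambda^{(a)})^kX^{(b)}(x)=x^k\delta_{a,b}X^{(a)}(x)$ for $k\ge 1$, so $\sigma_a(\Lambda^{(a)})X^{(b)}(x)$ equals $\sigma_b(x)X^{(b)}(x)$ when $a=b$, but only $\sigma_a(0)X^{(b)}(x)$ when $a\neq b$. The constant term $\sigma_a(0)=\eta^{(a)}b_1^{(a)}\cdots b_{M^{(a)}}^{(a)}$ is generically nonzero, so the eigenvalue relation by itself does \emph{not} eliminate the off-diagonal ($a\neq b$) contributions; this is the point one must argue carefully. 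What saves the computation is the disjoint residue-class structure of the canonical-basis monomial vectors: $X^{(b)}(x)$ is supported on the entries $\equiv b\pmod p$, whereas $L^{(a)+}=I^{(a)}+(\Lambda^\top)^pD_a+\cdots$ both reads and writes only entries $\equiv a\pmod p$, so that $L^{(a)+}X^{(b)}(x)=0$ whenever $a\neq b$. Hence every leftover constant-term piece is annihilated upon applying $L^{(a)+}$, only the $a=b$ term survives as $L^{(b)+}\sigma_b(x)X^{(b)}(x)=\sigma_b(x)X^{(b)}(x+1)$, and dressing back up gives $\Psi^\top A^{(b)}(x)=\sigma_b(x)H^{-1}\tilde S X^{(b)}(x+1)=\sigma_b(x)A^{(b)}(x+1)$, as claimed.
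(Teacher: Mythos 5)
Your proof is correct, and for the type II identity it is literally the paper's computation: $\Psi B(x)=\Pi^-\theta(T)B(x)=\theta(x)\Pi^-B(x)=\theta(x)B(x-1)$. For the type I identity you follow the same basic route as the paper (transpose the factorization to get $\Psi^\top=\sum_{b}\Pi^{(b)+}\sigma_b(T^{(b)})$ and apply it to the type I vector), but your treatment is genuinely more complete at the one delicate point. The paper's own proof is a one-line collapse, $\sum_b\Pi^{(b)+}\sigma_b(T^{(b)})A^{(a)}(x)=\Pi^{(a)+}\sigma_a(x)A^{(a)}(x)$ (written, moreover, with the summation index colliding with the fixed index $a$), i.e.\ it silently discards all cross terms $b\neq a$. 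As you correctly observe, those terms are \emph{not} killed by the eigenvalue relation alone: since $T^{(b)}A^{(a)}=x\,\delta_{a,b}A^{(a)}$, one gets $\sigma_b(T^{(b)})A^{(a)}=\sigma_b(0)A^{(a)}$ for $b\neq a$, and the constant term $\sigma_b(0)$ is generically nonzero. Your resolution — undressing by $H^{-1}\tilde{S}$ so that the question becomes whether $L^{(b)+}X^{(a)}=0$ for $b\neq a$, and then invoking the residue-class support of the partial Pascal matrices (the expansion $L^{(b)+}=I^{(b)}+(\Lambda^\top)^pD_b+\cdots$ shows $L^{(b)+}$ reads and writes only entries in the residue class of $b$ modulo $p$, while $X^{(a)}$ is supported on the class of $a$) — supplies exactly the missing lemma; equivalently, at the dressed level, $\Pi^{(b)+}A^{(a)}=H^{-1}\tilde{S}L^{(b)+}X^{(a)}=0$ for $b\neq a$, which is what legitimizes the paper's collapse. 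So what your approach buys is rigor at precisely the step you flag as the main obstacle: the paper's argument is shorter but has a gap there, and your version closes it while otherwise coinciding with it.
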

\begin{proof}
    For the type I polynomials we can write:
    \begin{equation*}
        \Psi^\top A^{(a)}(x)=\sum^q_{a=1}\Pi^{(a)+}\sigma_a(T^{(a)})A^{(a)}(x)=\Pi^{(a)+}\sigma_a(x)A^{(a)}(x)=\sigma_a(x)A^{(a)}(x+1).
    \end{equation*}
    For the type II polynomials we have that
    \begin{equation*}
        \Psi B(x)=\Pi^-\theta(T)B(x)=\Pi^-\theta(x)B(x)=\theta(x)B(x-1).
    \end{equation*}
\end{proof}
The Laguerre--Freud and the recurrence matrices are closely connected:
\begin{pro}
    The following compatibility formula holds
    \begin{equation}
        [\Psi,T]=\Psi.
    \end{equation}
\end{pro}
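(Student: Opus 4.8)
The plan is to prove the identity by letting both sides act on the type II polynomial column $B(x)$ and exploiting the two eigenrelations already established: the recurrence $TB(x)=xB(x)$ and the connection formula $\Psi B(x)=\theta(x)B(x-1)$ recorded in \eqref{conn1}. First I would note that since $T$ (a banded Hessenberg matrix with $p+2$ diagonals) and $\Psi$ (banded by \eqref{L-F diag}) both have finite bandwidth, the products $\Psi T$ and $T\Psi$ are well-defined semi-infinite matrices, so $[\Psi,T]$ makes sense and acts entrywise on $B(x)$ through finite sums.

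The computational core is to evaluate each ordering. On one hand, $\Psi T B(x)=\Psi\bigl(xB(x)\bigr)=x\,\Psi B(x)=x\,\theta(x)B(x-1)$, where the scalar $x$ commutes freely. On the other hand, $T\Psi B(x)=T\bigl(\theta(x)B(x-1)\bigr)=\theta(x)\,TB(x-1)$; here the scalar $\theta(x)$ passes through the constant matrix $T$, and since $TB(z)=zB(z)$ is a polynomial identity in the argument $z$, evaluating at $z=x-1$ yields $TB(x-1)=(x-1)B(x-1)$. Hence $T\Psi B(x)=\theta(x)(x-1)B(x-1)$. Subtracting, $[\Psi,T]B(x)=x\theta(x)B(x-1)-(x-1)\theta(x)B(x-1)=\theta(x)B(x-1)=\Psi B(x)$, the last equality being \eqref{conn1} once more.

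To finish, set $M\coloneq[\Psi,T]-\Psi$, so that $MB(x)=0$ identically in $x$. The $n$-th row reads $\sum_m M_{nm}B_m(x)=0$ for all $x$; because $\deg B_n=n$, the family $\{B_m\}$ consists of polynomials of distinct degrees and is therefore a basis of the polynomials, forcing $M_{nm}=0$ for every $n,m$. Thus $M=0$, i.e.\ $[\Psi,T]=\Psi$.

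The only delicate point, and the step I would flag as the main obstacle, is the legitimacy of evaluating the recurrence at the shifted argument $x-1$; this is justified precisely because $T$ is a constant matrix while $B$ is a polynomial column, so $TB(z)=zB(z)$ holds as an identity of polynomials in $z$ and may be specialized at $z=x-1$. I would also mention a purely algebraic alternative that bypasses the basis argument: since $\theta(T)$ commutes with $T$, one has $[\Psi,T]=[\Pi^-\theta(T),T]=[\Pi^-,T]\theta(T)$, so it suffices to prove $[\Pi^-,T]=\Pi^-$, which follows by the identical computation using $\Pi^-B(x)=B(x-1)$ in place of $\Psi B(x)=\theta(x)B(x-1)$.
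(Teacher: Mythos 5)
Your proof is correct and follows essentially the same route as the paper: act with both orderings $\Psi T$ and $T\Psi$ on $B(x)$, use $TB(x)=xB(x)$ and $\Psi B(x)=\theta(x)B(x-1)$ to get $x\theta(x)B(x-1)$ and $(x-1)\theta(x)B(x-1)$ respectively, and subtract. Your added justification that $[\Psi,T]B(x)=\Psi B(x)$ for all $x$ forces the matrix identity (via the distinct degrees of the $B_n$) is a detail the paper leaves implicit, and is a welcome tightening rather than a departure.
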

\begin{proof}
    Using \eqref{conn1} we obtain the following two equations:
    \begin{equation*}
        T\Psi B(x)=T\theta(x)B(x-1)=\theta(x)(x-1)B(x-1)
    \end{equation*}
    and
    \begin{equation*}
        \Psi TB(x)=\Psi xB(x)=x\theta(x)B(x-1),
    \end{equation*}
    with both equations above we have that
    \begin{equation*}
        [\Psi,T]B(x)=\theta(x)B(x-1)=\Psi B(x).
    \end{equation*}
\end{proof}
\subsection{Contiguous hypergeometric relations and connection matrices}
Generalized hypergeometric series fulfill:
\begin{equation*}
    \biggl(\eta\frac{d}{d\eta}+b_i\biggr){}_MF_N(b_1,\dots,b_M;c_1,\dots,c_N;\eta)=b_i{}_MF_N(b_1,\dots,b_i+1,\dots,b_M;c_1,\dots,C_N;\eta)
\end{equation*}
and
\begin{equation*}
    \biggl(\eta\frac{d}{d\eta}+c_j-1\biggr){}_MF_N(b_1,\dots,b_M;c_1,\dots,c_N;\eta)=b_i{}_MF_N(b_1,\dots,b_M;c_1,\dots,c_j-1,\dots,C_N;\eta)
\end{equation*}
for $i\in\{1,\dots,M\}$ and $j\in\{1,\dots,N\}.$

We will use the following notation:    ${}_i\Theta^{(a)}$ corresponds to the shift $b_i^{(a)} \rightarrow b_i^{(a)}+1$ and $\Theta_j$ corresponds to the shift $c_j \rightarrow c_j-1$.

These relations imply for the moment matrix:
\begin{teo}
   For $a\in\{1,\dots,p\}$, the following equations for the moment matrix are fulfilled
    \begin{align}
       \label{hypRelM1} \mathscr M(\Lambda^{(a)})^\top+b_i^{(a)}\mathscr M=b_i^{(a)}{}_i\Theta^{(a)}\mathscr M,\\
       \label{hypRelM2} \mathscr M(\Lambda^q)^\top+(c_j-1)\mathscr M=(c_j-1)\Theta_j\mathscr M.
    \end{align}
\end{teo}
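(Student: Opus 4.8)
The plan is to reduce both matrix identities to the scalar contiguity relations for ${}_MF_N$ by exploiting the already established Proposition $\vartheta^{(a)}\mathscr M=\mathscr M(\Lambda^{(a)})^\top$ together with the entrywise description of the moments $\rho^{(a)}_m=(\vartheta^{(a)})^m\rho^{(a)}_0$ coming from \eqref{moments}. The guiding observation is that each operator $\vartheta^{(a)}=\eta^{(a)}\partial_{\eta^{(a)}}$ acts nontrivially only on the columns of $\mathscr M$ carrying the weight index $a$, annihilating $\rho^{(b)}_m$ for $b\neq a$, while the parameter-shift operators ${}_i\Theta^{(a)}$ and $\Theta_j$ commute with every $\vartheta^{(a)}$, since shifting a hypergeometric parameter is independent of differentiating in $\eta^{(a)}$.

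For \eqref{hypRelM1} I would first use $\vartheta^{(a)}\mathscr M=\mathscr M(\Lambda^{(a)})^\top$ to rewrite the left-hand side as the entrywise action of $\vartheta^{(a)}+b_i^{(a)}$ on $\mathscr M$, after which it suffices to check the identity column by column. On a column carrying weight $a$, whose entries are $\rho^{(a)}_m=(\vartheta^{(a)})^m\rho^{(a)}_0$, I would commute $(\vartheta^{(a)})^m$ past $\vartheta^{(a)}+b_i^{(a)}$ and invoke the first contiguity relation $(\vartheta^{(a)}+b_i^{(a)})\rho^{(a)}_0=b_i^{(a)}\,{}_i\Theta^{(a)}\rho^{(a)}_0$, so that the column becomes $b_i^{(a)}(\vartheta^{(a)})^m\,{}_i\Theta^{(a)}\rho^{(a)}_0=b_i^{(a)}\,{}_i\Theta^{(a)}\rho^{(a)}_m$, which is precisely the corresponding column of $b_i^{(a)}\,{}_i\Theta^{(a)}\mathscr M$. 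On a column carrying weight $b\neq a$ the term $\mathscr M(\Lambda^{(a)})^\top$ vanishes because $\vartheta^{(a)}\rho^{(b)}_m=0$, while ${}_i\Theta^{(a)}$ leaves $\rho^{(b)}_m$ untouched, so both sides reduce to $b_i^{(a)}\rho^{(b)}_m$. Assembling the columns yields \eqref{hypRelM1}.

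For \eqref{hypRelM2} I would first sum the Proposition over $a$: since $\sum_{a=1}^p I^{(a)}=I$ one has $\sum_{a=1}^p\Lambda^{(a)}=\Lambda^q$, hence $\vartheta\mathscr M=\sum_{a=1}^p\vartheta^{(a)}\mathscr M=\mathscr M(\Lambda^q)^\top$. This recasts the left-hand side as the entrywise action of $\vartheta+c_j-1$ on $\mathscr M$. Because $c_j$ is a denominator parameter shared by every weight, I would again argue column by column: on a column of weight $b$ the operator $\vartheta$ collapses to $\vartheta^{(b)}$, and the second contiguity relation gives $(\vartheta^{(b)}+c_j-1)\rho^{(b)}_0=(c_j-1)\Theta_j\rho^{(b)}_0$; commuting $(\vartheta^{(b)})^m$ through produces $(c_j-1)\Theta_j\rho^{(b)}_m$, the matching column of $(c_j-1)\Theta_j\mathscr M$. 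Collecting columns gives \eqref{hypRelM2}.

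The routine part is the scalar contiguity computation for ${}_MF_N$; the only genuine subtlety, and the step I would treat most carefully, is the bookkeeping across the two kinds of columns. One must track that $\vartheta^{(a)}$ annihilates the off-weight columns whereas the scalar shifts $b_i^{(a)}$ and $c_j-1$ act on all of them, and then verify that this mismatch is exactly absorbed by the fact that ${}_i\Theta^{(a)}$ acts as the identity on those columns (respectively that $\Theta_j$ acts uniformly), all of which hinges on the $p$-Hankel column structure of $\mathscr M$ displayed in Theorem \ref{teo:moments}.
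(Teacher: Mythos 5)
Your proposal is correct and follows essentially the same route as the paper: both reduce the matrix identities to the scalar contiguity relations via the proposition $\vartheta^{(a)}\mathscr M=\mathscr M(\Lambda^{(a)})^\top$ (summed over $a$ for the $c_j$-relation), checking the action of $\vartheta^{(a)}+b_i^{(a)}$, respectively $\vartheta+c_j-1$, column by column on the own-weight moments $\rho^{(a)}_m=(\vartheta^{(a)})^m\rho^{(a)}_0$ versus the off-weight ones. If anything, your explicit handling of the off-weight columns in the second identity (where $\vartheta$ collapses to $\vartheta^{(b)}$ and $\Theta_j$ acts on every column) is stated more carefully than in the paper's own write-up.
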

\begin{proof}
The equations \eqref{hypRelM1} and \eqref{hypRelM2} are derived from the contiguous hypergeometric relations for generalized hypergeometric functions. To prove \eqref{hypRelM1}, we first analyze how the operator \((\vartheta_{\eta^{(a)}} + b_i^{(a)})\) acts on different entries of the moment matrix. There are two scenarios: one where it acts on \(\rho^a_m\) and another where it acts on \(\rho^k_m\) with \(k \neq a\).

Firstly, we have:
\[
(\vartheta_{\eta_a} + b_i^{(a)})\rho^a_m = (\vartheta_{\eta_a} + b_i^{(a)})(\vartheta_{\eta_a}^m \rho_0^a) = \vartheta^m_{\eta_a}(\vartheta_{\eta_a} + b_i^{(a)})\rho^a_0 = \vartheta^m_{\eta_a}(b_i^{(a)} {}_i\Theta^{(a)} \rho^a_0) = b_i^{(a)} {}_i\Theta^{(a)} \rho^a_m,
\]
and
\[
(\vartheta_{\eta_a} + b_i^{(a)})\rho^k_m = b_i^{(a)} \rho^k_m = b_i^{(a)}  {}_i\Theta^{(a)} \rho^k_m.
\]

From these results, we see how the operator acts on the moment matrix:
\[
(\vartheta_{\eta_a} + b_i^{(a)}) \mathscr{M} = \mathscr{M} (\Lambda^{(a)})^\top + b_i^{(a)} \mathscr{M} = b_i^{(a)}  {}_i\Theta^{(a)} \mathscr{M}.
\]

To prove \eqref{hypRelM2}, we examine how the operator \((\vartheta_{\eta_a} + c_j - 1)\) acts on different elements of the moment matrix:
\[
(\vartheta_{\eta_a} + c_j - 1)\rho^a_m = (c_j - 1) \Theta_j \rho^a_m, \quad \text{and} \quad (\vartheta_{\eta_a} + c_j - 1)\rho^k_m = (c_j - 1) \Theta_j \rho^k_m.
\]

Thus, we can write:
\[
\left( \sum_{i=1}^p \vartheta_{\eta_i} + c_j - 1 \right) \rho^a_m = (c_j - 1) \Theta_j \rho^a_m,
\]
and when acting on the moment matrix, we obtain:
\[
\left( \sum_{i=1}^q \vartheta_{\eta_i} + c_j - 1 \right) \mathscr{M} = \mathscr{M} \left( \sum_{l=1}^q \Lambda_l^\top \right) + (c_j - 1) \mathscr{M} = (c_j - 1) \Theta_j \mathscr{M}.
\]
\end{proof}
\begin{defi}
    For $i\in\{1,\dots,M^{(a)}\}$, $j\in\{1,\dots,N\}$ and $a\in\{1,\dots,p\}$, the connection matrices are defined as
    \begin{align}{}_i\omega^{(a)}&=({}_i\Theta^{(a)}H)^{-1}({}_i\Theta^{(a)}\tilde{S})(\Lambda^{(a)}+b_i^{(a)})\tilde{S}^{-1}H,\\
    {}_i\Omega^{(a)}&=S({}_i\Theta^{(a)}S)^{-1}, \\\omega_i&=(\Theta_iH)^{-1}(\Theta_i\tilde{S})(\Lambda^{(q)}+c_j-1)\tilde{S}^{-1}H,\\
    \Omega_i&=S(\Theta_iS)^{-1}.\end{align}
\end{defi}
\begin{teo}
    The connection matrices fulfill the following relations
    \begin{align}
       \label{connRel1} ({}_i\omega^{(a)})^\top&=b_i^{(a)}{}_i\Omega^{(a)}, \\ \label{connRel2} \omega_j^\top&=(c_j-1)\Omega_j.
    \end{align} 
\end{teo}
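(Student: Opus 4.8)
The plan is to feed the Gauss--Borel factorization $\mathscr{M}=S^{-1}H\tilde S^{-\top}$ into the contiguity identities \eqref{hypRelM1} and \eqref{hypRelM2} and then strip off the outer factors so that only the connection matrices remain. The single structural input needed is that the parameter shifts act as substitutions $b_i^{(a)}\mapsto b_i^{(a)}+1$ and $c_j\mapsto c_j-1$, hence distribute over products and inverses and preserve the lower-unitriangular/diagonal shape of the factors. In particular the shifted moment matrix factorizes as ${}_i\Theta^{(a)}\mathscr M=({}_i\Theta^{(a)}S)^{-1}({}_i\Theta^{(a)}H)({}_i\Theta^{(a)}\tilde S)^{-\top}$, and analogously for $\Theta_j$, simply by applying the shift to each factor of $S^{-1}H\tilde S^{-\top}$.

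For the first relation I would start from \eqref{hypRelM1}, rewritten as $\mathscr M\big((\Lambda^{(a)})^\top+b_i^{(a)}\big)=b_i^{(a)}\,{}_i\Theta^{(a)}\mathscr M$, and substitute both factorizations to obtain
\[
S^{-1}H\tilde S^{-\top}\big(\Lambda^{(a)}+b_i^{(a)}\big)^\top=b_i^{(a)}\,({}_i\Theta^{(a)}S)^{-1}({}_i\Theta^{(a)}H)({}_i\Theta^{(a)}\tilde S)^{-\top}.
\]
Left-multiplying by ${}_i\Theta^{(a)}S$ and using $({}_i\Theta^{(a)}S)S^{-1}=({}_i\Omega^{(a)})^{-1}$, which is exactly the definition ${}_i\Omega^{(a)}=S({}_i\Theta^{(a)}S)^{-1}$, this becomes
\[
({}_i\Omega^{(a)})^{-1}H\tilde S^{-\top}\big(\Lambda^{(a)}+b_i^{(a)}\big)^\top=b_i^{(a)}\,({}_i\Theta^{(a)}H)({}_i\Theta^{(a)}\tilde S)^{-\top}.
\]
I would then right-multiply by $({}_i\Theta^{(a)}\tilde S)^\top({}_i\Theta^{(a)}H)^{-1}$. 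On the left, the cluster $H\tilde S^{-\top}(\Lambda^{(a)}+b_i^{(a)})^\top({}_i\Theta^{(a)}\tilde S)^\top({}_i\Theta^{(a)}H)^{-1}$ is precisely the transpose of the definition ${}_i\omega^{(a)}=({}_i\Theta^{(a)}H)^{-1}({}_i\Theta^{(a)}\tilde S)(\Lambda^{(a)}+b_i^{(a)})\tilde S^{-1}H$, so the left side collapses to $({}_i\Omega^{(a)})^{-1}({}_i\omega^{(a)})^\top$. On the right, $({}_i\Theta^{(a)}\tilde S)^{-\top}({}_i\Theta^{(a)}\tilde S)^\top=I$ and the two diagonal factors ${}_i\Theta^{(a)}H$ cancel, leaving the scalar $b_i^{(a)}$. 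Thus $({}_i\Omega^{(a)})^{-1}({}_i\omega^{(a)})^\top=b_i^{(a)}$, which rearranges to \eqref{connRel1}.

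The second relation \eqref{connRel2} follows by the verbatim argument applied to \eqref{hypRelM2}: substitute the $\Theta_j$-shifted factorization, peel off $\Omega_j=S(\Theta_j S)^{-1}$ on the left, multiply by $(\Theta_j\tilde S)^\top(\Theta_j H)^{-1}$ on the right, recognize the resulting left cluster as $\omega_j^\top$, and cancel via $(\Theta_j\tilde S)^{-\top}(\Theta_j\tilde S)^\top=I$ together with the diagonal $\Theta_j H$ factors, so that $\Omega_j^{-1}\omega_j^\top=(c_j-1)$. The computation is entirely mechanical; the one point demanding care — and the main obstacle — is the correct transpose bookkeeping (reversing the order in each triple product and noting that $H$ and the $\Theta$-shifted $H$ are symmetric) combined with the legitimacy of distributing the parameter shift across the factorized moment matrix. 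Once the shifted factorization is taken for granted, the remainder is elementary transpose algebra.
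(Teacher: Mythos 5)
Your proposal is correct and follows essentially the same route as the paper: both start from the contiguity relations \eqref{hypRelM1}--\eqref{hypRelM2}, substitute the Gauss--Borel factorization $\mathscr M=S^{-1}H\tilde S^{-\top}$ together with its parameter-shifted counterpart ${}_i\Theta^{(a)}\mathscr M=({}_i\Theta^{(a)}S)^{-1}({}_i\Theta^{(a)}H)({}_i\Theta^{(a)}\tilde S)^{-\top}$, and then peel off triangular and diagonal factors until only the connection matrices remain. Your write-up is in fact more careful than the paper's (explicitly justifying the shifted factorization and the transpose/diagonal-symmetry bookkeeping), but the underlying argument is identical.
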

\begin{proof}
Starting from \eqref{hypRelM1} and using the Gauss--Borel factorization for the moment matrix, we can prove \eqref{connRel1}. We have:
\[
S^{-1} H \tilde{S}^{-\top} (\Lambda^{(a)})^\top + b_i^{(a)} S^{-1} H \tilde{S}^{-\top} = b_i^{(a)} {}_i\Theta^{(a)} (S^{-1} H \tilde{S}^{-\top}).
\]

Multiplying the equation above by \(S\) on the right and by \(\tilde{S}^\top H^{-1}\) on the left, we obtain:
\[
\tilde{S}^\top H^{-1} S^{-1} H \tilde{S}^{-\top} (\Lambda^{(a)})^\top + b_i^{(a)} \tilde{S}^\top H^{-1} S^{-1} H \tilde{S}^{-\top} = b_i^{(a)} \tilde{S}^\top H^{-1} {}_i\Theta^{(a)} (S^{-1} H \tilde{S}^{-\top}).
\]
This simplifies to:
\[
(H \tilde{S}^{-\top}) ((\Lambda^{(a)})^\top + b_i^{(a)}) = b_i^{(a)} {}_i\Theta^{(a)}.
\]
Thus,
\[
(H \tilde{S}^{-\top}) (\Lambda^{(a)})^\top + b_i^{(a)} H \tilde{S}^{-\top} = b_i^{(a)} {}_i\Theta^{(a)} H \tilde{S}^{-\top}.
\]
This confirms \eqref{connRel1}.

Similarly, equation \eqref{connRel2} can be proved starting from \eqref{hypRelM2} using an analogous process:
\[
\left( \sum_{i=1}^q \vartheta_{\eta_i} + c_j - 1 \right) \mathscr{M} = \mathscr{M} \left( \sum_{l=1}^q \Lambda_l^\top \right) + (c_j - 1) \mathscr{M} = (c_j - 1) \Theta_j \mathscr{M}.
\]
Following a similar factorization and multiplication approach as above, we will obtain:
\[
\left( H \tilde{S}^{-\top} \right) \left( \sum_{i=1}^q \Lambda_i^\top + c_j - 1 \right) = (c_j - 1) \Theta_j \left( H \tilde{S}^{-\top} \right).
\]
\end{proof}
\begin{pro}
Connection matrices have a triangular structure with \( p \) non-zero subdiagonals:
\begin{align}
	\label{connEst1} {}_i\omega^{(a)} &= b_i^{(a)}I + ({}_i\omega^{(a)})^{[1]}\Lambda + \dots + ({}_i\omega^{(a)})^{[p]}\Lambda^p, \\
	\label{connEst2}{}_i\Omega^{(a)} &= I + \Lambda^\top ({}_i\Omega^{(a)})^{[1]} + \dots + (\Lambda^\top)^p ({}_i\Omega^{(a)})^{[p]},
\end{align}
with 
\begin{align*}
	({}_i\omega^{(a)})^{[p]} &= I^{(a)} \mathfrak{a}_{-}^p H {}_i\Theta^{(a)} H^{-1}, & ({}_i\Omega^{(a)})^{[m]} &= \sum_{l=0}^m \mathfrak{a}_{-}^{m-l} S^{[l]} {}_i\Theta^{(a)} S^{[l-m]}, & ({}_i\omega^{(a)})^{[m]} &= b_i^{(a)} ({}_i\Omega^{(a)})^{[m]}.
\end{align*}
\end{pro}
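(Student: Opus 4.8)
The plan is to treat the two claimed expansions \eqref{connEst1} and \eqref{connEst2} as transposes of one another through \eqref{connRel1}, deriving the coefficient formula on the $S$-side (where unipotence is transparent) and the band bound on the $\tilde S$-side (where the raising is capped by a single factor), and then gluing the two pieces together.

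First I would analyse ${}_i\Omega^{(a)}=S({}_i\Theta^{(a)}S)^{-1}$. Since the shift ${}_i\Theta^{(a)}$ (acting on the parameter $b_i^{(a)}$) commutes with inversion and preserves the lower unitriangular form, both $S$ and ${}_i\Theta^{(a)}S$ are lower unitriangular, and hence so is their product. Inserting the series $S=\sum_{k\ge 0}(\Lambda^\top)^kS^{[k]}$ and $({}_i\Theta^{(a)}S)^{-1}={}_i\Theta^{(a)}(S^{-1})=\sum_{l\ge0}(\Lambda^\top)^l\,{}_i\Theta^{(a)}S^{[-l]}$ and collecting powers of $\Lambda^\top$ with the commutation rule $D(\Lambda^\top)^l=(\Lambda^\top)^l\mathfrak a_-^lD$ valid for diagonal $D$, the coefficient of $(\Lambda^\top)^m$ comes out as $\sum_{l=0}^m(\mathfrak a_-^{m-l}S^{[l]})({}_i\Theta^{(a)}S^{[l-m]})$, which is exactly the stated $({}_i\Omega^{(a)})^{[m]}$; at this stage this is only a formal, a priori non-terminating, lower expansion.

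Next I would pass to ${}_i\omega^{(a)}$. Because each $({}_i\Omega^{(a)})^{[m]}$ is a diagonal, hence symmetric, matrix, transposing the previous expansion and using \eqref{connRel1} in the form ${}_i\omega^{(a)}=b_i^{(a)}({}_i\Omega^{(a)})^\top$ produces ${}_i\omega^{(a)}=b_i^{(a)}I+\sum_{m\ge1}b_i^{(a)}({}_i\Omega^{(a)})^{[m]}\Lambda^m$, giving simultaneously the upper unipotent shape and the identity $({}_i\omega^{(a)})^{[m]}=b_i^{(a)}({}_i\Omega^{(a)})^{[m]}$. To close the argument I would read the band off the defining expression ${}_i\omega^{(a)}=({}_i\Theta^{(a)}H)^{-1}({}_i\Theta^{(a)}\tilde S)(\Lambda^{(a)}+b_i^{(a)})\tilde S^{-1}H$: the only factor that raises indices is $\Lambda^{(a)}=\Lambda^pI^{(a)}$, which raises by exactly $p$, while ${}_i\Theta^{(a)}\tilde S$ and $\tilde S^{-1}$ are lower triangular and the $H$-factors diagonal, so none of them can increase the number of superdiagonals. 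Hence ${}_i\omega^{(a)}$ has at most $p$ superdiagonals, which is \eqref{connEst1}; transporting this bound back through \eqref{connRel1} forces $({}_i\Omega^{(a)})^{[m]}=0$ for $m>p$ and thereby truncates the $S$-side expansion to $p$ subdiagonals, yielding \eqref{connEst2}.

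Finally, for the leading coefficient I would extract the top power $\Lambda^p$ from the same defining expression: only the $\Lambda^{(a)}$ summand can reach $\Lambda^p$, and at that order only the identity parts of ${}_i\Theta^{(a)}\tilde S$ and $\tilde S^{-1}$ survive, leaving $({}_i\Theta^{(a)}H)^{-1}\Lambda^pI^{(a)}H$. Moving the diagonal factor $I^{(a)}H$ to the left of $\Lambda^p$ with $\Lambda^pD=(\mathfrak a_-^pD)\Lambda^p$, and using that the diagonal factors commute and that $\mathfrak a_-^p$ distributes over products, gives $({}_i\omega^{(a)})^{[p]}=I^{(a)}\mathfrak a_-^pH\,{}_i\Theta^{(a)}H^{-1}$, and the relation from the previous paragraph then supplies $({}_i\Omega^{(a)})^{[p]}=\tfrac1{b_i^{(a)}}({}_i\omega^{(a)})^{[p]}$. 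I expect the genuine obstacle to be the truncation at $m=p$: it is invisible from the $S$-side series, whose terms do not manifestly vanish, and must instead be imported from the $\tilde S$-side, where the single raising factor $\Lambda^{(a)}$ caps the band; the remainder is careful bookkeeping with $\mathfrak a_\pm$ and the shift commutations past $\Lambda$ and $\Lambda^\top$.
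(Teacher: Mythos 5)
Your proposal is correct and follows essentially the same route as the paper's proof: expand both defining expressions ($S$-side for ${}_i\Omega^{(a)}$, $\tilde S$-side for ${}_i\omega^{(a)}$) into diagonal-times-shift series and glue them through \eqref{connRel1}. If anything, your write-up is more careful than the paper's about the one genuinely non-obvious point — that the a priori infinite $S$-side expansion truncates at $p$ because the band bound imported from the $\tilde S$-side (where $\Lambda^{(a)}=\Lambda^p I^{(a)}$ is the sole raising factor) transfers back through the transpose relation.
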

\begin{proof}
Equations \eqref{connEst1} and \eqref{connEst2} can be proved starting from \eqref{connRel1} and using the definitions of the connection matrices. We have:
\[
({}_i\omega^{(a)})^\top = H \left(I - \tilde{S}^{[1]}\Lambda + \tilde{S}^{[2]}\Lambda^2 - \dots \right] \left[(\Lambda^q)^\top + b_i^{(a)}\right] {}_i\Theta^{(a)} \left[I + \tilde{S}^{[1]}\Lambda + \tilde{S}^{[2]}\Lambda^2 + \dots \right] H^{-1}.
\]
Expanding this, we get:
\[
({}_i\omega^{(a)})^\top = H \left[(\Lambda^\top)^p I^{(a)} \mathfrak{a}_{-}^p H {}_i\Theta^{(a)} H^{-1} + \dots \right].
\]

For the second connection matrix:
\[
{}_i\Omega^{(a)} = I + \dots + (\Lambda^\top)^m \left[\sum_{l=0}^m \mathfrak{a}_{-}^{m-l} S^{[l]} {}_i\Theta^{(a)} S^{[l-m]}\right].
\]

Therefore, the expanded forms show that:
\[
{}_i\omega^{(a)} = b_i^{(a)} I + ({}_i\omega^{(a)})^{[1]} \Lambda + \dots + ({}_i\omega^{(a)})^{[p]} \Lambda^p,
\]
where 
\[
({}_i\omega^{(a)})^{[p]} = I^{(a)} \mathfrak{a}_{-}^p H {}_i\Theta^{(a)} H^{-1},
\]
and
\[
{}_i\Omega^{(a)} = I + \Lambda^\top ({}_i\Omega^{(a)})^{[1]} + \dots + (\Lambda^\top)^p ({}_i\Omega^{(a)})^{[p]},
\]
where 
\[
({}_i\Omega^{(a)})^{[m]} = \sum_{l=0}^m \mathfrak{a}_{-}^{m-l} S^{[l]} {}_i\Theta^{(a)} S^{[l-m]}.
\]

Additionally, the relationship between these matrices is given by:
\[
({}_i\omega^{(a)})^{[m]} = b_i^{(a)} ({}_i\Omega^{(a)})^{[m]}.
\]
\end{proof}
The type II and type I multiple orthogonal polynomials satisfy:

\begin{pro}
    The following equations hold
    \begin{align}
      \label{co-v:1}{}i\omega^{(a)}A^{(a)}(z)&=(z+b_i^{(a)})({}_i\Theta^{(a)}A^{(a)}(z))\\
      \label{co-v:2}{}_i\Omega^{(a)}{}_i\Theta^{(a)}B(z)&=B(z)\\
      \label{co-v:3}\omega_iA^{(a)}(z)&=(z+c_i-1)(\Theta_iA^{(a)}(z)\\
      \label{co-v:4}\Omega_i\Theta_iB(z)&=B(z).
    \end{align}
\end{pro}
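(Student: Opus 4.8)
The plan is to derive each of the four identities \eqref{co-v:1}--\eqref{co-v:4} by the same strategy used throughout this section: translate the moment-matrix relations \eqref{hypRelM1}--\eqref{hypRelM2} into statements about the orthogonal polynomials by applying the Gauss--Borel factorization $\mathscr M = S^{-1}H\tilde S^{-\intercal}$ together with the definitions $A^{(a)} = H^{-1}\tilde S X^{(a)}$ and $B = SX$. The underlying analytic input is the pair of contiguity relations for the hypergeometric series, which at the level of the monomial vectors read $(\vartheta^{(a)} + b_i^{(a)})$ acting as multiplication by $(z+b_i^{(a)})$ under the identification $\Lambda^{(a)}X^{(b)} = zX^{(a)}\delta_{a,b}$, and similarly $(\vartheta + c_i - 1)$ acting as $(z+c_i-1)$ via $\Lambda^q X = zX$.

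First I would establish \eqref{co-v:1} and \eqref{co-v:2} as a matched pair. Starting from the defining formula ${}_i\omega^{(a)} = ({}_i\Theta^{(a)}H)^{-1}({}_i\Theta^{(a)}\tilde S)(\Lambda^{(a)}+b_i^{(a)})\tilde S^{-1}H$, I apply both sides to $A^{(a)}(z) = H^{-1}\tilde S X^{(a)}(z)$. The factors telescope: $\tilde S^{-1}H\cdot H^{-1}\tilde S = I$, leaving $({}_i\Theta^{(a)}H)^{-1}({}_i\Theta^{(a)}\tilde S)(\Lambda^{(a)}+b_i^{(a)})X^{(a)}(z)$. Using $\Lambda^{(a)}X^{(a)}(z) = zX^{(a)}(z)$ this becomes $(z+b_i^{(a)})({}_i\Theta^{(a)}H)^{-1}({}_i\Theta^{(a)}\tilde S)X^{(a)}(z)$, and recognizing $({}_i\Theta^{(a)}H)^{-1}({}_i\Theta^{(a)}\tilde S)X^{(a)}(z) = {}_i\Theta^{(a)}(H^{-1}\tilde S X^{(a)}(z)) = {}_i\Theta^{(a)}A^{(a)}(z)$ yields \eqref{co-v:1}. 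For \eqref{co-v:2}, the matrix ${}_i\Omega^{(a)} = S({}_i\Theta^{(a)}S)^{-1}$ acts on ${}_i\Theta^{(a)}B(z) = ({}_i\Theta^{(a)}S)X(z)$, since the spectral variable $z$ (hence $X(z)$) is inert under the parameter shift ${}_i\Theta^{(a)}$; the product collapses to $S X(z) = B(z)$, giving \eqref{co-v:2} directly.

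The identities \eqref{co-v:3} and \eqref{co-v:4} follow by the identical mechanism with $\omega_i = (\Theta_i H)^{-1}(\Theta_i\tilde S)(\Lambda^{(q)}+c_i-1)\tilde S^{-1}H$ and $\Omega_i = S(\Theta_i S)^{-1}$, now invoking $\Lambda^q X^{(a)}(z) = z X^{(a)}(z)$ (equivalently $\Lambda^{(q)}$ on the relevant component) in place of $\Lambda^{(a)}$, and the shift $\Theta_i$ in place of ${}_i\Theta^{(a)}$. The telescoping is structurally the same, so these require no new idea beyond substituting the appropriate operator and the $c_i-1$ shift.

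The only genuine subtlety — and the step I would check most carefully — is the commutation between the parameter-shift operators ${}_i\Theta^{(a)}$, $\Theta_i$ and the operator $\vartheta^{(a)}$ that generates the rows of the moment matrix, i.e.\ the claim that ${}_i\Theta^{(a)}\vartheta^{(a)} = \vartheta^{(a)}\,{}_i\Theta^{(a)}$ so that the shift passes through the powers $(\vartheta^{(a)})^m$ producing the entries $\rho^{(a)}_m$ cleanly, exactly as exploited in the proof of \eqref{hypRelM1}. Granting this commutativity (which holds because a shift of a parameter $b_i^{(a)}$ or $c_j$ commutes with differentiation in $\eta^{(a)}$), the argument is a purely formal manipulation of the factorization, and the remaining work is bookkeeping of which $\Lambda$-type operator acts as multiplication by the spectral variable on $X^{(a)}$ versus $X$.
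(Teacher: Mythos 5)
Your proposal is correct and follows essentially the same route as the paper: apply each connection matrix, written out via its definition, to the corresponding polynomial vector ($A^{(a)}=H^{-1}\tilde S X^{(a)}$ or $B=SX$), let the factors telescope through the Gauss--Borel data, and invoke the eigenvalue relation $\Lambda^{(a)}X^{(a)}(z)=zX^{(a)}(z)$ (resp.\ $\Lambda^{p}X(z)=zX(z)$) together with the inertness of $X$ under the parameter shifts. The paper's proof displays only \eqref{co-v:1} and declares the rest analogous, so your explicit treatment of \eqref{co-v:2}--\eqref{co-v:4} is just a fuller write-up of the identical argument.
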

\begin{proof}
Relations \eqref{co-v:1}, \eqref{co-v:2}, \eqref{co-v:3}, and \eqref{co-v:4} are proven by applying the connection matrices to the vectors of polynomials. As an example, the proof of \eqref{co-v:1} is shown below:
 \begin{equation*}
     ({}_i\Theta^{(a)}H^{-1}\tilde{S})(\Lambda^{(a)}+b_i^{(a)})\tilde{S}^{-1}H(H^{-1}\tilde{S}X^{a}(z))={}_i\Theta^{(a)}(H^{-1}\tilde{S})(z+b_i^{(a)})X^{(a)}(z)=(z+b_i^{(a)}){}_i\Theta^{(a)}A^{(a)}(z).
 \end{equation*}
\end{proof}
\section{Multiple Toda systems and hypergeometric $\tau$ functions}
In this section the multiple Toda equations and the multiple Toda system for the recurrence coefficients will be studied. In particular, for the case of three weights Toda equations and the differential-difference equation solved by $tau$ function will be studied.
\begin{defi}
For $a\in\{1,\dots,p\}$,     the consider the strictly lower triangular matrices:
 \[   \begin{aligned}
        \phi^{(a)}&:=(\vartheta^{(a)}S)S^{-1}, & \tilde{\phi}^{(a)}&:=(\vartheta^{(a)}\tilde{S})\tilde{S}^{-1}.
    \end{aligned}\]
\end{defi}
Differential properties of the multiple orthogonal polynomials  can be described in terms of these matrices:
\begin{lemma}
   For $a\in\{1,\dots,p\}$,  the multiple orthogonal polynomials of type II and type I satisfy:
\[    \begin{aligned}
        \vartheta^{(a)}B&=\phi^{(a)}B, &
        \vartheta^{(a)}(HA^{(a)})&=\tilde{\phi}^{(a)}HA^{(a)}.
   \end{aligned}\]
\end{lemma}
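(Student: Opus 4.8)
The plan is to differentiate the defining relations for the type II and type I families directly with respect to the deformation parameter via the operator $\vartheta^{(a)}=\eta^{(a)}\partial_{\eta^{(a)}}$, exploiting that the monomial vectors $X$ and $X^{(a)}$ depend only on the spectral variable $x$. Hence $\vartheta^{(a)}X=0$ and $\vartheta^{(a)}X^{(a)}=0$, so the entire parameter dependence of the polynomials is carried by the Gauss--Borel factors $S$, $\tilde{S}$, $H$, which do vary with $\eta^{(a)}$ through the weights.

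For type II, I would start from $B=SX$ and apply $\vartheta^{(a)}$. Using $\vartheta^{(a)}X=0$, the product rule gives $\vartheta^{(a)}B=(\vartheta^{(a)}S)X$. Substituting $X=S^{-1}B$ then yields $\vartheta^{(a)}B=(\vartheta^{(a)}S)S^{-1}B=\phi^{(a)}B$, which is the first identity by the definition of $\phi^{(a)}$.

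For type I, I would rewrite the definition $A^{(a)}=H^{-1}\tilde{S}X^{(a)}$ in the equivalent form $HA^{(a)}=\tilde{S}X^{(a)}$, so that it is the combination $HA^{(a)}$, rather than $A^{(a)}$ or $H$ separately, that gets differentiated. Applying $\vartheta^{(a)}$ and using $\vartheta^{(a)}X^{(a)}=0$ gives $\vartheta^{(a)}(HA^{(a)})=(\vartheta^{(a)}\tilde{S})X^{(a)}$; substituting $X^{(a)}=\tilde{S}^{-1}(HA^{(a)})$ turns the right-hand side into $(\vartheta^{(a)}\tilde{S})\tilde{S}^{-1}(HA^{(a)})=\tilde{\phi}^{(a)}(HA^{(a)})$, which is the second identity.

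The argument has no serious obstacle; the only point deserving attention is the observation that $\vartheta^{(a)}$ annihilates $X$ and $X^{(a)}$, which localizes the derivative on the factorization data and legitimizes the substitutions $X=S^{-1}B$ and $X^{(a)}=\tilde{S}^{-1}(HA^{(a)})$. As a consistency check one may also note, by differentiating the lower unitriangular $S$ and $\tilde{S}$ (whose diagonals are the constant identity), that $\phi^{(a)}$ and $\tilde{\phi}^{(a)}$ are indeed strictly lower triangular, matching the assertion of the preceding definition, although this triangularity is not needed to establish the two identities themselves.
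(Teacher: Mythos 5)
Your proposal is correct and follows essentially the same argument as the paper: differentiate $B=SX$ and $HA^{(a)}=\tilde S X^{(a)}$ with $\vartheta^{(a)}$, use that the monomial vectors are parameter-independent, and reinsert $S^{-1}S$ (resp.\ $\tilde S^{-1}\tilde S$) to recognize $\phi^{(a)}$ and $\tilde\phi^{(a)}$. Your explicit remark that $\vartheta^{(a)}$ annihilates $X$ and $X^{(a)}$ merely makes precise a step the paper leaves implicit.
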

\begin{proof}
	First, if we act with $\vartheta^{(a)}$ over $B$ we have the following equalities:
	\begin{equation*}
		\vartheta^{(a)}B=(\vartheta^{(a)}S)X=(\vartheta^{(a)}S)S^{-1}SX=(\vartheta^{(a)}S)S^{-1}B.
	\end{equation*} 
 Second,   we can see that 
    \begin{equation*}\vartheta^{(a)}(HA^{(a)})=(\vartheta^{(a)}\tilde{S})X^{(a)}=\tilde{\phi}^{(a)}HA^{(a)}.
    \end{equation*}
\end{proof}

The interplay between these matrices follow from the Gauss--Borel factorization: 
\begin{pro}
 For $a\in\{1,\dots,p\}$, the following equations are fulfilled
    \begin{align}
       \label{s-w} \vartheta^{(a)}H-\phi^{(a)}H-H(\tilde{\phi}^{(a)})^\top&=(T^{(a)})^\top H, 
    \end{align}
\end{pro}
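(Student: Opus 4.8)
The plan is to obtain \eqref{s-w} by differentiating the Gauss--Borel factorization and matching the result against the already-established relation $\vartheta^{(a)}\mathscr M=\mathscr M(\Lambda^{(a)})^\top$ (the Proposition proved from \eqref{moments}). So first I would insert $\mathscr M=S^{-1}H\tilde S^{-\top}$ into both sides of that relation, treating the operator $\vartheta^{(a)}=\eta^{(a)}\partial/\partial\eta^{(a)}$ as a derivation on the algebra of semi-infinite matrices.

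Next I would expand $\vartheta^{(a)}(S^{-1}H\tilde S^{-\top})$ by the Leibniz rule. The two ingredients are the derivative-of-inverse identities $\vartheta^{(a)}S^{-1}=-S^{-1}(\vartheta^{(a)}S)S^{-1}=-S^{-1}\phi^{(a)}$ and $\vartheta^{(a)}\tilde S^{-1}=-\tilde S^{-1}(\vartheta^{(a)}\tilde S)\tilde S^{-1}=-\tilde S^{-1}\tilde\phi^{(a)}$, the latter transposing to $\vartheta^{(a)}\tilde S^{-\top}=-(\tilde\phi^{(a)})^\top\tilde S^{-\top}$; here I invoke the definitions $\phi^{(a)}=(\vartheta^{(a)}S)S^{-1}$ and $\tilde\phi^{(a)}=(\vartheta^{(a)}\tilde S)\tilde S^{-1}$. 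Collecting the three terms and factoring $S^{-1}$ on the left and $\tilde S^{-\top}$ on the right gives
\[
\vartheta^{(a)}\mathscr M=S^{-1}\big(\vartheta^{(a)}H-\phi^{(a)}H-H(\tilde\phi^{(a)})^\top\big)\tilde S^{-\top}.
\]

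Finally I would equate this with $\mathscr M(\Lambda^{(a)})^\top=S^{-1}H\tilde S^{-\top}(\Lambda^{(a)})^\top$ and strip the outer factors by multiplying on the left by $S$ and on the right by $\tilde S^{\top}$, obtaining
\[
\vartheta^{(a)}H-\phi^{(a)}H-H(\tilde\phi^{(a)})^\top=H\tilde S^{-\top}(\Lambda^{(a)})^\top\tilde S^{\top}=(T^{(a)})^\top H,
\]
where the last equality follows from the definition $T^{(a)}=H^{-1}\tilde S\Lambda^{(a)}\tilde S^{-1}H$: since $H$ is diagonal, $(T^{(a)})^\top=H\tilde S^{-\top}(\Lambda^{(a)})^\top\tilde S^{\top}H^{-1}$, so $H\tilde S^{-\top}(\Lambda^{(a)})^\top\tilde S^{\top}=(T^{(a)})^\top H$. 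This is exactly \eqref{s-w}.

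The computation is essentially routine; the only points requiring care are the transposition bookkeeping (keeping straight that $\tilde S$ enters as $\tilde S^{-\top}$, and that $H^\top=H$, $H^{-\top}=H^{-1}$ because $H$ is diagonal) and the signs in the derivative-of-inverse identities. As a sanity check one may read off the diagonal part: since $\phi^{(a)}H$ is strictly lower triangular and $H(\tilde\phi^{(a)})^\top$ strictly upper triangular, the diagonal reduces to $\vartheta^{(a)}H=(\text{diag}\,T^{(a)})H$, the expected Toda-type logarithmic-derivative relation, which corroborates the identity.
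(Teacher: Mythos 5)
Your proof is correct and follows essentially the same route as the paper: differentiating the Gauss--Borel factorization $\mathscr M=S^{-1}H\tilde S^{-\top}$ with the Leibniz rule and derivative-of-inverse identities, equating with $\vartheta^{(a)}\mathscr M=\mathscr M(\Lambda^{(a)})^\top$, stripping the triangular factors, and identifying $H\tilde S^{-\top}(\Lambda^{(a)})^\top\tilde S^{\top}=(T^{(a)})^\top H$ from the definition of $T^{(a)}$. The triangularity sanity check at the end is a nice addition but not needed.
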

\begin{proof}
    Departing from the Gauss--Borel factorization for the moment matrix we can see that
    \begin{equation*}
        \vartheta^{(a)}(S^{-1}H\tilde{S}^-\top)=-S^{-1}(\vartheta^{(a)}S)S^{-1}H\tilde{S}^{-\top}+S^{-1}(\vartheta^{(a)}H)\tilde{S}^{-\top}-S^{-1}H\tilde{S}^{-\top}(\vartheta^{(a)}\tilde{S})^\top\tilde{S}^{-\top}=S^{-1}H\tilde{S}^{-\top}(\Lambda^{(a)})^\top.
    \end{equation*}
Consequently, we find:
    \begin{equation*}
        \vartheta^{(a)}H-(\vartheta^{(a)}S)S^{-1}H-H\tilde{S}^{-\top}(\vartheta^{(a)}\tilde{S})^\top=H\tilde{S}^{-\top}(\Lambda^{(a)})^\top\tilde{S}^\top H^{-1}H=(T^{(a)})^\top H.
    \end{equation*}
\end{proof}
For  the complete derivation $\vartheta$ we have the following associated strictly lower matrices:
\begin{defi}
    The following matrices are defined
    \begin{align}
        \phi&:=\sum^p_{a=1}\phi^{(a)}, & \tilde{\phi}&:=\sum^p_{a=1}\tilde{\phi}^{(a)}.
    \end{align}
\end{defi}
\begin{pro}
    The following equations are fulfilled
    \begin{align}
        \label{S-W:d}(\vartheta H)H^{-1}&=\alpha^{(0)},\\
        \label{S-W:sub}-\phi&=\sum_{a=1}^p(\Lambda^\top)^a\alpha^{(a)},\\
        \label{S-W:super}-\tilde{\phi}&=\Lambda^\top\mathfrak a_{-}HH^{-1}.
    \end{align}
\end{pro}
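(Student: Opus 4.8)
The plan is to derive all three identities at once from a single \emph{summed} version of the matrix equation \eqref{s-w}. Summing \eqref{s-w} over $a\in\{1,\dots,p\}$ and invoking the definitions $\phi=\sum_{a=1}^p\phi^{(a)}$, $\tilde{\phi}=\sum_{a=1}^p\tilde{\phi}^{(a)}$ together with $\vartheta=\sum_{a=1}^p\vartheta^{(a)}$ and the identity $\sum_{a=1}^pT^{(a)}=T^\top$, I obtain the master relation
\begin{equation*}
	\vartheta H-\phi H-H\tilde{\phi}^\top=\Big(\sum_{a=1}^pT^{(a)}\Big)^\top H=TH.
\end{equation*}
Each of the three claimed equations will then fall out by comparing, sector by sector, the strictly lower, the diagonal, and the strictly upper parts of this one equation.

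Before the comparison I would record the relevant structural facts. Since $S$ and $\tilde{S}$ are lower unitriangular and $\vartheta^{(a)}$ kills the constant identity term of their expansions, each $\phi^{(a)}$ and $\tilde{\phi}^{(a)}$ is strictly lower triangular, hence so are $\phi$ and $\tilde{\phi}$. As $H$ is diagonal, $\vartheta H$ is diagonal, $-\phi H$ is strictly lower triangular, and $-H\tilde{\phi}^\top$ is strictly upper triangular. On the right-hand side, the banded Hessenberg form $T=\Lambda+\alpha^{(0)}+\Lambda^\top\alpha^{(1)}+\dots+(\Lambda^\top)^p\alpha^{(p)}$ shows that $TH$ has its single superdiagonal equal to $\Lambda H$, its main diagonal equal to $\alpha^{(0)}H$, and its $p$ subdiagonals given by $\sum_{a=1}^p(\Lambda^\top)^a\alpha^{(a)}H$.

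Matching sectors then finishes the proof. The diagonal part gives $\vartheta H=\alpha^{(0)}H$, so right-multiplication by $H^{-1}$ yields \eqref{S-W:d}. The strictly lower part gives $-\phi H=\sum_{a=1}^p(\Lambda^\top)^a\alpha^{(a)}H$, and again multiplying on the right by $H^{-1}$ produces \eqref{S-W:sub}. For the strictly upper part I get $-H\tilde{\phi}^\top=\Lambda H$; transposing and using $H^\top=H$ gives $-\tilde{\phi}=H\Lambda^\top H^{-1}$, and the elementary shift identity $H\Lambda^\top=\Lambda^\top\mathfrak{a}_{-}H$ (both sides carrying $H_n$ on the first subdiagonal) rewrites this as $-\tilde{\phi}=\Lambda^\top\mathfrak{a}_{-}HH^{-1}$, which is \eqref{S-W:super}.

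The one point needing genuine care is the independence of the three sectors, which rests on confirming that $TH$ possesses exactly one superdiagonal. Because the strictly upper part of the left-hand side is $-H\tilde{\phi}^\top$, the comparison with $\Lambda H$ simultaneously forces $\tilde{\phi}$ to collapse to a single subdiagonal; this is a consistency fact to be verified rather than assumed, and I regard it as the main obstacle. Once the banded structure of $T$ and the strict triangularity of $\phi$ and $\tilde{\phi}$ are established, the sector-by-sector matching is purely mechanical.
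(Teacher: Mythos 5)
Your proof is correct and takes essentially the same route as the paper's: sum \eqref{s-w} over $a\in\{1,\dots,p\}$, use $\sum_{a=1}^p T^{(a)}=T^\intercal$, and identify the diagonal, strictly lower, and strictly upper sectors of the resulting single matrix equation. You additionally supply details the paper leaves implicit (the transposition step and the shift identity $H\Lambda^\top=\Lambda^\top\mathfrak{a}_-H$ giving \eqref{S-W:super}), and your master relation with right-hand side $TH$ corrects what appear to be typos in the paper's displayed summed equation, where the right-hand side is written as $(T)^\top H$ and the third term as $H(\phi)^\top$ instead of $H\tilde{\phi}^\top$.
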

\begin{proof}
    Adding equations \eqref{s-w} for  $a\in\{1,\dots,p\}$ we obtain 
    \begin{equation}\vartheta H-\phi H-H(\phi)^\top=(T)^\top H. \end{equation}
Identifying diagonal, subdiagonal and superdiagonal parts in both sides of the equation we obtain equations \eqref{S-W:d}, \eqref{S-W:sub} and \eqref{S-W:super}. 
\end{proof}
For the subdiagonal in the lower connection factors in the Gauss--Borel factorization we have: 
\begin{pro}
     The following equations are satisfied
     \begin{align}
         \label{tS1}\vartheta\tilde{S}^{[1]}&=-\mathfrak a_{-}HH^{-1},\\
         \label{tSn}\vartheta\tilde{S}^{(a)}&=-\mathfrak a_{-}^aH\mathfrak a_{-}^{a-1}H^{-1}\tilde{S}^{[a-1]},\\
         \label{S1}\vartheta S^{[1]}&=-\alpha^{(1)},\\
         \label{Sn}\vartheta S^{[a]}&=-\alpha^{(a)}-\sum^{a-1}_{i=1}\mathfrak a_{-}\alpha^{(a-i)}S^{[i]}.
     \end{align}
\end{pro}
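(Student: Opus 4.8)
The plan is to read off all four identities by matching, order by order, the coefficient of each power of $\Lambda^\top$ in the two defining relations $\vartheta\tilde{S}=\tilde{\phi}\,\tilde{S}$ and $\vartheta S=\phi\, S$ (these follow at once by summing the definitions of $\phi^{(a)},\tilde{\phi}^{(a)}$ over $a$), once the closed forms for $\tilde{\phi}$ and $\phi$ supplied by \eqref{S-W:super} and \eqref{S-W:sub} are inserted. The only algebraic device required is the commutation of a diagonal matrix $D$ through the lowering shift, $D(\Lambda^\top)^i=(\Lambda^\top)^i(\mathfrak{a}_{-}^iD)$, together with the observation that $\vartheta$ annihilates the constant matrix $\Lambda^\top$, so that $\vartheta\tilde{S}=\sum_{k\ge 1}(\Lambda^\top)^k(\vartheta\tilde{S}^{[k]})$ and similarly for $S$.

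For the type I factor I would write \eqref{S-W:super} as $\tilde{\phi}=-\Lambda^\top(\mathfrak{a}_{-}H)H^{-1}$ and substitute $\tilde{S}=\sum_{i\ge 0}(\Lambda^\top)^i\tilde{S}^{[i]}$ (with $\tilde{S}^{[0]}=I$) into $\vartheta\tilde{S}=\tilde{\phi}\,\tilde{S}$. Commuting the lone diagonal factor $(\mathfrak{a}_{-}H)H^{-1}$ to the right of $(\Lambda^\top)^i$ turns the right-hand side into $-\sum_{i\ge 0}(\Lambda^\top)^{i+1}\mathfrak{a}_{-}^i\big((\mathfrak{a}_{-}H)H^{-1}\big)\tilde{S}^{[i]}$, and equating the coefficient of $(\Lambda^\top)^k$ yields the two-term recursion $\vartheta\tilde{S}^{[k]}=-(\mathfrak{a}_{-}^kH)(\mathfrak{a}_{-}^{k-1}H)^{-1}\tilde{S}^{[k-1]}$. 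The case $k=1$ is precisely \eqref{tS1}, and the general $k=a$ is \eqref{tSn}.

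For the type II factor the mechanism is identical, now with $\phi=-\sum_{b=1}^p(\Lambda^\top)^b\alpha^{(b)}$ from \eqref{S-W:sub} and $S=\sum_{i\ge 0}(\Lambda^\top)^iS^{[i]}$. Since $\phi$ carries $p$ subdiagonals, commuting each $\alpha^{(b)}$ past $(\Lambda^\top)^i$ and collecting the coefficient of $(\Lambda^\top)^a$ produces a convolution, $\vartheta S^{[a]}=-\sum_{b=1}^{\min(a,p)}(\mathfrak{a}_{-}^{a-b}\alpha^{(b)})S^{[a-b]}$. Taking $a=1$ leaves only the term $b=1$ and gives \eqref{S1}, whereas for general $a$ I would peel off the $b=a$ term (which contributes $-\alpha^{(a)}$, as $S^{[0]}=I$) from the remaining sum to reach \eqref{Sn}.

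No step conceals a genuine difficulty; the care needed is purely in the bookkeeping of the shift operators as diagonal coefficients are transported through powers of $\Lambda^\top$—in particular in tracking how the power of $\mathfrak{a}_{-}$ accumulates—and in respecting the truncation $b\le\min(a,p)$ for small $a$. I note in passing that carrying out this coefficient extraction produces $\mathfrak{a}_{-}^i\alpha^{(a-i)}$ in the summand of \eqref{Sn}, so the exponent on $\mathfrak{a}_{-}$ there grows with $i$.
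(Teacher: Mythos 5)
Correct, and essentially the paper's own argument: the paper's one-line proof (``equating diagonal by diagonal'' in \eqref{S-W:super} and \eqref{S-W:sub}) is precisely your extraction of coefficients of powers of $\Lambda^\top$ after inserting the closed forms of $\tilde{\phi}=(\vartheta\tilde{S})\tilde{S}^{-1}$ and $\phi=(\vartheta S)S^{-1}$ into $\vartheta\tilde{S}=\tilde{\phi}\,\tilde{S}$ and $\vartheta S=\phi\,S$. Your closing remark is also well taken: since $\alpha^{(a-i)}(\Lambda^\top)^i=(\Lambda^\top)^i\bigl(\mathfrak{a}_{-}^{i}\alpha^{(a-i)}\bigr)$, the summand in \eqref{Sn} must be $\bigl(\mathfrak{a}_{-}^{i}\alpha^{(a-i)}\bigr)S^{[i]}$, so the single power of $\mathfrak{a}_{-}$ printed there is a typo, as is also consistent with the powers $\mathfrak{a}_{-}^{j}$ appearing in \eqref{relRecN}.
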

\begin{proof}
Equations \eqref{tS1} and \eqref{tSn} can be proved equating diagonal by diagonal \eqref{S-W:super}. Analogously, equations \eqref{S1} and \eqref{Sn} can be proved from \eqref{S-W:sub}.
\end{proof}
\begin{pro}
    The following equation are fulfilled
\[    \begin{aligned}
        \vartheta\tilde{S}^{[-a]}&=\mathfrak a_{-}\tilde{S}^{[1-a]}\mathfrak a_{-}HH^{-1}, &  a&\in\{1,\dots,q\}.
    \end{aligned}\]
\end{pro}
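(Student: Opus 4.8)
The plan is to derive the stated formula by differentiating the identity $\tilde{S}\tilde{S}^{-1}=I$ and then reading off its subdiagonals. First I would apply $\vartheta$ to $\tilde{S}\tilde{S}^{-1}=I$, which yields $(\vartheta\tilde{S})\tilde{S}^{-1}+\tilde{S}(\vartheta\tilde{S}^{-1})=0$, so that $\vartheta\tilde{S}^{-1}=-\tilde{S}^{-1}(\vartheta\tilde{S})\tilde{S}^{-1}=-\tilde{S}^{-1}\tilde{\phi}$, where I have used $\tilde{\phi}=(\vartheta\tilde{S})\tilde{S}^{-1}=\sum_{a}\tilde{\phi}^{(a)}$. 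Invoking \eqref{S-W:super}, which gives $-\tilde{\phi}=\Lambda^\top\mathfrak{a}_{-}HH^{-1}$, this collapses to the compact identity $\vartheta\tilde{S}^{-1}=\tilde{S}^{-1}\Lambda^\top\mathfrak{a}_{-}HH^{-1}$. Note this is the ``inverse'' counterpart of the reasoning that produced \eqref{tS1} and \eqref{tSn} for the $\tilde{S}^{[a]}$, now carried out on $\tilde{S}^{-1}$ instead of on $\tilde{S}$.

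The second step is to expand both sides in powers of $\Lambda^\top$ and match subdiagonals. Writing $\tilde{S}^{-1}=\sum_{k\geq0}(\Lambda^\top)^k\tilde{S}^{[-k]}$ with diagonal coefficients and $\tilde{S}^{[0]}=I$, the left-hand side reads $\sum_{k\geq0}(\Lambda^\top)^k\,\vartheta\tilde{S}^{[-k]}$. For the right-hand side I would push each coefficient past $\Lambda^\top$ using the elementary commutation rule $D\Lambda^\top=\Lambda^\top\mathfrak{a}_{-}D$, valid for any diagonal matrix $D$, which turns $\tilde{S}^{-1}\Lambda^\top\mathfrak{a}_{-}HH^{-1}$ into $\sum_{j\geq0}(\Lambda^\top)^{j+1}(\mathfrak{a}_{-}\tilde{S}^{[-j]})\mathfrak{a}_{-}HH^{-1}$. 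Re-indexing with $k=j+1$ and equating the coefficient of $(\Lambda^\top)^a$ on both sides yields precisely $\vartheta\tilde{S}^{[-a]}=\mathfrak{a}_{-}\tilde{S}^{[1-a]}\mathfrak{a}_{-}HH^{-1}$ for $a\geq1$, as claimed.

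The only delicate point, and the step I would treat most carefully, is this coefficient extraction: one must verify the commutation $D\Lambda^\top=\Lambda^\top\mathfrak{a}_{-}D$ for diagonal $D$ (a one-line entrywise check, since $(D\Lambda^\top)_{n,m}=d_n\delta_{n,m+1}=(\Lambda^\top\mathfrak{a}_{-}D)_{n,m}$) and then keep the index shift straight, so that the $j$-th subdiagonal of $\tilde{S}^{-1}$ feeds the $(j+1)$-th subdiagonal of $\vartheta\tilde{S}^{-1}$. Everything else is formal manipulation of the $\Lambda^\top$-graded expansions; the normalization $\tilde{S}^{[0]}=I$ forces the $k=0$ terms to vanish on both sides, consistent with the range $a\in\{1,\dots,q\}$ in the statement.
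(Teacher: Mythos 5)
Your proof is correct and follows the same route as the paper: differentiate $\tilde{S}\tilde{S}^{-1}=I$ and substitute \eqref{S-W:super}. In fact your write-up is the more careful of the two: the paper stops at the matrix identity for $\vartheta\tilde{S}^{-1}$ without ever performing the subdiagonal extraction, and its displayed intermediate equations carry a spurious minus sign, $\vartheta\tilde{S}^{-1}=-\tilde{S}^{-1}\Lambda^\top\mathfrak{a}_{-}HH^{-1}$, which is inconsistent with its own \eqref{S-W:super} and would yield the proposition with the wrong sign; your chain $\vartheta\tilde{S}^{-1}=-\tilde{S}^{-1}\tilde{\phi}=\tilde{S}^{-1}\Lambda^\top\mathfrak{a}_{-}HH^{-1}$, followed by the entrywise commutation $D\Lambda^\top=\Lambda^\top\mathfrak{a}_{-}D$ and the re-indexing $k=j+1$, produces the stated positive sign and is the correct completion of the argument.
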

\begin{proof}
    Departing from $\vartheta(\tilde{S}\tilde{S}^{-1})=\vartheta(I)=0$ we have that $0=(\vartheta \tilde{S})\tilde{S}^{-1}+\tilde{S}(\vartheta\tilde{S}^{-1})$, so:
   \begin{equation*}
        \tilde{S}\vartheta\tilde{S}^{-1}=-\Lambda^\top\mathfrak a_{-}H H^{-1}
    \end{equation*}
and we obtain that
\begin{equation*}
    \vartheta\tilde{S}^{-1}=-\tilde{S}^{-1}\Lambda^\top\mathfrak a_{-}H H^{-1}.
\end{equation*}   
\end{proof}
\begin{pro}    
For $a\in\{1,\dots,p\}$, the recursion coefficients can be expressed as follows
\begin{align}
    \label{relRec0}\alpha^{(0)}&=(\vartheta H)H^{-1},\\
    \label{relRecN}\alpha^{(a)}&=-[\vartheta S^{[a]}+\sum_{j=1}^{a-1}(\vartheta \mathfrak a_{-}^jS^{[a-j]})S^{[-j]}].
\end{align}
\end{pro}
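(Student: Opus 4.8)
The first identity \eqref{relRec0} requires no new work: it is literally the diagonal relation \eqref{S-W:d}, so I would simply invoke it. All the content sits in \eqref{relRecN}, and my plan is to read it off directly from the definition $\phi=(\vartheta S)S^{-1}$ (which holds since $\vartheta=\sum_a\vartheta^{(a)}$ and $\phi=\sum_a\phi^{(a)}$) together with the subdiagonal identity \eqref{S-W:sub}, rather than unfolding the recursion \eqref{S1}--\eqref{Sn} inductively, which would force me to track the relations between the blocks $S^{[k]}$ and $S^{[-k]}$ coming from $S^{-1}S=I$.

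First I would expand both factors of $\phi=(\vartheta S)S^{-1}$ in powers of $\Lambda^\top$. Since $\vartheta$ annihilates the parameter-independent matrices $I$ and $\Lambda^\top$, one gets $\vartheta S=\sum_{k\ge1}(\Lambda^\top)^k\,\vartheta S^{[k]}$, whereas $S^{-1}=\sum_{l\ge0}(\Lambda^\top)^l S^{[-l]}$ with $S^{[0]}=I$. Multiplying out, the generic term is $(\Lambda^\top)^k(\vartheta S^{[k]})(\Lambda^\top)^l S^{[-l]}$.

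The key manipulation is to commute the diagonal block $\vartheta S^{[k]}$ past $(\Lambda^\top)^l$ using the rule $D(\Lambda^\top)^l=(\Lambda^\top)^l\mathfrak{a}_{-}^l D$, valid for any diagonal $D$. This turns the generic term into $(\Lambda^\top)^{k+l}\mathfrak{a}_{-}^l(\vartheta S^{[k]})S^{[-l]}$, so collecting by the total power $a=k+l$ gives $\phi=\sum_{a\ge1}(\Lambda^\top)^a\sum_{l=0}^{a-1}\mathfrak{a}_{-}^l(\vartheta S^{[a-l]})S^{[-l]}$. Matching this against $-\phi=\sum_a(\Lambda^\top)^a\alpha^{(a)}$ from \eqref{S-W:sub} and reading off the coefficient of $(\Lambda^\top)^a$ yields $\alpha^{(a)}=-\sum_{l=0}^{a-1}\mathfrak{a}_{-}^l(\vartheta S^{[a-l]})S^{[-l]}$.

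Finally I would split off the $l=0$ term, which is exactly $-\vartheta S^{[a]}$, and use that $\vartheta$ commutes with $\mathfrak{a}_{-}$ — the two act on disjoint data, $\vartheta$ differentiating in the parameters $\eta^{(a)}$ while $\mathfrak{a}_{-}$ merely relabels diagonal entries — to write $\mathfrak{a}_{-}^l(\vartheta S^{[a-l]})=\vartheta(\mathfrak{a}_{-}^l S^{[a-l]})$; after renaming $l\to j$ this is precisely \eqref{relRecN}. I expect the only genuine obstacle to be bookkeeping: fixing the non-commutation of $\Lambda^\top$ with diagonal blocks in the correct order, so that $\mathfrak{a}_{-}^l$ (and not $\mathfrak{a}_{+}^l$) appears, and verifying the two harmless commutations $\vartheta\Lambda^\top=0$ and $\vartheta\mathfrak{a}_{-}=\mathfrak{a}_{-}\vartheta$. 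Once these are pinned down the computation is entirely forced.
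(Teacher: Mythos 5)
Your proposal is correct and takes essentially the same approach as the paper: the paper's own proof consists precisely of splitting Equation \eqref{S-W:sub} by diagonals, which is exactly what your expansion of $\phi=(\vartheta S)S^{-1}$ in powers of $\Lambda^\top$ (via the commutation rule $D(\Lambda^\top)^l=(\Lambda^\top)^l\mathfrak a_{-}^l D$) carries out explicitly. The first identity is, as you note, just \eqref{S-W:d} restated.
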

\begin{proof}
    It is proven splitting Equation \eqref{S-W:sub} by diagonals.
\end{proof}

We now derive a differential system for the functions
\[
\begin{aligned}
	  q_n&\coloneq \log{H_n}, & f_n^{(m)}&\coloneq S^{[m]}_n,
\end{aligned}
\]
 where $m\in\{1,\dots,p\}$ 
\begin{teo}\label{theo:third_order_PDE}
    Functions $q_n=\log{H_n}$, $f_n^{(m)}=S^{[m]}_n$, where $m\in\{1,\dots,p\}$ solve the following system:
    \begin{align}
        \label{MTS-0} \vartheta q_n&=f_{n-1}^{(1)}-f_n^{(1)},\\
        \label{MTS-n}f^{(m+1)}_{n-1}+f^{(-m-1)}_n+\sum_{j=1}^mf^{(m-j)}_{n+j}f^{(-j)}_{n}&=-[\vartheta f^{(m)}_n+\sum^{m-1}_{j=1}(\vartheta f^{(m-j)}_{m+j})f^{(-j)}_m],\\
        \label{MTS-q}e^{q_{n+p}-q_n}&=-[\vartheta f_n^{(p)}+\sum^{p-1}_{j=1}(\vartheta f^{(p-j)}_{n+j})f^{(-j)}_n].
    \end{align}
    
\end{teo}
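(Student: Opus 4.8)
The plan is to recognize the claimed scalar system as the diagonal-by-diagonal reading of matrix identities already established, one identity for each band index $a\in\{0,1,\dots,p\}$ of the Hessenberg matrix. For each such $a$ we have two descriptions of the recursion coefficient $\alpha^{(a)}$ at our disposal: the \emph{algebraic} one read off from the expansion $T=\Lambda+\alpha^{(0)}+\Lambda^{\top}\alpha^{(1)}+\dots+(\Lambda^{\top})^{p}\alpha^{(p)}$ (the listed expressions of $\alpha^{(0)},\dots,\alpha^{(p)}$ in terms of $S^{[\pm\cdot]}$ and $H$, coming from $T=S\Lambda S^{-1}$), and the \emph{differential} one furnished by \eqref{relRec0} and \eqref{relRecN}. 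Equating the two and projecting onto the $n$-th diagonal entry yields, respectively, \eqref{MTS-0}, \eqref{MTS-n}, and \eqref{MTS-q}.

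First I would settle \eqref{MTS-0}, the $a=0$ band. Combining $\alpha^{(0)}=(\vartheta H)H^{-1}$ from \eqref{relRec0} with the algebraic expression $\alpha^{(0)}=\mathfrak a_{+}S^{[1]}-S^{[1]}$ and taking the $n$-th diagonal entry, the left side produces $\vartheta\log H_n=\vartheta q_n$, while on the right $(\mathfrak a_{+}S^{[1]})_n=S^{[1]}_{n-1}=f^{(1)}_{n-1}$ and $(S^{[1]})_n=f^{(1)}_n$; this is exactly \eqref{MTS-0}.

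Next, for \eqref{MTS-n} with $m\in\{1,\dots,p-1\}$, I would equate the algebraic expression for $\alpha^{(m)}$ with the differential one \eqref{relRecN}, namely $\alpha^{(m)}=-\bigl[\vartheta S^{[m]}+\sum_{j=1}^{m-1}(\vartheta\,\mathfrak a_{-}^{j}S^{[m-j]})S^{[-j]}\bigr]$. Reading the $n$-th diagonal entry, the shift operators become index shifts: $(\mathfrak a_{+}S^{[m+1]})_n=f^{(m+1)}_{n-1}$, $(S^{[-m-1]})_n=f^{(-m-1)}_n$, and the quadratic term contributes $f^{(m-j)}_{n+j}f^{(-j)}_n$, so the left-hand side of \eqref{MTS-n} is precisely the entry of the algebraic expression, while its right-hand side is the entry of \eqref{relRecN}. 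Finally, \eqref{MTS-q} is the $a=p$ band: here the algebraic side collapses to $\alpha^{(p)}=(\mathfrak a_{-}^{p}H)H^{-1}$, whose $n$-th entry is $H_{n+p}/H_n=e^{q_{n+p}-q_n}$ since $q_n=\log H_n$, and the differential side is \eqref{relRecN} specialized to $a=p$.

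The routine but delicate part — and the main obstacle — is the bookkeeping when passing from matrices to scalar entries: keeping the action of $\mathfrak a_{\pm}$ on products of diagonal matrices consistent with the shifted indices $n-1$, $n+j$, $n+p$, and remembering that the symbols $f^{(-j)}_n=S^{[-j]}_n$ are not independent data but are determined by $f^{(1)},\dots,f^{(j)}$ through $SS^{-1}=I$. One should also check at the top-left corner (small $n$) that the conventions $S^{[0]}=I$, the vanishing of $\mathfrak a_{+}$ on the first entry, and the finite width of the bands are compatible, so that the scalar equations hold for all $n\in\N$ with no boundary exceptions.
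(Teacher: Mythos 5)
Your proposal is correct and, for \eqref{MTS-0} and \eqref{MTS-n}, it is essentially the paper's own argument: both equate the expressions for the recurrence coefficients $\alpha^{(m)}$ coming from the Gauss--Borel factorization (the $S$-dependent formulas accompanying $T=S\Lambda S^{-1}$) with the differential expressions \eqref{relRec0}--\eqref{relRecN}, and then read the resulting matrix identity entry by entry along each diagonal, using $(\mathfrak{a}_{+}D)_n=D_{n-1}$ and $(\mathfrak{a}_{-}^{j}D)_n=D_{n+j}$. Where you genuinely differ is in the last equation: you obtain \eqref{MTS-q} in one step by equating the $H$-form $\alpha^{(p)}=(\mathfrak{a}_{-}^{p}H)H^{-1}$, whose $n$-th entry is $H_{n+p}/H_n=e^{q_{n+p}-q_n}$, with \eqref{relRecN} at $a=p$, whereas the paper describes a two-step elimination: differentiate the band-$(p-1)$ identity, isolate $\vartheta S^{[p]}$ from the band-$p$ identity, and substitute so as to remove $S^{[p]}$. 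Your route is shorter and is in fact the one that reproduces \eqref{MTS-q} exactly as stated (that equation still contains $\vartheta f^{(p)}_n$, so no elimination of $S^{[p]}$ can have occurred in it); the paper's elimination procedure is really the reduction performed after the theorem, which for $p=3$ yields \eqref{3p2p3}. One caveat on the bookkeeping you defer as ``routine but delicate'': a careful diagonal-entry computation of $S\Lambda S^{-1}$ produces the quadratic terms $(\mathfrak{a}_{-}^{j-1}S^{[m+1-j]})S^{[-j]}$, i.e.\ $f^{(m+1-j)}_{n+j-1}f^{(-j)}_n$, which agrees neither with the sum $\mathfrak{a}_{-}^{a}S^{[a-i]}S^{[-i]}$ printed in the introduction nor with the indices $f^{(m-j)}_{n+j}f^{(-j)}_n$ printed in \eqref{MTS-n}; for instance, at $p=3$, $m=1$ the correct computation gives \eqref{3p1} with the term $f_n(f_n-f_{n+1})$, which a literal reading of \eqref{MTS-n} misses. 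These are internal typos of the paper rather than defects of your method, but your claim that the entries match ``precisely'' should be replaced by the corrected indices rather than taken at face value.
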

\begin{proof}
Equations \eqref{MTS-0} and \eqref{MTS-n} are obtained from \eqref{relRec0} and \eqref{relRecN} for $m\in\{1,\dots,p-2\}$ and equating them to expressions of the coefficients $\alpha$ depending on $S$ matrices.

Equation \eqref{MTS-q} is obtained from \eqref{MTS-n} for $m=p-1$ and \eqref{MTS-n} for $m=p$: we derive the first equation, isolate $\vartheta S^{[p]}$ in the second equation and substitute in order to eliminate $S^{[p]}.$
\end{proof}

As an example of the result above we have the case for three weights. Now, we have that $q_n=\log{H_n}$, $f_n=S^{[1]}_n$, $g_n=S^{[2]}_n$ and $k_n=S^{[3]}_n.$\\
We have four equations:
\begin{align}
    \label{3p0}\vartheta q_n&=f_{n-1}-f_n,\\
    \label{3p1}\vartheta f_n&=g_n-g_{n-1}+f_n(f_n-f_{n+1}),\\
    \label{3p2}\vartheta g_n - (\vartheta f_{n+1})
    f_n&=k_n-g_{n+1}f_n-f_{n+2}g_n+f_nf_{n+1}f_{n+2}-k_{n-1}+f_nf_{n+2}+g_n-f_nf_{n+1},\\
    \label{3p3}e^{q_{n+3}-q_n}&=-\vartheta k_n+\vartheta g_{n+1}f_n+\vartheta f_{n+2}(g_n-f_nf_{n+1}).
\end{align}
The variable $k_n$ can be omited if we derivate equation \eqref{3p2} and obtain $\vartheta k_n$ from \eqref{3p3}, so the resultant expression is:
\begin{multline}
    \label{3p2p3}\vartheta^2g_n-(\vartheta^2f_{n+1})f_n-(\vartheta f_{n+1})\vartheta f_n=e^{q_{n+2}-q_{n-1}}-e^{q_{n+3}-q_n}-(\vartheta f_n)g_{n+1}-f_{n+2}\vartheta g_n+(\vartheta f_n) f_{n+1}f_{n+2}+f_n(\vartheta f_{n+1})f_{n+2}\\-(\vartheta g_n )f_{n-1}+
(    \vartheta f_{n+1})(f_{n-1}f_n-g_{n-1})+(\vartheta f_n) f_{n+2}+\vartheta g_n+f_n\vartheta f_{n+2}-f_{n+1}\vartheta f_n-f_n\vartheta f_{n+1}.
\end{multline}
In terms of $\tau$-functions we have that:
\begin{align}
    \label{tauq}q_n&=\log{H_n}=\log{\tau_{n+1}}-\log{\tau_n},\\
    \label{tauf}f_n&=-\frac{\vartheta\tau_{n+1}}{\tau_{n+1}},\\
    \label{taug}g_n&=\frac{\tau_{n+2}^2}{\tau_{n+2}}.
\end{align}
If \eqref{3p1} is written in terms of $\tau$-functions, the following result is obtained:
\begin{equation}
    g_n-g_{n-1}=\frac{\tau_{n+2}^2}{\tau_n+2}-\frac{\tau_{n+1}^2}{\tau_{n+1}}=\frac{(\vartheta\tau_{n+1})\vartheta\tau_{n+2}}{\tau_{n+1}\tau_{n+2}}-\frac{\vartheta^2\tau_{n+1}}{\tau_{n+1}},
\end{equation}
so we have that
\begin{equation}
    \label{gn}g_n=g_0+\sum_{j=1}^n\left(\frac{(\vartheta\tau_{j+1})\vartheta\tau_{j+2}}{\tau_{j+1}\tau_{j+2}}-\frac{\vartheta^2\tau_{j+1}}{\tau_{j+1}}\right).
\end{equation}
Let us take a derivation on  Equation \eqref{gn}:
\begin{equation}
    \label{dergn}\vartheta g_n=\vartheta g_0+\sum_{j=1}^n\left(\frac{(\vartheta^2\tau_{j+1})\vartheta\tau_{j+2}+(\vartheta\tau_{j+1})\vartheta^2\tau_{j+2}}{\tau_{j+1}\tau_{j+2}}-\frac{(\vartheta\tau_{j+1})^2\vartheta\tau_{j+2}}{\tau_{j+2}(\tau_{j+1})^2}-\frac{(\vartheta\tau_{j+2})^2\vartheta\tau_{j+1}}{\tau_{j+1}(\tau_{j+2})^2}-\frac{\vartheta^3\tau_{j+1}}{\tau_{j+1}}+\frac{(\vartheta^2\tau_{j+1})\vartheta\tau_{j+1}}{\tau_{j+1}}\right).
\end{equation}
\begin{rem}
By substituting the expressions from \eqref{tauq}, \eqref{tauf}, \eqref{gn}, and \eqref{dergn} into equation \eqref{3p2p3}, we obtain a fourth-order differential equation (with discrete integrations) for the \(\tau\)-functions.
\end{rem}

\begin{pro}
For  $n\in\{0,\dots,p-1\}$, we have the following Toda type system
 \[   \begin{aligned}
        \vartheta\alpha^{(0)}&=\alpha^{(1)}-\mathfrak a_{+}\alpha^{(1)},\\
        \vartheta\alpha^{(n)}&=\alpha^{(n+1)}-\mathfrak a_{+}\alpha^{(n+1)}+\alpha^{(n)}(\mathfrak a_{-}^n\alpha^{(0)}-\alpha^{(0)}),\\
        \vartheta\alpha^{(p)}&=\alpha^{(p)}(\mathfrak a_{-}^p\alpha^{(0)}-\alpha^{(0)}),
    \end{aligned}\]
\end{pro}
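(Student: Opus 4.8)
The plan is to package the three relations as the diagonal decomposition of a single Lax equation for the Hessenberg matrix $T$. First I would differentiate $T=S\Lambda S^{-1}$ with respect to $\vartheta$; using $\vartheta(S^{-1})=-S^{-1}(\vartheta S)S^{-1}$ and the definition $\phi=(\vartheta S)S^{-1}$ this yields at once
\[
\vartheta T=(\vartheta S)\Lambda S^{-1}-S\Lambda S^{-1}(\vartheta S)S^{-1}=[\phi,T].
\]
The crucial input is then \eqref{S-W:sub}, which states $-\phi=\sum_{a=1}^{p}(\Lambda^\top)^a\alpha^{(a)}$. Comparing with the band structure $T=\Lambda+\alpha^{(0)}+\sum_{a=1}^{p}(\Lambda^\top)^a\alpha^{(a)}$ we get $\phi=\Lambda+\alpha^{(0)}-T$, so that, because $[T,T]=0$, the Lax equation collapses to the much simpler
\[
\vartheta T=[\Lambda+\alpha^{(0)},T].
\]

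Next I would expand both sides in the basis of diagonals. Since $\vartheta\Lambda=0$, the left-hand side has diagonal part $\vartheta\alpha^{(0)}$ and $n$-th subdiagonal part $(\Lambda^\top)^n\vartheta\alpha^{(n)}$. For the right-hand side I would systematically move all shift matrices to the right using, for a diagonal $D$, the rules $\Lambda D=(\mathfrak a_-D)\Lambda$, $(\Lambda^\top)^nD=(\mathfrak a_+^nD)(\Lambda^\top)^n$, $D(\Lambda^\top)^n=(\Lambda^\top)^n\mathfrak a_-^nD$ and $\Lambda\Lambda^\top=I$. The two computations that carry the content are
\[
[\Lambda,(\Lambda^\top)^{n+1}\alpha^{(n+1)}]=(\Lambda^\top)^n\bigl(\alpha^{(n+1)}-\mathfrak a_+\alpha^{(n+1)}\bigr)
\]
and
\[
[\alpha^{(0)},(\Lambda^\top)^n\alpha^{(n)}]=(\Lambda^\top)^n\alpha^{(n)}\bigl(\mathfrak a_-^n\alpha^{(0)}-\alpha^{(0)}\bigr),
\]
where the second uses that diagonal matrices commute; moreover the superdiagonal contributions $[\Lambda,\alpha^{(0)}]$ and $[\alpha^{(0)},\Lambda]$ are negatives of one another and cancel, consistently with the absence of a superdiagonal in $\vartheta T$.

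Finally, matching the coefficient of each diagonal produces the system. On the main diagonal only the term $[\Lambda,\Lambda^\top\alpha^{(1)}]$ survives, giving $\vartheta\alpha^{(0)}=\alpha^{(1)}-\mathfrak a_+\alpha^{(1)}$; for the $n$-th subdiagonal with $1\le n\le p-1$ exactly the two commutators above contribute, giving $\vartheta\alpha^{(n)}=\alpha^{(n+1)}-\mathfrak a_+\alpha^{(n+1)}+\alpha^{(n)}(\mathfrak a_-^n\alpha^{(0)}-\alpha^{(0)})$; and on the last ($p$-th) subdiagonal there is no $\alpha^{(p+1)}$, so the first commutator drops out and one is left with $\vartheta\alpha^{(p)}=\alpha^{(p)}(\mathfrak a_-^p\alpha^{(0)}-\alpha^{(0)})$. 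I expect the main obstacle to be the purely combinatorial bookkeeping in the commutator expansion: deciding which of $[\Lambda,T]$ and $[\alpha^{(0)},T]$ lands on the $n$-th subdiagonal, and keeping track of the $\mathfrak a_\pm$ shifts throughout. The one genuine subtlety is the boundary mismatch between $\Lambda\Lambda^\top=I$ and $\Lambda^\top\Lambda$, which differ only in the top-left entry; this is harmless here because every offending term is pre-multiplied by an $\mathfrak a_+$ image, whose first entry vanishes. As a sanity check, the diagonal equation is consistent with \eqref{S-W:d} and \eqref{S1}.
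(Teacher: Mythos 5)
Your proof is correct, but it takes a genuinely different route from the paper's. The paper disposes of this proposition in one line: apply $\vartheta$ to the explicit coefficient formulas \eqref{relRec0} and \eqref{relRecN}, which express $\alpha^{(0)},\dots,\alpha^{(p)}$ through $H$ and the diagonals $S^{[a]}$, and then use \eqref{S-W:d} (together with the derivative formulas for the $S^{[a]}$) to recognize the right-hand sides; the Lax equation $\vartheta T=[\phi,T]=[T_+,T]$ appears only afterwards, as a separate proposition. You reverse this order: you first derive the Lax equation from $T=S\Lambda S^{-1}$ and \eqref{S-W:sub}, and then read off the Toda system as its diagonal-by-diagonal decomposition. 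Your two commutator identities check out, including the boundary subtlety: $(\Lambda^\top)^a\Lambda$ equals $(\Lambda^\top)^{a-1}$ times a projector that differs from the identity only in the $(0,0)$ entry, and the defect term is annihilated precisely because the first entry of $\mathfrak a_{+}\alpha^{(a)}$ vanishes — your remark on this is exactly the right justification. There is also no circularity: you use only material established before the proposition (the band structure of $T$, the factorization $T=S\Lambda S^{-1}$, and \eqref{S-W:sub}), not the paper's later Lax proposition, which instead drops out of your argument as a byproduct. What your route buys is conceptual clarity and uniformity — the three cases ($n=0$, $1\le n\le p-1$, $n=p$) are distinguished automatically by which terms survive in the index ranges of the two sums, and the integrable structure is manifest; what the paper's route buys is that it manipulates only diagonal matrices and their $\mathfrak a_{\pm}$ shifts, so the noncommutativity of $\Lambda^\top\Lambda$ versus $\Lambda\Lambda^\top$ never enters. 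Incidentally, your diagonal matching makes it plain that the middle equation holds for $n\in\{1,\dots,p-1\}$, which is slightly at odds with the quantifier $n\in\{0,\dots,p-1\}$ as stated in the proposition.
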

\begin{proof}
This can be proved by applying the operator \(\vartheta\) to Equations \eqref{relRec0} and \eqref{relRecN}, and utilizing Equation \eqref{S-W:d}.
\end{proof}
\begin{pro}
    The following Lax equation is satisfied:
    \begin{equation}
        \vartheta T=[\phi,T]=[T_+,T],
    \end{equation}
    where $T_{-}=\sum^{p}_{n=1}(\Lambda^\top)^n\alpha^{(n)}$ and $T_{+}\alpha^{(0)}+\Lambda.$
\end{pro}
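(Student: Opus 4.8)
The plan is to prove the two equalities separately: the first, $\vartheta T=[\phi,T]$, as a direct consequence of differentiating the Gauss--Borel dressing $T=S\Lambda S^{-1}$, and the second, $[\phi,T]=[T_+,T]$, as a bookkeeping identity that identifies the strictly lower matrix $\phi$ with the subdiagonal block $T_-$ of the recurrence matrix.

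First I would establish $\vartheta T=[\phi,T]$ by applying $\vartheta$ to the defining relation $T=S\Lambda S^{-1}$. Since $\vartheta=\sum_{a=1}^p\vartheta^{(a)}$ is a derivation and $\Lambda$ is constant (independent of the deformation parameters $\eta^{(a)}$), the Leibniz rule gives $\vartheta T=(\vartheta S)\Lambda S^{-1}+S\Lambda\,\vartheta(S^{-1})$. Using $\vartheta(S^{-1})=-S^{-1}(\vartheta S)S^{-1}$ and inserting $S^{-1}S=I$ in the first term, this rearranges into $\vartheta T=(\vartheta S)S^{-1}\,S\Lambda S^{-1}-S\Lambda S^{-1}\,(\vartheta S)S^{-1}$. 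Recognizing $\phi=(\vartheta S)S^{-1}=\sum_{a=1}^p(\vartheta^{(a)}S)S^{-1}$ together with $T=S\Lambda S^{-1}$ yields $\vartheta T=\phi T-T\phi=[\phi,T]$.

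For the second equality I would invoke the structural identity \eqref{S-W:sub}, which reads $-\phi=\sum_{a=1}^p(\Lambda^\top)^a\alpha^{(a)}$; by the definition of $T_-$ this is precisely $\phi=-T_-$. Combining this with the banded expansion $T=\Lambda+\alpha^{(0)}+\sum_{a=1}^p(\Lambda^\top)^a\alpha^{(a)}$, which splits as $T=T_++T_-$ with $T_+=\alpha^{(0)}+\Lambda$, one computes $[T_+,T]=[T-T_-,T]=[T,T]-[T_-,T]=-[T_-,T]=[-T_-,T]=[\phi,T]$, using $[T,T]=0$. This closes the chain $\vartheta T=[\phi,T]=[T_+,T]$.

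The argument is essentially a verification, so there is no genuinely hard step; the only point requiring care is the sign bookkeeping in the identification $\phi=-T_-$ extracted from \eqref{S-W:sub}, together with the observation that $\phi$, being the product of the strictly lower factor $\vartheta S$ with the unitriangular $S^{-1}$, is strictly lower triangular with exactly $p$ nonvanishing subdiagonals, matching the band of $T_-$. Equivalently, the statement is the zero-curvature repackaging of the Sato--Wilson relations already recorded in \eqref{S-W:d} and \eqref{S-W:sub}: projecting $\vartheta T=[\phi,T]$ onto its diagonal and subdiagonal parts reproduces exactly those identities, so the proposition adds no new information beyond their compatible assembly into Lax form.
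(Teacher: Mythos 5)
Your proof is correct and follows essentially the same route as the paper: differentiating the dressing relation $T=S\Lambda S^{-1}$ to get $\vartheta T=[\phi,T]$, and then using \eqref{S-W:sub} to identify $\phi=-T_-=T_+-T$, whence $[\phi,T]=[T_+,T]$. Your added remarks on the band structure and the Sato--Wilson interpretation are sound but not needed; the core argument matches the paper's.
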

\begin{proof}
    Departing from $T=S\Lambda S^{-1}$ we have that
    \begin{equation}
        \vartheta T=(\vartheta S)\Lambda S^{-1}-S\Lambda S^{-1}(\vartheta S)S^{-1}=\phi T-T\phi=[\phi,T],
    \end{equation}
    and attending to Equation \eqref{S-W:sub} we  see that $\phi=-T_{-}=T_{+}-T$ so $[\phi,T]=[T_{+},T].$
\end{proof}
\begin{pro}
The following compatibility conditions hold:
\begin{align}
	\label{comp2a} \vartheta \Psi &= [\phi, \Psi], \\
	\label{comp2b} \vartheta \Psi^\top &= [\mu, \Psi^\top] + \Psi^\top,
\end{align}
where \(\mu^{(n)} := (\vartheta_n H^{-1}) H + H^{-1} \tilde{\phi}^{(n)} H\) and \(\mu = \sum_{i=1}^p \mu^{(i)}\).
\end{pro}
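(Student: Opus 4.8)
The plan is to differentiate the two connection relations in \eqref{conn1} with respect to the deformation parameters, exploiting the linear differential equations already established for the wave vectors $B$ and $A^{(a)}$, and then to strip off the test vectors using completeness of the multiple orthogonal polynomials.

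For \eqref{comp2a} I would start from $\Psi B(x)=\theta(x)B(x-1)$. Summing the type II part of the preceding Lemma over $a$ gives $\vartheta B=\phi B$, and since $\phi=(\vartheta S)S^{-1}$ while $S$ and $L^-$ do not depend on the $\eta^{(a)}$, the same identity holds with $x$ replaced by $x-1$, i.e. $\vartheta B(x-1)=\phi B(x-1)$. Because $\theta(k)=k(k+c_1-1)\cdots(k+c_N-1)$ is independent of the $\eta^{(a)}$ we have $\vartheta\theta=0$. Applying $\vartheta$ to the connection relation therefore yields $(\vartheta\Psi)B+\Psi\phi B=\theta(x)\phi B(x-1)=\phi\Psi B$, where in the last step I used that the scalar $\theta(x)$ commutes with $\phi$. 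Hence $(\vartheta\Psi)B=[\phi,\Psi]B$; writing $B=SX$ with $S$ invertible and using the linear independence of the monomials in $X(x)$ gives $(\vartheta\Psi-[\phi,\Psi])S=0$, so $\vartheta\Psi=[\phi,\Psi]$.

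For \eqref{comp2b} the first step is to produce the type I analogue of $\vartheta B=\phi B$. From $A^{(a)}=H^{-1}\tilde S X^{(a)}$ and $\vartheta\tilde S=\tilde\phi\tilde S$ (which follows from $\tilde\phi=\sum_a\tilde\phi^{(a)}=(\vartheta\tilde S)\tilde S^{-1}$) one computes $\vartheta A^{(a)}=(\vartheta H^{-1})\tilde S X^{(a)}+H^{-1}\tilde\phi\tilde S X^{(a)}=\bigl[(\vartheta H^{-1})H+H^{-1}\tilde\phi H\bigr]A^{(a)}$, and the bracket is exactly $\mu=\sum_n\mu^{(n)}$; thus $\vartheta A^{(a)}=\mu A^{(a)}$, and this persists under $x\mapsto x+1$. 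The essential new ingredient is that $\sigma_a(k)=\eta^{(a)}(k+b_1^{(a)})\cdots(k+b^{(a)}_{M^{(a)}})$ is homogeneous of degree one in $\eta^{(a)}$, so $\vartheta\sigma_a=\sigma_a$. Applying $\vartheta$ to $\Psi^\top A^{(a)}(x)=\sigma_a(x)A^{(a)}(x+1)$ gives $(\vartheta\Psi^\top)A^{(a)}+\Psi^\top\mu A^{(a)}=\sigma_a A^{(a)}(x+1)+\sigma_a\mu A^{(a)}(x+1)=\Psi^\top A^{(a)}+\mu\Psi^\top A^{(a)}$, where I reinserted the connection relation and used that $\sigma_a$ is scalar. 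Rearranging yields $(\vartheta\Psi^\top)A^{(a)}=\bigl([\mu,\Psi^\top]+\Psi^\top\bigr)A^{(a)}$ for every $a$. Setting $P=\bigl(\vartheta\Psi^\top-[\mu,\Psi^\top]-\Psi^\top\bigr)H^{-1}\tilde S$, these relations read $PX^{(a)}(x)=0$ for all $x$ and all $a$; since the columns selected by $X^{(a)}(x)$, as $a$ and the power of $x$ range, exhaust all columns, $P=0$ and hence $\vartheta\Psi^\top=[\mu,\Psi^\top]+\Psi^\top$.

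I expect the only delicate points to be bookkeeping ones: checking that the definition $\mu^{(n)}=(\vartheta_n H^{-1})H+H^{-1}\tilde\phi^{(n)}H$ sums to $(\vartheta H^{-1})H+H^{-1}\tilde\phi H$, and justifying the passage from the vector identities to the matrix identities through the completeness of $B$ and of the $A^{(a)}$. Conceptually the argument is transparent: the difference between \eqref{comp2a} and \eqref{comp2b} is caused entirely by $\vartheta\theta=0$ versus $\vartheta\sigma_a=\sigma_a$, and it is precisely the unit homogeneity weight of each $\sigma_a$ in its own $\eta^{(a)}$ that produces the extra $+\Psi^\top$ term.
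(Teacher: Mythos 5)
Your proof is correct and follows essentially the same route as the paper's: differentiate the two connection relations \eqref{conn1} using $\vartheta B=\phi B$ and $\vartheta A^{(a)}=\mu A^{(a)}$ (derived exactly as you do, from the Gauss--Borel factors), with the contrast $\vartheta\theta=0$ versus $\vartheta\sigma_a=\sigma_a$ producing the extra $\Psi^\top$ term; your explicit stripping-off of the vectors $X$, $X^{(a)}$ to pass from the vector identities to the matrix identities is a detail the paper leaves implicit. The only slip is your aside that ``$S$ and $L^-$ do not depend on the $\eta^{(a)}$'' ($S$ certainly does, else $\phi=0$; you mean $X$ and $L^{-}$ are $\eta$-independent), but this is harmless since $\vartheta B(x-1)=\phi B(x-1)$ is just the identity $\vartheta B=\phi B$ evaluated at the shifted argument.
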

\begin{proof}
   Starting from equations 
\[   \begin{aligned}
   	\theta(z)B(z-1)&=\Psi B(z), & \vartheta B(z)&=\phi B(z),
   \end{aligned} \]
   we find:
   \begin{align*}
   	\vartheta(\theta(z)B(z-1)) &= \vartheta(\Psi B(z)) = \vartheta\Psi B(z) + \Psi\vartheta B(z) = (\vartheta\Psi + \Psi\phi) B(z),\\
   	\vartheta(\theta(z)B(z-1)) &= \theta(z) \phi B(z-1) = \phi\Psi B(z).
   \end{align*}
   Equating both equations, we have:
   \[
   \phi\Psi = \vartheta\Psi + \Psi\phi,
   \]
   and isolating the term with the derivative of \(\Psi\), we obtain \eqref{comp2a}.
   
   To derive \eqref{comp2b}, we first prove that \(\vartheta^{(m)}A^{(n)} = \mu^{(m)}A^{(n)}\):
   \begin{align*}
   	\vartheta^{(m)}A^{(n)} &= \vartheta^{(m)}(H^{-1}\tilde{S}X^{(n)}) \\
   	&= ((\vartheta^{(m)}H^{-1})HH^{-1}\tilde{S}+H^{-1}(\vartheta^{(m)}\tilde{S})(H^{-1}\tilde{S})^{-1}(H^{-1}\tilde{S}))X^{(n)} \\
   	&= ((\vartheta^{(m)}H^{-1})H + H^{-1}\tilde{\phi}^{(m)}H)A^{(n)} \\
   	&= \mu^{(m)}A^{(n)}.
   \end{align*}
   From this equation, we get \(\sum_{i=1}^p \vartheta^{(i)}A^{(n)} = \sum_{i=1}^q \mu^{(i)}A^{(n)} \rightarrow \vartheta A^{(n)} = \mu A^{(n)}\).
   
   Now, starting from equations 
\[   \begin{aligned}
   	\sigma_n(z)A^{(n)}(z+1) &= \Psi^\top A^{(n)}(z), & \vartheta A^{(n)} &= \mu A^{(n)},
   \end{aligned}\]
   we find:
   \begin{align*}
   	\vartheta(\sigma_n(z)A^{(n)}(z+1)) &= \vartheta(\Psi^\top A^{(n)}) = (\vartheta \Psi^\top)A^{(n)} + \Psi^\top\vartheta A^{(n)} = (\vartheta\Psi^\top + \Psi^\top\mu)A^{(n)},\\
   	\vartheta(\sigma_n(z)A^{(n)}(z+1)) &= \sigma_n(z)(\mu A^{(n)}(z+1) + A^{(n)}(z+1)) = (\mu + I)\Psi^\top)A^{(n)}(z),
   \end{align*}
   so equating both equations and simplifying, we obtain:
   \[
   \vartheta\Psi^\top = [\mu,\Psi^\top] + \Psi^\top.
   \]
\end{proof}

\paragraph{\textbf{Multiple Nijhoff-Capel discrete Toda equations}}
We study now the compatibility between the shifts of the $p+1$ families of parameters for the hypergeometric series:
\begin{equation*}
    \{b_i^{(n)}\}_{i=1}^{M^{(a)}}, \{c_i\}_{i=1}^{N},
\end{equation*}
where $a\in\{1,\dots,p\}.$
For a function $u_n$ it is considered the following notation:
\begin{enumerate}
	\item $\hat u$ means a shift in one and only one  parameter  in the family of parameters $\{b^{(a)}\}$, i. e.  $b^{(a)}_r\to b^{(a)}_r+1$ for only one  given $r\in\{1,\dots,M^{(a)}\}$, the corresponding  connection matrix is denoted by $\hat\Omega^{(r)}$, we denote $\hat d\coloneq b^{(a)}_r$,
	\item  $\check u$  means a shift in one and only one  parameter  in the family of parameters $\{b^{(a)}\}$ with $m\neq m$ , i. e.  $b^{(a)}_s\to b^{(a)}_s+1$ for only one  given $q\in\{1,\dots,M^{(a)}\}$, the corresponding  connection matrix is denoted by $\check \Omega^{(r)}$,  we denote $\check  d\coloneq b^{(m)}_p$,
	\item $\tilde u$ means a shift in one and only one  parameter  in the family of parameters $\{c\}$, i. e.   $c_s\to c_s-1$ for only one  given $s\in\{1,\dots,N\}$, the corresponding  connection matrix is denoted by $\tilde \Omega^{(s)}$, we denote $\tilde  d\coloneq c_s-1$,
	\item $\bar u$ to denote the shift $n\to n+q$.
\end{enumerate}
\begin{lemma}Connection matrices fulfill the following compatibility conditions:
\begin{subequations}
	\begin{align}\label{compsr}\Omega^{(s)}\tilde{\Omega}^{(r)}=\Omega^{(r)}\hat{\Omega}^{(s)}, \\
\label{compsq}\Omega^{(s)}\tilde{\Omega}^{(q)}=\Omega^{(q)}\check{\Omega}^{(s)},\\
\label{comprq}\Omega^{(r)}\hat{\Omega}^{(q)}=\Omega^{(q)}\check{\Omega}^{(r)}.
\end{align}
\end{subequations}
\end{lemma}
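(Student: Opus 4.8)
The plan is to reduce all three identities to the single fact that the three elementary parameter shifts commute, after which each identity collapses by a telescoping cancellation. I first fix operator notation for the shifts entering the statement: let $R$ denote the hat shift $b^{(a)}_r\to b^{(a)}_r+1$, let $Q$ denote the check shift $b^{(a)}_q\to b^{(a)}_q+1$, and let $P$ denote the tilde shift $c_s\to c_s-1$, each understood to act on every parameter-dependent quantity and in particular on the lower unitriangular Gauss--Borel factor $S$, so that the decoration of a matrix records which of these shifts has been applied to it. With this convention the definitions ${}_i\Omega^{(a)}=S({}_i\Theta^{(a)}S)^{-1}$ and $\Omega_i=S(\Theta_iS)^{-1}$ say precisely that every undecorated connection matrix is a quotient ``factor over shifted factor'':
\[
\Omega^{(r)}=S(RS)^{-1},\qquad \Omega^{(q)}=S(QS)^{-1},\qquad \Omega^{(s)}=S(PS)^{-1}.
\]

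By construction a decorated connection matrix is the same expression with $S$ replaced by the once-shifted factor attached to its decoration; explicitly $\tilde\Omega^{(r)}=(PS)\big(R(PS)\big)^{-1}$, $\hat\Omega^{(s)}=(RS)\big(P(RS)\big)^{-1}$, $\tilde\Omega^{(q)}=(PS)\big(Q(PS)\big)^{-1}$, $\check\Omega^{(s)}=(QS)\big(P(QS)\big)^{-1}$, $\hat\Omega^{(q)}=(RS)\big(Q(RS)\big)^{-1}$, and $\check\Omega^{(r)}=(QS)\big(R(QS)\big)^{-1}$. Inserting the relevant pair into each side and cancelling the common inner factor then gives
\[
\Omega^{(s)}\tilde\Omega^{(r)}=S\big(R(PS)\big)^{-1},\qquad \Omega^{(r)}\hat\Omega^{(s)}=S\big(P(RS)\big)^{-1},
\]
and likewise $\Omega^{(s)}\tilde\Omega^{(q)}=S\big(Q(PS)\big)^{-1}$ against $\Omega^{(q)}\check\Omega^{(s)}=S\big(P(QS)\big)^{-1}$ for \eqref{compsq}, and $\Omega^{(r)}\hat\Omega^{(q)}=S\big(Q(RS)\big)^{-1}$ against $\Omega^{(q)}\check\Omega^{(r)}=S\big(R(QS)\big)^{-1}$ for \eqref{comprq}. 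Thus each identity is equivalent to the coincidence of a pair of doubly shifted factors, namely $R(PS)=P(RS)$, $Q(PS)=P(QS)$, and $Q(RS)=R(QS)$.

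It remains only to verify these commutations, and here I would argue through the moment matrix rather than through $S$ directly, since this is the one genuine point of the proof. The three shifts act on three distinct hypergeometric parameters by fixed integer amounts, so they commute as operations on the parameters and hence on the moments \eqref{moments}; consequently $R(P\mathscr M)=P(R\mathscr M)$ and similarly for the other pairs. Applying the Gauss--Borel factorization $\mathscr M=S^{-1}H\tilde{S}^{-\intercal}$ to each doubly shifted moment matrix and invoking the uniqueness of the factorization into a lower unitriangular, a diagonal, and an upper unitriangular factor, the lower unitriangular factor cannot depend on the order in which the two shifts were performed; that is, $R(PS)=P(RS)$, and likewise $Q(PS)=P(QS)$ and $Q(RS)=R(QS)$. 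Substituting these equalities into the displayed expressions makes the two sides of \eqref{compsr}, \eqref{compsq}, and \eqref{comprq} literally identical. The main obstacle is therefore conceptual rather than computational: correctly tracking which factor each decoration shifts and appealing to the uniqueness of the Gauss--Borel factorization, after which the algebra is a one-line telescoping cancellation.
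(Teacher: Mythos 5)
Your proof is correct, but it runs along a genuinely different track from the paper's. The paper never manipulates the Gauss--Borel factors directly: it chains the previously established connection formulas \eqref{co-v:2} and \eqref{co-v:4}, written as $\Omega^{(r)}\hat B=B$ and $\Omega^{(s)}\tilde B=B$, applies the tilde (resp.\ hat) shift to obtain $\Omega^{(s)}\tilde\Omega^{(r)}\tilde{\hat B}=B$ and $\Omega^{(r)}\hat\Omega^{(s)}\hat{\tilde B}=B$, and then invokes the compatibility $\tilde{\hat B}=\hat{\tilde B}$ of the shifted polynomial sequences to conclude \eqref{compsr}. You instead work purely at the level of the factorization: every connection matrix is written as a quotient $S(\,\cdot\,S)^{-1}$, the products telescope to $S\bigl(R(PS)\bigr)^{-1}$ and $S\bigl(P(RS)\bigr)^{-1}$, and the whole lemma reduces to the order-independence of the doubly shifted factor $S$, which you ground in the commutativity of the shifts on the moment matrix together with uniqueness of the Gauss--Borel factorization. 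Each route buys something. The paper's is shorter because it recycles results already proven, but it leaves two points implicit: the commutation $\tilde{\hat B}=\hat{\tilde B}$ itself, and the final step that two matrices acting identically on $\tilde{\hat B}(x)=\tilde{\hat S}X(x)$ for all $x$ must coincide (which needs invertibility of the unitriangular factor). Your version delivers the matrix identity directly, with no passage through the polynomials, and makes explicit the one genuinely nontrivial input, namely that doubly shifted objects are well defined independently of the order of the shifts. A minor streamlining: once uniqueness of the factorization tells you that $S$ is a function of the hypergeometric parameters, the commutation $R(PS)=P(RS)$ is automatic, since $R$, $P$, $Q$ are translations of disjoint parameter coordinates; the detour through the explicit moments \eqref{moments} is harmless but not needed.
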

\begin{proof}
In the one hand, connection formulas can be written as follows
\[\begin{aligned}\Omega^{(r)}\hat{B}&=B, & \Omega^{(s)}\tilde{B}=B, \end{aligned}
\]so that $\tilde{\Omega}^{(r)}\tilde{\hat{B}}=\tilde{B} $ and, consequently, 
$\Omega^{(s)}\tilde{\Omega}^{(r)}\tilde{\hat{B}}=\Omega^{(s)}\tilde{B}=B$.
On the other hand, we have
$\hat{\Omega}^{(s)}\hat{\tilde{B}}=\hat{B}$ so that 
$\Omega^{(r)}\hat{\Omega}^{(s)}\hat{\tilde{B}}=\Omega^{(r)}\hat{B}=B$. 
Hence, the compatibility $\tilde{\hat{B}}=\hat{\tilde{B}}$ leads to \eqref{compsr}.
Relations \eqref{compsq} and \eqref{comprq} are proven analogously.
\end{proof}
\begin{lemma}
The following equations are satisfied
\begin{subequations}\label{comp}
	 \begin{align}\label{compJo}
 	\hat{T}^\intercal\omega^{(r)}=\omega^{(r)}T^\intercal \\ 
        \label{compJoq}\check{T}^\intercal\omega^{(q)}=\omega^{(q)}T^\intercal \\  
        \label{compJos}\tilde{T}^\intercal\omega^{(s)}=\omega^{(s)}T^\intercal
  \end{align} 
\end{subequations}
 holds.
 \end{lemma}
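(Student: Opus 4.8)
The plan is to derive the three identities by testing them against the type I polynomials and exploiting the two spectral relations these polynomials satisfy. Write $\omega^{(r)}={}_r\omega^{(a)}$ for the lower connection matrix attached to the hat shift $b_r^{(a)}\to b_r^{(a)}+1$, and let $\hat A^{(a)}={}_r\Theta^{(a)}A^{(a)}$ and $\hat T$ denote the type I polynomials and recurrence matrix built from the shifted weights. Two facts are available for free: the transpose recurrence $T^\intercal A^{(a)}(z)=zA^{(a)}(z)$ holds for any admissible weight, so its shifted version $\hat T^\intercal\hat A^{(a)}(z)=z\hat A^{(a)}(z)$ holds as well; and the connection formula \eqref{co-v:1}, in the present notation, reads $\omega^{(r)}A^{(a)}(z)=(z+\hat d)\hat A^{(a)}(z)$ with $\hat d=b_r^{(a)}$.

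First I would apply each side of \eqref{compJo} to the vector $A^{(a)}(z)$. Using the connection formula and then the shifted spectral relation gives
\[
\hat T^\intercal\omega^{(r)}A^{(a)}(z)=(z+\hat d)\,\hat T^\intercal\hat A^{(a)}(z)=z(z+\hat d)\,\hat A^{(a)}(z),
\]
while using first the unshifted spectral relation and then the connection formula gives
\[
\omega^{(r)}T^\intercal A^{(a)}(z)=z\,\omega^{(r)}A^{(a)}(z)=z(z+\hat d)\,\hat A^{(a)}(z).
\]
Because the scalar $(z+\hat d)$ commutes past $\hat T^\intercal$, the two right-hand sides coincide, so $\hat T^\intercal\omega^{(r)}A^{(a)}(z)=\omega^{(r)}T^\intercal A^{(a)}(z)$ for every $z\in\N_0$ and every $a$.

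To promote this to the matrix identity \eqref{compJo} I would invoke completeness of the type I system: since $A^{(a)}(z)=H^{-1}\tilde S X^{(a)}(z)$ with $H^{-1}\tilde S$ invertible, and the monomial vectors $X^{(a)}(z)$ jointly span the whole space as $a$ ranges over $\{1,\dots,p\}$ and $z$ over $\N_0$, two semi-infinite matrices agreeing on all the $A^{(a)}(z)$ must be equal; the banded structure of both sides reduces this to a finite linear-algebra check diagonal by diagonal. The remaining identities follow verbatim: for \eqref{compJoq} replace the hat shift by the check shift of a second $b$-parameter, with $\check d=b_q^{(a)}$ and the same connection formula \eqref{co-v:1}, and for \eqref{compJos} use the tilde shift $c_s\to c_s-1$, with $\tilde d=c_s-1$ and connection formula \eqref{co-v:3}. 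The algebra is a short commuting-scalar manipulation; the only point genuinely demanding care is this last passage from ``equal on every $A^{(a)}(z)$'' to ``equal as matrices,'' where one must use all $p$ families $A^{(1)},\dots,A^{(p)}$ at once, since no single family spans on its own.
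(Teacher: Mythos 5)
Your proposal is correct and follows essentially the same route as the paper's own proof: apply both sides to the type I vectors $A^{(a)}(z)$, use the shifted and unshifted spectral relations together with the connection formula \eqref{co-v:1} (resp.\ \eqref{co-v:3}), and then pass from agreement on all $A^{(a)}(z)=H^{-1}\tilde S X^{(a)}(z)$ to the matrix identity via invertibility of the lower triangular factor $H^{-1}\tilde S$. Your closing remark that all $p$ families $X^{(1)},\dots,X^{(p)}$ are needed to span (since each $X^{(a)}$ only touches columns in positions congruent to $a$ modulo $p$) is precisely the point the paper's terse final step relies on.
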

 \begin{proof} All Equations in \eqref{comp} are proven similarly. Let us prove \eqref{compJo}. From the eigenvalue equation for \(T^\intercal\) and \(A^{(a)}\), \({A^{(a)}}^\intercal T = z{A^{(a)}}^\intercal\), and Equation \eqref{co-v:1} for this case, \(\hat{A}^{(a)}(z) = \frac{\omega^{(r)}}{z+\hat{d}}A^{(a)}(z)\), it can be seen that:
 	\[
 	{{}{\hat A}^{(a)}}^\intercal \hat{T} = z{{}{\hat A}^{(a)}}^\intercal,
 	\]
 	so that:
 	\[
 	\hat{T}^\intercal\frac{\omega^{(r)}}{z+\hat{d}}A^{(a)}(z) = z\frac{\omega^{(r)}}{z+\hat{d}}A^{(a)}(z) = \frac{\omega^{(r)}}{z+\hat{d}}T^\intercal A^{(a)}.
 	\]
 	Hence, if we denote \(M = (\hat{T}^\intercal \omega^{(r)} - \hat{T}^\intercal\omega^{(r)}T)H^{-1}\tilde S\), we find \(MX^{(a)} = 0\), so that \(M = 0\). Since \(H^{-1}\tilde S\) is a lower triangular invertible matrix, we conclude the result. Finally, we obtain:
 	\[
 	\hat{T}^\intercal \omega^{(r)} = \omega^{(r)}T^\intercal.
 	\]
 	For equations \eqref{compJoq} and \eqref{compJos}, similar proofs can be carried out.
 \end{proof}

As an example, for three weights, $p=3$, we find tridiagonal connection matrices:
\begin{align*}
    \Omega^{(s)}&=(\Lambda^\top)^3(\Omega^{(s)})^{[3]}+(\Lambda^\top)^2(\Omega^{(s)})^{[2]}+\Lambda^\top(\Omega^{(s)})^{[1]}+I,\\
    \Omega^{(r)}&=(\Lambda^\top)^3(\Omega^{(r)})^{[3]}+(\Lambda^\top)^2(\Omega^{(r)})^{[2]}+\Lambda^\top(\Omega^{(r)})^{[1]}+I,\\
    \Omega^{(q)}&=(\Lambda^\top)^3(\Omega^{(q)})^{[3]}+(\Lambda^\top)^2(\Omega^{(q)})^{[2]}+\Lambda^\top(\Omega^{(q)})^{[1]}+I,
\end{align*}
where the diagonals are given by
\begin{align*}
    (\Omega^{(s)})^{[3]}&=\frac{\mathfrak a_{-}^3H}{\tilde{d}\tilde{H}}, & (\Omega^{(s)})^{[2]}&=S^{[2]}-\tilde{S}^{[2]}+\tilde{S}^{[1]}(\mathfrak a_{-}\tilde{S}^{[1]}-\mathfrak a_{-}S^{[1]}), &(\Omega^{[s]})^{[1]}&=S^{[1]}-\tilde{S}^{[1]},\end{align*}
\begin{align*}(\Omega^{(r)})^{[3]}&=p_n\frac{\mathfrak a_{-}^3H}{\hat{d}\hat{H}}, & (\Omega^{(r)})^{[2]}&=S^{[2]}-\hat{S}^{[2]}+\hat{S}^{[1]}(\mathfrak a_{-}\hat{S}^{[1]}-\mathfrak a_{-}S^{[1]}), &(\Omega^{[r]})^{[1]}&=S^{[1]}-\hat{S}^{[1]},\end{align*}
\begin{align*}(\Omega^{(q)})^{[3]}&=p_m\frac{\mathfrak a_{-}^3H}{\check{d}\check{H}}, & (\Omega^{(q)})^{[2]}&=S^{[2]}-\check{S}^{[2]}+\check{S}^{[1]}(\mathfrak a_{-}\check{S}^{[1]}-\mathfrak a_{-}S^{[1]}), &(\Omega^{[q]})^{[1]}&=S^{[1]}-\check{S}^{[1]}.
\end{align*}

\section{Laguerre--Freud Equations by Examples}
Our objective is to determine the Laguerre--Freud matrix for various specific cases and derive the coefficients of the polynomials or the Laguerre--Freud equations for these coefficients. 

In the case of \(p\) weights, our extension to this multiple setting of the  Laguerre--Freud equations for the coefficients of the \(T\) matrix are defined as:
\begin{align*}
    \alpha^{(0)}_n&=\mathscr F_n^{(0)}(\alpha^{(p)}_n,\alpha^{(p-1)}_n,\dots,\alpha^{(1)}_n,\alpha^{(p)}_{n-1},\dots),\\
    \alpha^{(m)}_n&=\mathscr F_n^{(m)}(\alpha^{(p)}_n,\dots,\alpha^{(m+1)}_n,\alpha^{(p)}_{n-1},\dots,\alpha^{(0)}_{n-1},\dots),\\
    \alpha^{(p)}_n&=\mathscr F_n^{(p)}(\alpha^{(p)}_{n-1},\dots,\alpha^{(0)}_{n-1},\alpha^{(p)}_{n-2},\dots),
\end{align*}
with $m\in\{1,\dots,p-1\}.$
\subsection{Multiple Charlier case}
In this scenario, the weights are given by \(w^{(a)}(k) = \frac{(\eta^{(a)})^k}{k!}\) with \(a\in \{1,\dots,p\}\), and the polynomials of the Pearson equations are \(\theta = k\) and \(\sigma^{(a)}(k) = \eta^{(a)}\), where \(a \in \{1,\dots,p\}\).

Since  \(\deg \theta=1\), we have one non-zero superdiagonal and zero non-null subdiagonal. This results in the following structure for the Laguerre--Freud matrix:
\begin{equation*}
    \Psi=\psi^{(0)}+\psi^{(1)}\Lambda.
\end{equation*}
By applying \(\Psi = \Pi^{-}\theta(T)\), we can obtain:
\[\begin{aligned}
	\psi^{(0)} &= \alpha^{(0)} - \mathfrak{a}_{+}D, & \psi^{(1)} &= I;
\end{aligned}\]
and using \(\Psi = \sum_{a=1}^p \eta^{(a)} (\Pi^{(a)+})^\top\), we have:
\[\begin{aligned}
	\psi^{(0)} &= \sum_{a=1}^p \eta^{(a)} I^{(a)}, & \psi^{(1)} &= \sum_{a=1}^p \eta^{(a)} H \mathfrak{a}_{-} H^{-1} (I^{(a)} - I^{(a-1)}) \tilde{S}^{[1]}.
\end{aligned}\]
By equating both expressions for \(\psi^{(0)}\), we can derive that:
\[
\begin{aligned}
	\alpha^{(0)}_a &= a + \eta^{(a+1)}, & a&\in \{0,1,\dots,p-1\}.
\end{aligned}
\]
If we calculate the compatibility $[\Psi,T]=\Psi$ we obtain the following non-trivial equations:
\begin{align*}
    \mathfrak a_{-}^{p}\alpha^{(0)}-\alpha^{(0)}&=pI,\\
    \alpha^{(a+1)}-\mathfrak a_{+}\alpha^{(a+1)}&=\alpha^{(a)}(\alpha^{(0)}-\mathfrak a_{-}^{a}\alpha^{(0)}+aI),\\
    \alpha^{(1)}-\mathfrak a_{+}\alpha^{(1)}&=\sum_{a=1}^p\eta^{(a)}I^{(a)},\\
    \mathfrak a_{-}\alpha^{(0)}-\alpha^{(0)}&=\sum^p_{a=1}\eta^{(a)}(\mathfrak a_{-}I^{(a)}-I^{(a)})+I,
\end{align*}
where $a\in\{1,\dots,p-1\}.$
So we can write the coefficients as
\[\begin{aligned}
    \alpha_{pm+a}^{(0)}&=pm+a+\eta^{(a+1)},   &a&\in\{0,\dots,p-1\},\\
    \alpha_{pm+a}^{(1)}&=\sum_{b=1}^a\eta^{(b)}+m\sum^p_{b=1}\eta^{(b)},& a&\in\{1,2,\dots,p-1\},\\
    \alpha_{pm+a}^{(l+1)}&=\alpha_{pm+a-1}^{(l+1)}+\alpha_{pm+a-1}^{(l)}(\eta^{(a-l)}-\eta^{(a)}),&  a&\in
    \{l+1,\dots,p-1\},
\end{aligned}\]
where $m\in\N_0$.
\subsection{Multiple generalized Charlier case}
In this case, the weights are given by
\begin{equation}\label{eq:generalized Charlier}
	w^{(a)}(k) = \frac{1}{(c+1)_k} \frac{(\eta^{(a)})^k}{k!}, 
\end{equation}
and the polynomials from the Pearson equations are
\[\begin{aligned}
	 \theta(k) &= k(k+c), & \sigma^{(a)}(k) &= \eta^{(a)}. 
\end{aligned}\]

Due to the degrees of these polynomials, the Laguerre--Freud matrix \(\Psi\) has the form:
\[ \Psi = \psi^{(0)} + \psi^{(1)}\Lambda + \psi^{(2)}\Lambda^2. \]
Now, using the relation \(\Psi = \Pi^{-}\theta(T)\), we find:
\[
\psi^{(1)} = \alpha^{(0)} + \mathfrak{a}_{-}\alpha^{(0)} + c - \mathfrak{a}_{+}D, \quad \psi^{(2)} = I,
\]
where \(I\) is the identity matrix.

From the equation \(\Psi = \sum_{a=1}^{p} \eta^{(a)} (\pi^{(a)+})^\top\), it follows that:
\[ \psi^{(0)} = \sum_{a=1}^{p} \eta^{(a)} I^{(a)}. \]
From compatibility $[\Psi,T]=\Psi$ we can obtain 

\begin{pro}
	The following Laguerre--Freud equations hold for the generalized multiple Charlier weights \eqref{eq:generalized Charlier}
\begin{align*}
    \alpha^{(0)}_{m+p}&=m+p-1-c-\alpha_{m+p-1}^{(0)}+\frac{\alpha^{(p)}_{m+p-1}}{\alpha^{(p)}_{m+p}}(\alpha_{m-1}^{(0)}+\alpha^{(0)}_{m}+c-m)+\frac{\alpha^{(p-1)}_{m+p-1}}{\alpha^{(p)}_{m+p}}(r(m+1)-r(m)),\\
    \alpha^{(1)}_{m+2}&=\alpha^{(1)}_m+\alpha^{(0)}_{m+1}+\alpha^{(0)}_m+c-m+(\alpha^{(0)}_m-\alpha^{(0)}_{m+1})(\alpha^{(0)}_m+\alpha^{(0)}_{m+1}+c-m)+r(m+2)-r(m+1),\\
    \alpha^{(2)}_{m+2}&=\alpha^{(2)}_m+\alpha^{(1)}_m(\alpha^{(0)}_{m}+\alpha^{(0)}_{m-1}+c-m+1)-\alpha^{(1)}_{m+1}(\alpha^{(0)}_m+\alpha^{(0)}_{m+1}+c-m)+r(m+1),\\
    \alpha^{(n+2)}_{n+2+m}&=\begin{multlined}[t][.8\textwidth]
\alpha^{(n+2)}_{n+m}-\alpha^{(n+1)}_{m+n+1}(\alpha^{(0)}_{a+m}+\alpha^{(0)}_{a+m+1}+c-(a+m))+\alpha^{(a+1)}_{a+m}(\alpha^{(0)}_{m-1}+\alpha^{(0)}_m+c-(m-1))\\+\alpha^{(n)}_{m+a}(r(m+1)-r(a+m+1)),
    \end{multlined}
\end{align*}
where $a\in\{1,\dots,p-2\},$ and $r(n)=n-p\big\lfloor\frac{n}{p}\big\rfloor, \ r(n)\in\{1,\dots,p\}.$ Here $\lfloor x\big\rfloor,$ is the largest integer that is equal or smaller than $x$.
\end{pro}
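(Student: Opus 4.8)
The plan is to exploit the compatibility identity $[\Psi,T]=\Psi$ together with the two available descriptions of the Laguerre--Freud matrix. Since $\deg\theta=2$ and $\max_a\deg\sigma_a=0$, the matrix $\Psi$ carries only the three diagonals already computed, namely $\psi^{(2)}=I$ and $\psi^{(1)}=\alpha^{(0)}+\mathfrak{a}_{-}\alpha^{(0)}+c-\mathfrak{a}_{+}D$ coming from $\Psi=\Pi^-\theta(T)$, and $\psi^{(0)}=\sum_{a=1}^p\eta^{(a)}I^{(a)}$ coming from $\Psi=\sum_{a=1}^p\eta^{(a)}(\Pi^{(a)+})^\top$. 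Thus $\Psi$ is fully expressed through the diagonal data $\alpha^{(0)}$ and the constants, and the task reduces to extracting the scalar content of $[\Psi,T]=\Psi$.

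First I would substitute $\Psi=\psi^{(0)}+\psi^{(1)}\Lambda+\Lambda^2$ and the banded Hessenberg form $T=\Lambda+\alpha^{(0)}+\Lambda^\top\alpha^{(1)}+\dots+(\Lambda^\top)^p\alpha^{(p)}$ into the commutator. To carry out the products I would systematically move every $\Lambda$ and $\Lambda^\top$ past the intervening diagonal matrices using $\Lambda D=(\mathfrak{a}_{-}D)\Lambda$, $(\Lambda^\top) D=(\mathfrak{a}_{+}D)\Lambda^\top$ and $\Lambda\Lambda^\top=I$, collecting $\Psi T-T\Psi$ as a banded matrix whose diagonals run from the $(\Lambda^\top)^p$-level up to the $\Lambda^3$-level.

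The heart of the argument is then reading $[\Psi,T]=\Psi$ diagonal by diagonal. The top ($\Lambda^3$) diagonal cancels automatically because $\psi^{(2)}=I$ commutes with $\Lambda$; the diagonals below $\psi^{(0)}$, that is the $(\Lambda^\top)^k$-levels for $k=1,\dots,p$, must vanish since $\Psi$ has no subdiagonals, and these vanishing conditions supply the recursions for $\alpha^{(p)}_n$ and for the intermediate coefficients $\alpha^{(a+2)}_n$; the three diagonals $0,+1,+2$ must reproduce $\psi^{(0)},\psi^{(1)},\psi^{(2)}$ and yield the remaining relations for $\alpha^{(0)}$, $\alpha^{(1)}$ and $\alpha^{(2)}$. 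Passing to scalar components indexed by $n$, solving each diagonal identity for the coefficient of largest index, and absorbing the block-periodic dependence of $\psi^{(0)}=\sum_{a=1}^p\eta^{(a)}I^{(a)}$ into the residue $r(n)=n-p\lfloor n/p\rfloor$, which records for each row the active weight label modulo $p$, produces the four displayed families of equations.

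The main obstacle will be bookkeeping rather than conceptual: correctly propagating the shift operators $\mathfrak{a}_{\pm}$ through the successive products and aligning the index shifts so that, within each diagonal, the contributions of $\alpha^{(0)}$, the $\alpha^{(a)}$ and the projectors $I^{(a)}$ land on matching rows. In particular the fact that consecutive rows of $\psi^{(0)}$ select different $\eta^{(a)}$ is exactly what forces the $r(m+1)-r(m+n+1)$ type differences in the final formulas; verifying these telescoping patterns for general $p$, rather than only for the illustrative $p=3$ case, is where the computation demands the most care.
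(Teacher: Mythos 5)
Your plan follows exactly the paper's own route: the paper likewise fixes $\psi^{(2)}=I$ and $\psi^{(1)}=\alpha^{(0)}+\mathfrak{a}_{-}\alpha^{(0)}+c-\mathfrak{a}_{+}D$ from $\Psi=\Pi^{-}\theta(T)$, gets $\psi^{(0)}=\sum_{a=1}^{p}\eta^{(a)}I^{(a)}$ from $\Psi=\sum_{a=1}^{p}\eta^{(a)}(\Pi^{(a)+})^{\top}$, and then derives the stated equations by reading the compatibility $[\Psi,T]=\Psi$ diagonal by diagonal. Your proposal is correct and, if anything, spells out more of the banded bookkeeping and the block-periodic origin of the residue terms $r(n)$ than the paper's very terse proof does.
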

\subsection{Multiple Meixner II case}
For this case, the weights are given by
\[ w^{(a)}(k) = (b_a)_k \frac{\eta^k}{k!}, \]
and the polynomials from the Pearson equations are
\[ \begin{aligned}
	\theta(k)& = k , &\sigma^{(a)}(k) &= \eta(k + b_a), 
\end{aligned}\]
where \( a \in \{1, \dots, p\} \).

Considering the degrees of these polynomials, the Laguerre--Freud matrix has the structure:
\[ \Psi = (\Lambda^\top)^p \psi^{(-p)} + \cdots + \psi^{(1)} \Lambda. \]
By applying \(\Psi = \Pi^{-}\theta(T)\), we obtain:
\[
\psi^{(1)} = I, \quad \psi^{(0)} = \alpha^{(0)} - \mathfrak{a}_{+} D.
\]
Using the relation \(\Psi = \sum_{a=1}^{p} \sigma_a(T^{(a)})^\top (\pi^{(a)+})^\top\), we find:
\[
\psi^{(0)} = \eta \left( \alpha^{(0)} + \sum_{a=1}^{p} I^{(a)} (b_a + (\mathfrak{a}_{+})^p D^{(a)}) \right), \quad \psi^{(-m)} = \eta \alpha^{(m)}, \quad m \in \{1, \dots, p\}.
\]

From the compatibility condition \([\Psi, T] = \Psi\), we obtain the following non-trivial equations for the coefficient matrices:
\begin{align*}
	\mathfrak{a}_{-} \alpha^{(0)} - \alpha^{(0)} &= \frac{1}{1-\eta} \left( I + \sum_{a=1}^{p} \eta \left( I^{(a+1)} (b_a + \mathfrak{a}_{+}^{p-1} D^{(a)}) - I^{(a)} (b_a + \mathfrak{a}_{+}^p D^{(a)}) \right) \right), \\
	\alpha^{(1)} - \mathfrak{a}_{+} \alpha^{(1)} &= \frac{1}{1-\eta} (\alpha^{(0)} - \mathfrak{a}_{+} D), \\
	\alpha^{(l+1)} - \mathfrak{a}_{+} \alpha^{(l+1)} &= \frac{\eta}{1-\eta} \left( \alpha^{(l)} \left( I + \sum_{a=1}^{p} I^{(a)} (b_a + \mathfrak{a}_{+}^p D^{(a)}) \right) - \sum_{a=1}^{p} I^{(a+l)} (b_a + \mathfrak{a}_{+}^{p-l} D^{(a)}) \right),
\end{align*}
where \( l \in \{1, \dots, p-1\} \).

Equating both expressions for $\psi^{(0)}$ we can obtain $\alpha^{(0)}$ entrywise:
\begin{equation*}
    \alpha^{(0)}_{pa+m}=\frac{1}{1-\eta}(pa+m+\eta(b_{m+1}+a))
\end{equation*}
Departing from compatibility equations and using the expression for the $\alpha^{(0)}$ elements we can obtain:
\begin{align*}
    \alpha^{(1)}_{pa+m}&=\frac{1}{1-\eta}\left(\frac{1}{1-\eta}\left((p+\eta)\frac{a(a+1)}{2}+\frac{m(m+1)}{2}+\eta\sum_{j=0}^mb_{j+1}\right)-\frac{(pa+m-1)(pa+m)}{2}\right), \\
    \alpha^{(l+1)}_{pa+m+l+1}&=\alpha^{(l+1)}_{pa+m+l}+\frac{\eta}{1-\eta}\alpha^{(l)}_{pa+m+l}(1+b_{m+1}+a).
\end{align*}

\subsection{Multiple generalized Meixner II case}
In this case, the weights are given by:
 \begin{equation}\label{eq:generalized Meixner II}
	w^{(a)}(k) = \frac{(b_a)_k}{(c+1)_k} \frac{\eta^k}{k!}, 
\end{equation}
and the polynomials from the Pearson equations are:
\[ \theta(k) = k(k+c) \quad \text{and} \quad \sigma^{(a)}(k) = \eta(k + b_a), \]
where \(a \in \{1, \dots, p\}\).

To find the Laguerre--Freud equation we proceed as follows. Considering the degrees of the  polynomials defining the Pearson equations, the Laguerre--Freud matrix has the structure:
\[ \Psi = (\Lambda^\top)^p \psi^{(-p)} + \cdots + \psi^{(2)} \Lambda^2. \]
Applying \(\Psi = \Pi^{-} \theta(T)\), we obtain:
\[
\psi^{(2)} = I, \quad \psi^{(1)} = \mathfrak{a}_{-} \alpha^{(0)} + \alpha^{(0)} + c - \mathfrak{a}_{+} D.
\]
Using the relation \(\Psi = \sum_{a=1}^{p} \sigma_a(T^{(a)})^\top (\pi^{(a)+})^\top\), we find:
\[
\psi^{(0)} = \eta \left( \alpha^{(0)} + \sum_{a=1}^{p} I^{(a)} (b_a + \mathfrak{a}_{+}^p D^{(a)}) \right), \quad \psi^{(-m)} = \eta \alpha^{(m)}, \quad m \in \{1, \dots, p\}.
\]

From the compatibility condition \([\Psi, T] = \Psi\), we find
\begin{pro} The following Laguerre--Freud equations are satisfied for the generalized multiple Meixner weights  given in \eqref{eq:generalized Meixner II}
	\begin{align*}
	\alpha^{(0)}_{(a+1)p+m}&=\begin{multlined}[t][.8\textwidth]-\alpha^{(0)}_{(a+1)p+m-1}+a+p-1-c+(\alpha^{(p)}_{(a+1)p+m})^{-1}(\eta(\alpha^{(p-1)}_{(a+1)p+m-1}(b_{m+1}+a+1)+\alpha^{(p)}_{(a+1)p+m}\\-\alpha^{(p)}_{(a+1)p+m-1})+\alpha^{(p)}_{(a+1)p+m-1}(\alpha^{(0)}_{pa+m}+\alpha^{(0)}_{pa+m-1}+c-(a-1))),
\end{multlined}\\
	\alpha^{(1)}_{pa+m+2}&=\begin{multlined}[t][.8\textwidth]\alpha^{(1)}_{pa+m}+\eta(\alpha^{(0)}_{pa+m+1}-\alpha^{(0)}_{pa+m}-b_m-a)+(\alpha^{(0)}_{pa+m}-\alpha^{(0)}_{pa+m+1}+1)(\alpha^{(0)}_{pa+m+1}+\alpha^{(0)}_{pa+m}+c-(m+pa)),   
\end{multlined}\\
	\alpha^{(l+2)}_{pa+m+l+2}&=\begin{multlined}[t][.8\textwidth]\alpha^{(l+2)}_{pa+m+l}+\eta(\alpha^{(l+1)}_{pa+m+l+1}-\alpha^{(l+1)}_{pa+m+l})-\alpha^{(l+1)}_{pa+m+l+1}(\alpha^{(0)}_{pa+m+l+1}+\alpha^{(0)}_{pa+m+l}+c-(a+l))\\+\alpha^{(l+1)}_{pa+m+l}(\alpha^{(0)}_{pa+m}+\alpha^{(0)}_{pa+m-1}+c-(a-1)),
\end{multlined}\\
	\alpha^{(l+2)}_{pa+m+l+2}&=\begin{multlined}[t][.8\textwidth]\alpha^{(l+2)}_{pa+m+l}+\eta(\alpha^{(l)}_{pa+m+l}(b_{m+1}+a+1)+\alpha^{(l+1)}_{pa+m+l+1}-\alpha^{(l+1)}_{pa+m+l})+\alpha^{(l+1)}_{pa+m+l+1}(\alpha^{(0)}_{pa+m+l+1}+c-(a+l)\\+\alpha^{(0)}_{pa+m+l})+\alpha^{(l+1)}_{pa+m+l}(\alpha^{(0)}_{pa+m}+\alpha^{(0)}_{pa+m-1}+c-(a-1)).
\end{multlined}
\end{align*}
\end{pro}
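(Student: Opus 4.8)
The plan is to read the compatibility identity $[\Psi,T]=\Psi$, already established above, as a semi-infinite banded matrix equation and to extract the stated relations by matching it diagonal by diagonal. The two presentations of $\Psi$ displayed just before the statement supply all the diagonal data: from $\Psi=\Pi^-\theta(T)$ one has $\psi^{(2)}=I$ and $\psi^{(1)}=\mathfrak a_-\alpha^{(0)}+\alpha^{(0)}+c-\mathfrak a_+D$, while from $\Psi=\sum_{a=1}^p\sigma_a(T^{(a)})^\top(\Pi^{(a)+})^\top$ one has the explicit $\psi^{(0)}=\eta\bigl(\alpha^{(0)}+\sum_{a=1}^pI^{(a)}(b_a+\mathfrak a_+^pD^{(a)})\bigr)$ together with $\psi^{(-m)}=\eta\alpha^{(m)}$ for $m\in\{1,\dots,p\}$. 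The overall scheme is identical to the one carried out for the multiple and generalized Charlier families, so the work is in organizing the commutator rather than in devising a new idea.

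First I would insert the banded expansions $T=\Lambda+\alpha^{(0)}+\Lambda^\top\alpha^{(1)}+\dots+(\Lambda^\top)^p\alpha^{(p)}$ and $\Psi=(\Lambda^\top)^p\psi^{(-p)}+\dots+\psi^{(2)}\Lambda^2$ into $\Psi T-T\Psi$. Since both factors are banded, the commutator is banded, with diagonal indices running from the second superdiagonal down to the $(2p)$-th subdiagonal; because $\Psi$ itself only occupies the band from the second superdiagonal to the $p$-th subdiagonal, matching diagonals produces one entrywise equation on each surviving diagonal together with vanishing conditions on the extreme diagonals. The key computational rule is that a product $(\Lambda^\top)^i\psi\,(\Lambda^\top)^j\alpha$ lands on the $(i+j)$-th subdiagonal with its diagonal factors displaced by the appropriate powers of $\mathfrak a_\pm$; converting these abstract shifts into the entry labels is precisely what generates the subscript translations $pa+m\mapsto pa+m+l+2$ and the like that appear in the statement.

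The four displayed families then correspond to reading the identity on successive diagonals. The $\alpha^{(0)}$ equation comes from the topmost diagonals (carrying $\psi^{(2)}=I$ and $\psi^{(1)}$); the $\alpha^{(1)}$ equation from the next one; and the general $\alpha^{(l+2)}$ relation from the middle subdiagonals. The two versions of the $\alpha^{(l+2)}$ equation reflect the two available expressions for the same entries of $\Psi$: one form is obtained by feeding in $\psi^{(1)}$ (hence the pieces $\alpha^{(0)}+\alpha^{(0)}+c$ with the $c$-shifts), the other by feeding in $\psi^{(0)}$ and $\psi^{(-m)}=\eta\alpha^{(m)}$ (hence the pieces $\eta(b_{m+1}+a+1)$ carrying the $b_a$-dependence). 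Matching the \emph{highest} superdiagonal of the commutator against $\Psi$, combined with equating the two expressions for $\psi^{(0)}$, also pins down $\alpha^{(0)}$ entrywise, which is needed to close the recursions.

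The main obstacle will be bookkeeping rather than conceptual: one must align the powers of $\Lambda$ and $\Lambda^\top$ with their accompanying $\mathfrak a_\pm$-shifts so that the indices $pa+m+l$ land correctly, and must correctly propagate the inhomogeneous pieces $b_a+\mathfrak a_+^pD^{(a)}$ and $c-\mathfrak a_+D$ that encode the $b_a$- and $c$-dependence. A secondary point requiring care is verifying that the diagonals of $[\Psi,T]$ lying strictly below the $p$-th subdiagonal (which have no counterpart in $\Psi$) vanish identically, so that they impose no extra constraint; this should follow from $\psi^{(-p)}=\eta\alpha^{(p)}$ together with the closed form $\alpha^{(p)}=(\mathfrak a_-^pH)H^{-1}$, consistent with the banded structure asserted for $\Psi$.
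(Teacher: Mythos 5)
Your proposal follows essentially the same route as the paper: the paper also obtains these equations by feeding the banded expansion of $T$ and the two presentations of $\Psi$ (with $\psi^{(2)}=I$, $\psi^{(1)}$ coming from $\Pi^-\theta(T)$, and $\psi^{(0)}$, $\psi^{(-m)}=\eta\alpha^{(m)}$ coming from $\sum_{a=1}^p\sigma_a(T^{(a)})^\top(\Pi^{(a)+})^\top$) into the compatibility condition $[\Psi,T]=\Psi$ and matching it diagonal by diagonal, which is exactly your plan. The paper's recorded proof is in fact just this one-line appeal to the compatibility condition, so your diagonal-by-diagonal bookkeeping scheme is, if anything, more explicit than what the paper itself writes down.
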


\section*{Conclusion and Outlook}
In this paper, we continue our study of semiclassical multiple discrete orthogonal polynomials, now considering an arbitrary number of weights. We have derived multiple versions of the Toda equations and the Laguerre--Freud equations for the multiple generalized Charlier and multiple generalized Meixner II families.

The investigation of mixed multiple discrete orthogonal polynomials, along with the corresponding Toda and Laguerre-Freud equations, presents an intriguing area of research. Additionally, the exploration of Pearson equations in the multiple scenario for the unit circle appears to be both unexplored and promising.

\section*{Acknowledgments}
The authors  acknowledge research project [PID2021- 122154NB-I00], \emph{Ortogonalidad y Aproximación con Aplicaciones en Machine Learning y Teoría de la Probabilidad}  funded  by \href{https://doi.org/10.13039/501100011033}{MICIU/AEI/10.13039/501100011033} and by "ERDF A Way of making Europe”.

\section*{Declarations}

\begin{enumerate}
	\item \textbf{Conflict of interest:} The authors declare no conflict of interest.
	\item \textbf{Ethical approval:} Not applicable.
	\item \textbf{Contributions:} All the authors have contribute equally.
	\item \textbf{Data availability:} This paper has no associated data.
\end{enumerate}


\begin{thebibliography}{x}
	
%
%
%
	
	\bibitem{Adler} Vladimir E. Adler, Alexander I. Bobenko, and  Yuri B. Suris, \emph{Classification of integrable discrete equations of octahedron type}, International  Mathematical Research Notices  \textbf{2012} (2012) 1822-1889.
	
	\bibitem{afm}
	Carlos Álvarez-Fernández, Ulises Fidalgo, and Manuel Mañas,
	\emph{Multiple orthogonal polynomials of mixed type: Gauss-Borel factorization and the multi-component 2D Toda hierarchy},
	Advances in Mathematics~\textbf{227} (2011) 1451–1525.
	
	\bibitem{LibrodeHypergeom}
	George E. Andrews, Roger Askey, and Ranjan Roy,
	\textit{Special functions},
	Cambridge University Press, Cambridge, 1999.
	
	
%
%
%
%
%
%
%
%
%
	
	
	\bibitem{Arvesu} Jorge Arvesú, Jonnathan Coussement, and  Walter Van Assche, \emph{Some discrete multiple orthogonal polynomials},
	Journal of Computational and Applied Mathematics \textbf{153} (2003) 19-45.%
	
	\bibitem{Beals} Ricard Beals and Roderick Wong, \emph{Special Functions and Orthogonal Polynomials}, Cambridge studies in advances mathematics \textbf{153}, Cambridge, 2016.%
	
	
	\bibitem{AskeyII}
	Bernhard Beckermann, Jonathan Coussement and 
	Walter Van Assche, \emph{Multiple Wilson and Jacobi--Piñeiro polynomials}
	Journal of Approximation Theory \textbf{132} (2005) 155-181.
	
	\bibitem{bertola} Marco Bertola, Bertrand Eynard and John Harnad, \emph{Semiclassical orthogonal polynomials, matrix models and isomonodromic tau functions}, 
	Communications in  Mathematical  Physics \textbf{263} (2006) 401-437. 
	
	\bibitem{bfm}
	Amílcar Branquinho, Ana Foulquié-Moreno, and Manuel Mañas, 
	\emph{Multiple orthogonal polynomials: Pearson equations and Christoffel formulas},
	Analysis and Mathematical Physics \textbf{12}:6 (2022)
	1–59.
	
	\bibitem{bfm2}
	Amílcar Branquinho, Ana Foulquié-Moreno, and Manuel Mañas, 
	\emph{Bidiagonal factorization of tetradiagonal matrices and Darboux transformations},
	Analysis and Mathematical Physics \textbf{13}:42 (2023)	1–59.
	
	
	
	
	

	
	\bibitem{BDFM}
	Amílcar Branquinho, Juan E. F. Díaz, Ana Foulquié-Moreno, and Manuel Mañas, \emph{Hahn multiple orthogonal polynomials of type I: hypergeometric expressions}, Journal of Mathematical Analysis and Applications \textbf{528} (2023) 1277471.
	
			\bibitem{BDFM1} Amílcar Branquinho, Juan EF Díaz, Ana Foulquié-Moreno, and Manuel Mañas, \emph{Bidiagonal factorization of the recurrence matrix for the multiple Hahn orthogonal polynomials,} Linear Algebra and its Applications (2024). doi: \hyperref{https://doi.org/10.1016/j.laa.2024.03.033}{}{}{10.1016/j.laa.2024.03.033}.
	
			\bibitem{BDFM2}	Amílcar Branquinho, Juan EF Díaz, Ana Foulquié-Moreno, and Manuel Mañas, \emph{Hypergeometric Expressions for Type I Jacobi-Piñeiro Orthogonal Polynomials with Arbitrary Number of Weights}, Proceedings of the American Mathematical Society, Series B, \textbf{11} (2024)  200–210. 
	
	\bibitem{BDFM3}	Amílcar Branquinho, Juan EF Díaz, Ana Foulquié-Moreno, and Manuel Mañas, \emph{Classical multiple orthogonal polynomials for arbitrary number of weights and their explicit representation,} 2024.
		\hyperref{https://arxiv.org/abs/2404.13958}{}{}{\texttt{arXiv:2404.13958}}.
	
	\bibitem{Hipergeometricos}
	Amílcar Branquinho, Juan E. Fernández-Díaz, Ana Foulquié-Moreno, and Manuel Mañas,
	\emph{Hypergeometric Multiple Orthogonal Polynomials and Random Walks}, (2021)
	\hyperref{https://arxiv.org/abs/2107.00770}{}{}{\texttt{arXiv:2107.00770}}.
	
	
	\bibitem{Chihara} Theodore S Chihara, \emph{An Introduction to Orthogonal Polynomials}, Gordon and Breach, New York , 1978. Reprinted by Dover, New York, 2011.
	
	
	\bibitem{clarkson} Peter A. Clarkson, \emph{Recurrence coefficients for discrete orthonormal polynomials and the Painlevé equations}, Journal of  Physics A: Mathematical \&  Theoretical \textbf{46} (2013) 185205.
	
	\bibitem{clarkson2} Peter A. Clarkson, \emph{Classical solutions of the degenerate fifth Painlevé equation}, Journal of  Physics A: Mathematical \&  Theoretical \textbf{56} (2023) 134002.
	

	
	
	\bibitem{Coussment} Jonnathan Coussement and  Walter Van Assche,  \emph{Some classical multiple orthogonal polynomials}, Journal of Computational and Applied Mathematics \textbf{1} (2001) 317-347.
	
	
		\bibitem{diego}Diego Dominici, \emph{Laguerre–Freud equations for generalized Hahn polynomials of type I}, Journal of Difference Equations and Applications \textbf{24} (2018) 916–940.
	
	\bibitem{diego1} Diego Dominici, \emph{Matrix factorizations and orthogonal polynomials}, Random Matrices Theory Applications \textbf{9} (2020) 2040003, 33 pp.
	
	
	\bibitem{diego_paco} Diego Dominici and Francisco Marcellán, \emph{Discrete semiclassical orthogonal polynomials of class one}, Pacific Journal of Mathematics \textbf{268} n\textsuperscript{o}2 (2012) 389-411.
	
	\bibitem{diego_paco1}  Diego Dominici and Francisco Marcellán, \emph{Discrete semiclassical orthogonal polynomials of class 2} in 
	\emph{Orthogonal Polynomials: Current Trends and Applications}, edited by E. Huertas and F. Marcellán,  SEMA SIMAI Springer Series, \textbf{22} (2021) 103-169, Springer.
	
	
	
	\bibitem{Fernandez-Irrisarri_Manas_2}  Itsaso Fernández-Irrisarri and Manuel Mañas, \emph{Laguerre--Freud Equations for three families of hypergeometric discrete orthogonal polynomials}, Studies in  Applied Mathematics  \textbf{151} (2023) 509-535. 
	
	\bibitem{Fernandez-Irrisarri_Manas_1.0} 	Itsaso Fernández-Irisarri and Manuel Mañas, \emph{Laguerre--Freud equations for the Gauss hypergeometric discrete orthogonal polynomials}, Mathematics \textbf{11} (2023) 4866.
	
	
	\bibitem{Fernandez-Irrisarri_Manas_1.1} Itsaso Fernández-Irisarri and Manuel Mañas, \emph{Matrix Factorizations for the Generalized Charlier and  Meixner Orthogonal Polynomials}, Linear Algebra and its Applications (2024)
	\hyperref{https://doi.org/10.1016/j.laa.2024.01.012}{}{}{DOI:10.1016/j.laa.2024.01.012}.
	
	
		\bibitem{prior} 	Itsaso Fernández-Irisarri and   Manuel Mañas, \emph{Toda and Laguerre–Freud equations and tau functions for hypergeometric discrete multiple orthogonal polynomials}, Analysis and Mathematical Physics \textbf{14} (2024) article 30.
	
	\bibitem{filipuk_vanassche0} Galina Filipuk and Walter Van Assche,
	\emph{Recurrence coefficients of generalized Charlier polynomials and the fifth Painlevé equation},  Proceedings of  American  Mathematical  Society  \textbf{141}  (2013) 551–62.
	
	\bibitem{filipuk_vanassche1} Galina Filipuk and Walter Van Assche,
	\emph{Recurrence Coefficients of a New Generalization
		of the Meixner Polynomials}, Symmetry, Integrability and Geometry: Methods and Applications (SIGMA) \textbf{7} (2011), 068, 11 pages.
	
	\bibitem{filipuk_vanassche2} Galina Filipuk and Walter Van Assche,
	\emph{ Discrete Orthogonal Polynomials with Hypergeometric Weights and Painlevé VI}, Symmetry, Integrability and Geometry: Methods and Applications (SIGMA) \textbf{14} (2018), 088, 19 pages.
	
%
	
	\bibitem{freud} Géza Freud. \emph{On the coefficients in the recursion formulae of orthogonal polynomials}, Proceedings of the  Royal Irish Academy Section A \textbf{76} n\textsuperscript{o}1 (1976) 1-6.
	
	\bibitem{harnad} John Harnad and Ferenc Balogh, \emph{Tau Functions and their Applications}, Cambridge Monographs in Mathematical Physics,
	Cambridge University Press, Cambridge, 2021.
	
	
	\bibitem{Hietarinta} Jarmo Hietarinta, Nalini Joshi, and Frank W. Nijhoff, \emph{Discrete Systems and Integrability}, Cambridge Texts in Applied Mathematics, Cambridge University Press, Cambridge, 2016.
	
	\bibitem{Ismail}
	Mourad E. H. Ismail,
	\textit{Classical and Quantum Orthogonal Polynomials in One Variable},
	Cambridge University Press, Cambdridge, 2005.
	
	
	\bibitem{Koekoek} Roelof Koekoek, Peter A. Lesky, and René F. Swarttouw, \emph{Hypergeometric Orthogonal Polynomials and their q-Analogues}, Sprimger Verlag, Berlin, 2010.
	
	\bibitem{laguerre} Edmond Laguerre, \emph{Sur la réduction en fractions continues d'une fraction qui satisfait à une  équation différentielle linéaire du premier ordre dont les coefficients sont rationnels. } Journal de Mathématiques Pures et Appliquées  4\textsuperscript{e} série, tome \textbf{1} (1885) 135–165 .
	
	
	\bibitem{magnus} Alphonse P. Magnus, \emph{ A proof of Freud’s conjecture about the orthogonal polynomials related to $|x|\rho\exp(-x^{2m})$, for integer $m$}, in “Orthogonal polynomials and applications (Bar-le-Duc, 1984)”,  Lecture Notes in Mathematics \textbf{1171} 362–372, Springer, Berlin, 1985.
	
	\bibitem{magnus1} Alphonse P. Magnus, \emph{On Freud’s equations for exponential weights}, Journal of Approximation Theory \textbf{46}(1) (1986) 65–99.
	
	\bibitem{magnus2} Alphonse P. Magnus, \emph{Painlevé-type differential equations for the recurrence coefficients of semi-classical orthogonal polynomials},  Journal of Computational and Applied Mathematics \textbf{57}   (1995) 215–237.
	
	\bibitem{magnus3}Alphonse P. Magnus, \emph{Freud’s equations for orthogonal polynomials as discrete Painlevé equations}, in “Symmetries and integrability of difference equations (Canterbury, 1996)”, London Mathematical Society Lecture Note Series \textbf{255} 228–243, Cambridge University Press, Cambridge,  1999.
	
	\bibitem{intro} Manuel Mañas, \emph{Revisiting Biorthogonal Polynomials. An LU factorization discussion} in 
	\emph{Orthogonal Polynomials: Current Trends and Applications}, edited by E. Huertas and F. Marcellán,  SEMA SIMAI Springer Series, \textbf{22} (2021) 273-308, Springer.
	
	
	\bibitem{Manas_Fernandez-Irisarri} {Manuel Mañas, Itsaso Fernández-Irrisarri, and Omar González-Fernández},  \emph{Pearson Equations for  Discrete Orthogonal Polynomials: I. Generalized Hypergeometric Functions and Toda Equations}, Studies in  Applied Mathematics \textbf{148} (2022) 1141-1179.
	
	
	\bibitem{andrei_walter}
	Andrei Martínez-Finkelshtein and Walter Van Assche,
	\emph{What is a multiple orthogonal polynomial?},
	Notices of the AMS \textbf{63}:9 (2016)
	1029–1031. 
	
	
	
	\bibitem{Nikiforov_Suslov_Uvarov}	Arnold F. Nikiforov, Sergei K. Suslov, and Vasilii B. Uvarov,\emph{ Classical Orhogonal Polynomials of a Discrete Variable}, Springer Series in Computational Physics, Springer, Berlin, 1991.
	
	\bibitem{nikishin_sorokin}
	Evgenii M. Nikishin and Vladimir N. Sorokin,
	\emph{Rational Approximations and Orthogonality},
	Translations of Mathematical Monographs \textbf{92},
	American Mathematical Society, Providence, 1991.
	
	
	\bibitem{Pearson} Karl Pearson, \emph{Contributions to the mathematical theory of evolution.} Philosophical Transactions of the Royal Society of London, Ser. A \textbf{186} (1895 )343–414.    
	
	
	\bibitem{smet_vanassche} Christophe Smet and Walter Van Assche, \emph{Orthogonal polynomials on a bi-lattice}, Constructive Approximation \textbf{36} (2012) 215–242.
	
	\bibitem{Van Assche} Walter Van Assche,  \emph{Orthogonal Polynomials and Painlevé Equations}, Australian Mathematical Society Series \textbf{27}, Cambridge University Press, Cambridge, 2018.
	
\end{thebibliography}
\end{document}